\documentclass[ 11pt]{article}

\usepackage[utf8]{inputenc}  
\usepackage[T1]{fontenc}         
\usepackage{geometry}         
\usepackage[english]{babel}  
\usepackage{cite}
\usepackage{amssymb}   
\usepackage{caption}
\usepackage{amsmath} 
\usepackage{amscd}
\usepackage{graphicx, color}    
\usepackage{amsthm}                 
\usepackage{latexsym}     
\usepackage[all]{xy}
\usepackage{float}

\newtheorem{thm}{Theorem}[section]
\newtheorem{cor}[thm]{Corollary}
\newtheorem{prop}[thm]{Proposition}
\newtheorem{lem}[thm]{Lemma}

\theoremstyle{definition}
\newtheorem{definition}[thm]{Definition}

\newtheorem{rmk}[thm]{Remark}
\def\lquotient#1#2{%
\makeatletter
\lower.3ex\hbox{$#1$}\backslash\raise.3ex\hbox{$#2$}%
\makeatother
}															

\def\rquotient#1#2{%
\makeatletter
\raise.3ex\hbox{$#1$}/\lower.3ex\hbox{$#2$}%
\makeatother
}

\newcommand{\cC}{{\mathcal C}}

\newcommand{\cR}{{\mathcal R}}

\newcommand{\cY}{{\mathcal Y}}

\newcommand{\ra}{\rightarrow}

\newcommand{\hra}{\hookrightarrow}

\title{\textbf{Complexes of groups and geometric small cancellation over graphs of groups}}           
\author{Alexandre Martin\footnote{This research is supported by the European Research Council (ERC) grant of G. Arzhantseva, grant agreement n$^o$ 259527.}}
\date{}

\begin{document}
\maketitle

\noindent {\bf Abstract.} We explain and generalise a construction due to Gromov to realise geometric small cancellation groups over graphs of groups as fundamental groups of non-positively curved 2-dimensional complexes of groups. We then give conditions so that the hyperbolicity and some finiteness properties of the small cancellation quotient can be deduced from analogous properties for the local groups of the initial graph of groups. \\

\noindent {\bf Primary MSC.}  20F65. \\
\noindent {\bf Secondary MSC}. 20E08, 57M07.\\

\noindent Alexandre Martin\\
\noindent Fakult\"at f\"ur Mathematik, Universit\"at Wien\\
\noindent   Oskar-Morgenstern-Platz 1, 1180 Wien, Austria.

\noindent E-mail: alexandre.martin@univie.ac.at\\

\section{Introduction and statement of results.}

Small cancellation theory deals with the following problem: given  a group $G$ and a family $(H_i)_{i \in I}$ of subgroups, find conditions under which one understands the quotient $\rquotient{G}{\ll H_i\gg }$ (where $\ll H_i\gg $ denotes the normal closure of the subgroup generated by the $H_i$). 

In classical small cancellation theory, $G$ is a finitely generated free group and each $H_i$ is an infinite cyclic subgroup generated by a cyclically reduced element. Small cancellation conditions essentially ask that the length of a common subword of two relators be short relatively to the length of the relators. Such conditions come in various flavours and the overall theory has been generalised to many settings, such as small cancellation over graphs of groups \cite{LyndonSchupp} or small cancellation over a hyperbolic group \cite{OlshanskiiResidualingHomomorphisms, DelzantSousGroupesDistingues, ChampetierPetiteSimplificationHyperbolique}. In \cite{GromovRandomWalkRandomGroups}, Gromov gave a geometric version of small cancellation, using the language of \textit{rotation families} (see Definition \ref{rotationfamily}). In this case, the group $G$ acts isometrically on a hyperbolic metric space $X$ and each subgroup $H_i$ stabilises a given subspace $Y_i$. Small cancellation conditions in this context ask that the overlap between two such subspaces be small with respect to the injectivity radii of the $H_i$. This point of view was used for instance in \cite{CoulonAsphericity, DelzantGromovCourbureMesoscopique, DahmaniGuirardelOsin}.

Small cancellation theory offers powerful tools to study various classes of groups, and provide many examples of groups with exotic properties \cite{GromovRandomWalkRandomGroups}. \\ 

Some small cancellation conditions have strong geometric consequences. In \cite{GromovCATkappa}, Gromov proved that groups satisfying the geometric small cancellation condition $C''(1/6)$ act properly and cocompactly on a CAT(0) space. The first goal of this article is to detail this construction and to extend it to the case of small cancellation over a graph of groups; in the case of classical small cancellation, this construction was explained by Vinet (unpublished). More precisely, we prove the following: 

\begin{thm}
 Let $G(\Gamma)$ be a graph of groups over a finite simplicial graph $\Gamma$, with fundamental group $G$ and Bass--Serre tree $T$. Let $(A_\xi, H_\xi)_{\xi \in \Xi}$ a rotation family such that:
\begin{itemize}
\item[{\normalfont \mbox{(RF$_1$)}}] each $H_\xi$ is an infinite cyclic subgroup generated by a hyperbolic element with axis $A_\xi$,
\item[{\normalfont \mbox{(RF$_2$)}}] for every $ \xi \in\Xi$, the set of elements  $g \in G$ such that $gA_\xi= A_\xi$ is infinite cyclic. 
\end{itemize}
If $(A_\xi, H_\xi)_{\xi \in \Xi}$ satisfies the geometric small cancellation condition $C''(1/6)$, then the quotient $~\rquotient{G}{\ll H_\xi \gg }$ is the fundamental group of a non-positively curved 2-dimensional complex of groups, the local groups of which are either finite or subgroups of the local groups of $G(\Gamma)$.
\label{mainPS1} 
\end{thm}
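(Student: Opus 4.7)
I would adapt Gromov's coning construction from hyperbolic spaces to the Bass--Serre tree $T$. The plan is to enlarge $T$ to a $G$-equivariant simply-connected 2-dimensional complex $X$ by attaching, for each axis $A_\xi$, a cone $C_\xi$ with a new apex vertex $v_\xi$, then pass to the quotient $\overline{X} := X/N$ (where $N := \ll H_\xi \gg$) and read off the desired complex of groups from the action of $\overline{G} := G/N$ on $\overline{X}$.

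\textbf{Construction of $X$.} Starting from $T$ with its simplicial metric, for each axis $A_\xi$, build $C_\xi$ as a fan of isoceles flat Euclidean triangles, one per edge of $A_\xi$, all sharing the apex $v_\xi$, with consecutive triangles sharing a slant edge; the apex angle is fixed at $\alpha_\xi := 2\pi/\ell_\xi$, where $\ell_\xi$ is the translation length of the generator of $H_\xi$. The metric space $C_\xi$ has infinite total angle at $v_\xi$ (a branched flat cone), and by condition (RF$_2$) the setwise stabiliser $K_\xi$ of $A_\xi$ is infinite cyclic and acts on $C_\xi$ by rotation around $v_\xi$. Extending $G$-equivariantly yields the full complex $X$, which is simply connected as the union of the tree $T$ and the contractible pieces $C_\xi$ glued along the contractible subspaces $A_\xi$.

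\textbf{Quotient and complex of groups.} The generator of $H_\xi$ rotates around $v_\xi$ by the angle $\ell_\xi \alpha_\xi = 2\pi$, which acts nontrivially on the branched cone, and the quotient $C_\xi/H_\xi$ is a flat Euclidean disk of total angle $2\pi$ at the image of $v_\xi$. The space $\overline{X}$ is then simply connected, since the orbifold fundamental group of $X/N$ equals $N$ but is killed by the point stabilisers, which are precisely the conjugates of the $H_\xi$ and together generate $N$. Thus $\overline{X}$ serves as the development of a 2-dimensional complex of groups $\mathcal{Y} = \overline{X}/\overline{G}$, and $\pi_1(\mathcal{Y}) = \overline{G}$. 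At a vertex coming from $T$ the local group is the image in $\overline{G}$ of a vertex stabiliser of $G(\Gamma)$, hence a subgroup of a local group. At $\overline{v_\xi}$ it is $K_\xi/(K_\xi \cap N) = K_\xi/H_\xi$ (where the identification $K_\xi \cap N = H_\xi$ uses the small cancellation hypothesis), which is finite cyclic. Edge and face stabilisers are treated similarly.

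\textbf{Non-positive curvature and main difficulty.} The NPC condition reduces to checking that the link of each vertex of $\overline{X}$ is CAT(1). At $\overline{v_\xi}$ the link is a circle of length exactly $2\pi$ by construction. The crucial case is a vertex $p$ coming from $T$, whose link is a metric graph with vertex set coming from the edges of $T$ at $p$ together with the slant edges of cones through $p$, and with edges of length $(\pi - \alpha_\xi)/2$ coming from the base angles of incident triangles. An essential loop in this link must switch between triangles of distinct cones along shared $T$-edges. The main step, and the main obstacle, is to show that the $C''(1/6)$ hypothesis, which bounds the overlap between two distinct axes by a fraction of their translation lengths, forces any such loop to traverse arcs from at least six triangles, hence to have length at least $2\pi$. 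This is a direct adaptation of Gromov's classical estimate, translating the metric small cancellation data into the combinatorial girth bound required for the link condition.
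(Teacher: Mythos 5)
There is a genuine gap: your construction omits Gromov's \emph{slice identification}, and without it the coned-off complex you build is not non-positively curved. The issue arises exactly in the case the paper singles out: two axes $A_\xi, A_{\xi'}$ can share more than one edge (the $C''(1/6)$ hypothesis only bounds the overlap relative to translation lengths, so overlaps of length $2, 3, \ldots$ are perfectly allowed). If $A_\xi$ and $A_{\xi'}$ share both edges at a tree vertex $p$, then $\mathrm{lk}(p)$ contains a $4$-cycle alternating between the two $T$-edge vertices and the two slant-edge vertices (one for $C_\xi$, one for $C_{\xi'}$), each link edge having length equal to a base angle $(\pi - \alpha_\xi)/2 < \pi/2$; the total length is strictly less than $2\pi$. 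So the link condition fails and the claimed girth bound ("at least six triangles") is simply false for your unmodified coned-off space. The paper fixes precisely this by defining slices $C_{\xi,\xi'} \subset C_\xi$ (determined by the critical angle $\theta_c = \pi/2 - \pi l_{\max}/R_{\min}$) and isometrically identifying $C_{\xi,\xi'}$ with $C_{\xi',\xi}$, which kills the short $4$-cycles; the CAT$(1)$ verification of the resulting links occupies Section \ref{CAT0} and uses Lemma \ref{lemmetechnique1} ($\theta_c > \pi/3$), which is where the $1/6$ really enters. Your estimate cannot be salvaged without this modification.

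Secondarily, the paper deliberately avoids the route you take of passing to $\overline{X} = X/N$ and reading off a complex of groups from the $\overline{G}$-action, because identifying the stabilisers for that quotient action (and proving $\overline{X}$ is simply connected) is nontrivial without already having the CAT$(0)$ structure in hand. Instead, the paper only ever uses the action of $G$ on the (modified) coned-off space together with the actions of the finite cyclic groups $S_\xi/\langle g_\xi\rangle$ on polygons $K_\xi$, amalgamates the three resulting complexes of groups as a tree of complexes of groups (Section \ref{trees}), invokes the Cartan--Hadamard theorem for non-positively curved complexes of groups to get developability, and computes the fundamental group by van Kampen (Theorem \ref{VanKampen}) as the pushout $G *_{S_\xi} (S_\xi/\langle g_\xi\rangle) \cong G/\ll H_\xi \gg$. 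This sidesteps both the orbifold-$\pi_1$ heuristic you invoke and the identification $K_\xi \cap N = H_\xi$, neither of which is straightforward to justify a priori. So even apart from the missing slice identification, your argument for the fundamental group and the structure of local groups would need substantial additional work to be rigorous.
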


In \cite{GromovCATkappa}, Gromov even constructed actions of $C''(1/6)$ small cancellation groups on CAT($\kappa$) spaces for some $\kappa < 0$. While it would be possible to adapt the previous constructions to obtain such actions, we restrict to actions on (piecewise Euclidean) CAT(0) complexes for two reasons. First, the actions considered here are non-proper, so a negatively-curved assumption on the space would not translate immediately to a property of the quotient group (note however that we will consider the hyperbolicity of such a quotient in Section \ref{hyperbolicityquotient}). More importantly, having an action on a piecewise-Euclidean complex (or more generally on a $ M_\kappa$-complex in the sense of Bridson \cite{BridsonPhD}) will be used in Section \ref{hyperbolicityquotient} to study the geometry of the quotient group.

Let us give a few details about the construction. The family of axes $A_\xi$ is used to construct the so-called coned-off space $\widehat{T}$ (see Definition \ref{coneoff}) and the quotient  space $\rquotient{\widehat{T}}{\ll H_\xi \gg }$ comes with an action of $\rquotient{G}{\ll H_\xi\gg }$. Since two axes $A_\xi, A_{\xi'}$ may share more than one edge, the space $\rquotient{\widehat{T}}{\ll H_\xi \gg }$ does not have a CAT(0) geometry in general. Generalising an idea of Gromov \cite{GromovCATkappa}, we want to construct a CAT(0) complex with a cocompact action of $\rquotient{G}{\ll H_\xi\gg }$ by identifying certain portions of $\rquotient{\widehat{T}}{\ll H_\xi \gg }$. Understanding the resulting action, and in particular the various stabilisers, turns out to be a non-trivial problem. To avoid this issue, we use a different approach, using the theory of complexes of group. We start by considering the complex of groups one would except from the action of $\rquotient{G}{\ll H_\xi\gg }$ on the space obtained from $\rquotient{\widehat{T}}{\ll H_\xi \gg }$ after performing Gromov's construction. This complex of groups decomposes as a tree of complexes of groups (see Section \ref{trees} for the definition), where the various pieces are much easier to handle. In particular, our approach allows us to consider only the action of $G$ on $\widehat{T}$, instead of dealing with the action of $\rquotient{G}{\ll H_\xi\gg }$ on $\rquotient{\widehat{T}}{\ll H_\xi \gg }$. Using standard results on complexes of groups, we prove that this complex of groups is indeed non-positively curved and has $\rquotient{G}{\ll H_\xi\gg }$ as fundamental group. Note that this approach provides a purely geometric proof of two facts which are well known for classical small cancellation theory: 

\begin{prop}
Under the assumptions of Theorem \ref{mainPS1}, the quotient map $G \ra \rquotient{G}{\ll H_\xi \gg }$ embeds each local group of $G$. 
\end{prop}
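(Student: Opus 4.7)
The plan is to deduce the proposition as a direct corollary of Theorem~\ref{mainPS1} combined with the developability of non-positively curved complexes of groups. By Theorem~\ref{mainPS1}, the quotient $\rquotient{G}{\ll H_\xi \gg}$ is the fundamental group of a non-positively curved $2$-dimensional complex of groups $\mathcal{G}(Y)$. By the standard developability theorem of Bridson--Haefliger for complexes of groups of non-positive curvature, for every cell $\sigma$ of $Y$ the canonical homomorphism from the local group $\mathcal{G}_\sigma$ to $\pi_1(\mathcal{G}(Y)) = \rquotient{G}{\ll H_\xi \gg}$ is injective. The proof therefore reduces to exhibiting each local group of $G(\Gamma)$ as one of these local groups $\mathcal{G}_\sigma$.

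To identify the local groups of $G(\Gamma)$ inside $\mathcal{G}(Y)$, I would trace through the construction outlined in the introduction. The starting point is the action of $G$ on the coned-off space $\widehat{T}$, whose vertex and edge stabilisers along $T$ coincide with the local groups of $G(\Gamma)$. The complex of groups $\mathcal{G}(Y)$ is defined as the complex of groups one would expect from the induced action of $\rquotient{G}{\ll H_\xi\gg}$ on the space obtained from $\rquotient{\widehat{T}}{\ll H_\xi\gg}$ after Gromov's identifications—crucially, however, its local groups are defined in terms of $G$-stabilisers upstairs, not $\rquotient{G}{\ll H_\xi\gg}$-stabilisers, which is precisely what makes it possible to speak of these local groups without circularity. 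Since Gromov's identifications are performed only along the cones attached to the axes $A_\xi$, each vertex (resp.\ edge) of $T$ that is not a cone apex survives in $Y$ with local group equal to its full $G$-stabiliser, so that every vertex group and edge group of $G(\Gamma)$ appears as some $\mathcal{G}_\sigma$.

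Combining the two steps, for every local group $G_\tau$ of $G(\Gamma)$ the composition
\[
G_\tau \;=\; \mathcal{G}_\sigma \;\hookrightarrow\; \pi_1(\mathcal{G}(Y)) \;=\; \rquotient{G}{\ll H_\xi \gg}
\]
is an embedding, and this composition agrees with the restriction of the quotient map $G \to \rquotient{G}{\ll H_\xi\gg}$ to $G_\tau$ by the way $\mathcal{G}(Y)$ is set up.

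The main obstacle is the bookkeeping in the second step, namely verifying that the local group of $\mathcal{G}(Y)$ attached to (the image of) a vertex $v$ of $T$ really is the full $G_v$ rather than some proper subgroup. This requires checking that distinct vertices of $T$ in the same $G$-orbit are not identified by the cone attachments and subsequent Gromov identifications in an unexpected way, and that the structural maps of the complex of groups associated with the local action agree with the inclusions of $G$-stabilisers. Both points follow from the fact that the elements of $H_\xi$ act hyperbolically on $T$, so no nontrivial element of $\ll H_\xi\gg$ stabilises a vertex of $T$ as a by-product of the construction away from the cones; the precise argument is a direct unpacking of the construction of $\mathcal{G}(Y)$ carried out in the body of the paper.
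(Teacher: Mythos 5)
Your high-level plan is the same as the paper's: $\rquotient{G}{\ll H_\xi\gg}$ is the fundamental group of a developable complex of groups, so its local groups embed, and those local groups include the local groups of $G(\Gamma)$. The gap is in the step you dispatch with "this composition agrees with the restriction of the quotient map $G \to \rquotient{G}{\ll H_\xi\gg}$ to $G_\tau$ by the way $\mathcal{G}(Y)$ is set up." Developability gives you an injection $G_\sigma \hookrightarrow \pi_1(\mathcal{G}(Y))$, but by itself it tells you nothing about how this injection interacts with the quotient map $G\to\rquotient{G}{\ll H_\xi\gg}$; proving the proposition requires exhibiting a factorization $G_\sigma \hookrightarrow G \twoheadrightarrow \rquotient{G}{\ll H_\xi\gg}$, and that is exactly where the work lies. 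The paper does this by restricting the complex of groups to the truncated subcomplex $\overline{Y}_\mathrm{slices}$: by the amalgamation construction, the restricted complex of groups $G(\overline{\cY}_\mathrm{slices})$ is the one induced by the $G$-action on $\overline{T}_\mathrm{slices}$, hence has fundamental group $G$; the inclusion of complexes of groups induces, by the Seifert--van Kampen theorem (Theorem \ref{VanKampen}) together with the functoriality of $\pi_1$, a map conjugate to the quotient map, and the commuting square in the functoriality proposition gives the desired factorization. None of this is a "direct unpacking" — it uses the specific tree-of-complexes-of-groups setup of Section \ref{trees}.

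Additionally, the "main obstacle" you flag — worrying that distinct $G$-orbit-equivalent vertices of $T$ get unexpectedly identified, and that only a fraction of $G_v$ survives as a local group — is somewhat misdirected. The complex of groups is defined upstairs using $G$-stabilisers of cells of the truncated coned-off space $\overline{T}_\mathrm{slices}$ (Definition \ref{THEcomplexofgroups}), so the local groups at vertices of $T$ are the full vertex groups of $G(\Gamma)$ by construction; no further verification about orbits is needed, and vertices in the same $G$-orbit being identified in the quotient is exactly what should happen. The argument you sketch about elements of $\ll H_\xi\gg$ not stabilising vertices is not what is at stake here (and is in any case false without invoking small cancellation, since the normal closure of a hyperbolic element can contain elliptic elements); it is used elsewhere, in the comparison of $\overline{X}$ with $\rquotient{\overline{T}}{\ll H_\xi\gg}$ (Lemma \ref{equivisomorphic}), not in this proposition.
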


\begin{prop}
Under the assumptions of Theorem \ref{mainPS1}, every torsion element $g$ of $\rquotient{G}{\ll H_\xi\gg }$ satisfies either 
\begin{itemize}
\item $g$ is conjugate to the projection of a torsion element in a local factor of $G$, or
\item $g$ is conjugate to the projection of an element of $G$ a power of which is in $\ll H_\xi\gg$. 
\end{itemize} 
\end{prop}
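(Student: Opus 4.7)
The plan is to combine two ingredients. First, Theorem~\ref{mainPS1} realises $Q := \rquotient{G}{\ll H_\xi\gg}$ as the fundamental group of a non-positively curved (hence developable) 2-dimensional complex of groups $\mathcal{G}$. A standard theorem in the theory of complexes of groups (Bridson--Haefliger) states that any finite subgroup of the fundamental group of such a complex of groups is conjugate into a local group of $\mathcal{G}$. In particular, any torsion element $g \in Q$ is conjugate to an element lying in some local group of $\mathcal{G}$.

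Second, I would enumerate the local groups of $\mathcal{G}$. By the construction outlined in the discussion following Theorem~\ref{mainPS1}, $\mathcal{G}$ decomposes as a tree of complexes of groups, and its local groups fall into two classes. The first class consists of local groups inherited from the original graph of groups $G(\Gamma)$, i.e.\ subgroups of local groups of $G(\Gamma)$. By the preceding proposition, which asserts that $G \to Q$ embeds each local group of $G$, a torsion element in such a local group is literally the projection of a torsion element in a local factor of $G$; this is case~(1). The second class consists of the finite local groups appearing at the new ``apex'' vertices introduced by Gromov's construction over each axis $A_\xi$. Assumption (RF$_2$) guarantees that the global stabiliser of $A_\xi$ in $G$ is infinite cyclic, and the corresponding apex local group is the finite cyclic quotient of this stabiliser by $H_\xi$. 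Any element of such a local group lifts to an element $t \in G$ stabilising $A_\xi$, and since $H_\xi$ has finite index in the stabiliser, a non-trivial power of $t$ lies in $H_\xi \subseteq \ll H_\xi\gg$; this is case~(2).

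The main technical obstacle is to identify rigorously the finite apex local groups as the expected quotients of axis stabilisers by $H_\xi$. This amounts to unwinding Gromov's coning-off construction at the level of complexes of groups, and in particular to checking that the identifications performed on $\rquotient{\widehat{T}}{\ll H_\xi\gg}$ do not collapse distinct apex vertices into a local group larger than the expected cyclic quotient. Once this bookkeeping is verified, every local group of $\mathcal{G}$ belongs to one of the two classes above, and the proposition follows from the Bridson--Haefliger fixed-point theorem for finite subgroups.
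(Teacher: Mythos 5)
Your approach is essentially the same as the paper's: the paper derives the result from the CAT(0) geometry of the universal cover (torsion elements fix vertices, hence lie in conjugates of local groups), and the subsequent corollary on local groups, proved via Lemmas~\ref{stabiliserinterval} and~\ref{stabpolygon}, supplies exactly the dichotomy you describe between subgroups of local factors of $G(\Gamma)$ and the finite cyclic quotients $S_\xi/\langle g_\xi\rangle$. The bookkeeping you flag as the ``main technical obstacle'' is precisely what those two lemmas carry out.
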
 

In Theorem \ref{mainPS1}, we make the assumption  (RF$_2$) that the (global) stabiliser of any axis $A_\xi$ is infinite cyclic.  This condition is required to adapt the construction of  Gromov to this more general setting. Note that it is automatically satisfied for classical small cancellation theory, that is, for the action of the free group $F_n$ on the $2n$-valent tree. Under condition (RF$_1$), condition (RF$_2$) is equivalent to requiring that: 
\begin{itemize}
\item no non-trivial element of $G$ pointwise fixes an axis $A_\xi$,
\item no element of $G$ reflects an axis $A_\xi$ across a point.
\end{itemize}
This is for instance satisfied for a group $G$ without $2$-torsion acting acylindrically on a simplicial tree, where we say that an action is \textit{acylindrical} if  there exists a uniform upper bound on the distance between two points which are fixed by an infinite subgroup of $G$. \\

In a nutshell, the previous theorem asserts that some small cancellation quotients act in a  well controlled way on a CAT(0) space. With such an action at hand, a natural problem is to determine which properties of the group can be deduced from analogous properties of the stabilisers of simplices. In \cite{MartinBoundaries, MartinCombinationEG}, the author studied such combination problems in the case of cocompact actions on CAT(0) simplicial complexes. In particular, the following acylindrical generalisation of the Bestvina--Feighn combination theorem for hyperbolic groups \cite{BestvinaFeighnCombinationHyperbolic} was proved:

\begin{thm}[M.\cite{MartinBoundaries, MartinCombinationEG}]
Let $G(\cY)$ be a non-positively curved complex of groups over a finite piecewise-Euclidean complex $Y$, with fundamental group $G$ and universal cover $X$. Assume that:
\begin{itemize}
\item The universal cover $X$ is hyperbolic (for the associated piecewise-Euclidean structure),
\item The local groups are hyperbolic and all the local maps are quasiconvex embeddings,
\item The action of $G$ on $X$ is acylindrical. 
\end{itemize}
Then $G$ is hyperbolic and the local groups embed in $G$ as quasiconvex subgroups. \qed
\label{combinationhyperbolic}
\end{thm}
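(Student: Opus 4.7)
The plan is to adapt the Bestvina--Feighn combination theorem from graphs of groups to this 2-dimensional complex-of-groups setting, using $X$ as a higher-dimensional analogue of the Bass--Serre tree. The main steps are: (i) build a proper geometric model for $G$ fibered over $X$, (ii) prove it is hyperbolic by a flaring/thin-triangle argument, and (iii) deduce quasiconvexity of local groups from the same geometric control.

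First, I would construct a geometric model $\widetilde{X}$ for $G$ that combines the large-scale geometry of $X$ with the internal geometry of the local groups: each simplex $\sigma$ of $X$ is replaced by a proper model (for instance a Rips complex) for the local group $G_\sigma$, with equivariant gluings along faces dictated by the local maps of $G(\cY)$. The resulting space supports a proper cocompact isometric action of $G$, so by the \v{S}varc--Milnor lemma $G$ is quasi-isometric to $\widetilde{X}$. The natural projection $\pi:\widetilde{X}\to X$ has fibers quasi-isometric to the local groups, and the quasiconvex-embedding hypothesis on local maps translates into a strong compatibility between the fiber geometry and the base.

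Second, to prove hyperbolicity of $\widetilde{X}$, I would verify a thin-triangle property via flaring. Given a putative fat triangle in $\widetilde{X}$, projecting it to $X$ yields a thin triangle in a hyperbolic space, so its sides nearly concatenate into a tripod. Acylindricity bounds the stabiliser of any long segment of $X$, so the portion of $\widetilde{X}$ lying above such a segment behaves like a \emph{hyperbolic strip}: combined with hyperbolicity of the local groups and the quasiconvex embedding of edge-to-vertex inclusions, this forces the fiber direction to flare, i.e., metric distances in the strip grow exponentially away from its centre. This rules out fat triangles, giving hyperbolicity of $\widetilde{X}$ and hence of $G$.

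Third, for quasiconvexity of each local group $H=G_\sigma$, I would apply the same geometric control to geodesics: a $\widetilde{X}$-geodesic connecting two points of the fiber $\pi^{-1}(\sigma)$ projects to a bounded excursion from $\sigma$ in $X$ (by hyperbolicity of $X$ and acylindricity), and quasiconvex embedding of face stabilisers inside cell stabilisers then keeps the lift within a bounded neighbourhood of the fiber. The main obstacle is the uniform verification of the flaring condition in this 2-dimensional generality: one must combine hyperbolicity constants from $X$ with quasi-isometry constants from the local-group inclusions, uniformly across all cells and all hallways, and formulate a correct analogue of Bestvina--Feighn's annuli-flare that handles 2-cells with possibly infinite stabilisers. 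Making this uniform control precise is the technical heart of the proof.
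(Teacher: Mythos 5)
This theorem is cited from \cite{MartinBoundaries, MartinCombinationEG} with a \verb|\qed|, so the paper itself contains no proof; I will assess your proposal against the strategy of those references. Your first step — building a $G$-cocompact ``complex of spaces'' $\widetilde{X}$ by replacing simplices of $X$ with proper models for the local groups, glued along faces via the local maps — is indeed the correct foundation and matches the construction in the cited work. The hyperbolicity argument, however, has a genuine gap. You propose to take a putative fat geodesic triangle in $\widetilde{X}$, project it to $X$, and invoke thinness of triangles in the hyperbolic space $X$. But the projection $\pi:\widetilde{X}\to X$ collapses the fibers, which are quasi-isometric to the (possibly unbounded) local groups, so $\pi$ is nowhere near a quasi-isometry and does not carry geodesics of $\widetilde{X}$ to geodesics or uniform quasi-geodesics of $X$. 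The image of a geodesic triangle of $\widetilde{X}$ need not be a thin triangle in $X$ at all, so thinness downstairs does not constrain the triangle upstairs. A Bestvina--Feighn argument must instead control hallways intrinsically in $\widetilde{X}$, and, as you note yourself, formulating and verifying a 2-dimensional flare condition with infinite 2-cell stabilisers is exactly the unresolved technical heart — your proposal identifies this obstacle but does not close it.

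There is also a latent circularity in your third step: asserting that a $\widetilde{X}$-geodesic between two points of a fiber $\pi^{-1}(\sigma)$ projects to a bounded excursion from $\sigma$ is essentially the quasiconvexity statement you want to prove, and hyperbolicity of $X$ together with acylindricity alone do not produce it. Notice also that your argument never uses the CAT(0) hypothesis on $X$ (non-positive curvature of $G(\cY)$), which is strictly stronger than hyperbolicity and is not a throwaway assumption. The route in the cited references exploits it directly: closest-point projections in the CAT(0) complex $X$, together with acylindricity and the quasiconvex embedding of local maps, are used to build a compactification of $\widetilde{X}$ (an EZ-structure) and to establish a contraction property of geodesics relative to the fibers; hyperbolicity of $G$ and quasiconvexity of the local groups are then read off from that boundary, sidestepping both the projection problem and the circularity above. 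Your outline and the cited proof aim for the same conclusion, but they diverge precisely at the step where you rely on projecting geodesics to the base, and that step does not survive scrutiny as written.
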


We use this result to prove the following combination theorem for hyperbolic groups:

\begin{thm}
There exists a universal constant $0<\lambda_\mathrm{univ} \leq \frac{1}{6}$ such that the following holds. Let $G(\Gamma)$ be a graph of groups over a finite simplicial graph $\Gamma$, with fundamental group $G$ and Bass--Serre tree $T$.  Let $(A_\xi, H_\xi)_{\xi \in \Xi}$ a rotation family satisfying conditions {\normalfont (RF$_1$)} and {\normalfont (RF$_2$)}. Suppose in addition that the following holds:
\begin{itemize}
\item[{\normalfont (HC$_1$) }] The local groups of $G(\Gamma)$ are hyperbolic and all the local maps are quasiconvex embeddings,
\item[{\normalfont (HC$_2$) }] The action of $G$ on $T$ is acylindrical, 
\item[{\normalfont (HC$_3$) }] There are only finitely many elements in $\Xi$ modulo the action of $G$. 
\end{itemize}
If $(A_\xi, H_\xi)_{\xi \in \Xi}$ satisfies the geometric small cancellation condition $C''(\lambda_\mathrm{univ})$, then the quotient group $\rquotient{G}{\ll H_\xi \gg }$ is hyperbolic and the projection $G \ra \rquotient{G}{\ll H_\xi \gg }$ embeds each local group of $G(\Gamma)$ as a quasiconvex subgroup.
\label{mainPS2}
\end{thm}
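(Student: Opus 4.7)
The strategy is to apply Theorem \ref{mainPS1} to present $\bar{G} := \rquotient{G}{\ll H_\xi \gg}$ as the fundamental group of a non-positively curved $2$-dimensional complex of groups $\bar{G}(\cY)$, and then verify the three hypotheses of the combination theorem \ref{combinationhyperbolic} for the action of $\bar{G}$ on the universal cover $X$ of $\bar{G}(\cY)$. Condition (HC$_3$) ensures that, after quotienting, the base complex $\cY$ is finite, so $X$ carries a cocompact $\bar{G}$-action and the piecewise-Euclidean structure required by Theorem \ref{combinationhyperbolic} is available.

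For the local groups, Theorem \ref{mainPS1} asserts that each local group of $\bar{G}(\cY)$ is either finite or a subgroup of a local group of $G(\Gamma)$. By (HC$_1$) the latter are hyperbolic, and finite groups are trivially hyperbolic; moreover, the local maps of $\bar{G}(\cY)$ are either inclusions involving a finite group---which are quasiconvex in any hyperbolic overgroup---or are induced by edge maps of $G(\Gamma)$ and hence quasiconvex embeddings by (HC$_1$).

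The remaining two conditions---hyperbolicity of $X$ and acylindricity of the action of $\bar{G}$ on $X$---are what force $\lambda_\mathrm{univ}$ to be small. The space $X$ is a piecewise-Euclidean modification of the coned-off Bass--Serre tree $\widehat{T}$ obtained through Gromov's construction. The coned-off tree $\widehat{T}$ is itself hyperbolic: under (HC$_2$) the action of $G$ on $T$ is acylindrical, and a thin-triangle argument (using also the $C''(1/6)$ condition to control overlaps of axes) shows that $\widehat{T}$ is $\delta_0$-hyperbolic for some $\delta_0$. Choosing $\lambda_\mathrm{univ}$ small enough relative to $\delta_0$ and to the acylindricity constant of $G \curvearrowright T$, the geometric small cancellation machinery in the spirit of \cite{CoulonAsphericity, DelzantGromovCourbureMesoscopique, DahmaniGuirardelOsin} then yields both hyperbolicity of $X$ and acylindricity of the quotient action: roughly, any element stabilising a long path in $X$ must either project to one stabilising a long path in $\widehat{T}$---which is ruled out by (HC$_2$)---or thread through so many of the modified cone pieces that it must already lie in one of the $H_\xi$, contradicting that $H_\xi$ acts on $A_\xi$ with translation length at least the small cancellation injectivity radius.

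The main obstacle will be pinning down $\lambda_\mathrm{univ}$ explicitly, and in particular verifying acylindricity: this amounts to showing that the identifications performed in Gromov's construction do not create new large stabilisers in $X$, and hence requires a careful quantitative comparison between stabilisers of subcomplexes of $X$ and stabilisers of the corresponding subspaces of $\widehat{T}$. Once the three hypotheses are in place, Theorem \ref{combinationhyperbolic} applies and yields both the hyperbolicity of $\bar{G}$ and quasiconvex embeddings of every local group of $\bar{G}(\cY)$---hence, in particular, of every local group of $G(\Gamma)$---into $\bar{G}$, which is the desired conclusion.
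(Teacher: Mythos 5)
Your high-level plan matches the paper's: produce the non-positively curved complex of groups from Theorem \ref{mainPS1} and then verify the three hypotheses of Theorem \ref{combinationhyperbolic}. But two steps are handled incorrectly and constitute genuine gaps.

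First, your description of the local groups and local maps of the new complex of groups is wrong. You claim the local maps ``are either inclusions involving a finite group \dots or are induced by edge maps of $G(\Gamma)$ and hence quasiconvex embeddings by (HC$_1$).'' In fact, by Lemma \ref{stabiliserinterval} the nontrivial local groups arising from $G \curvearrowright \overline{T}_{\mathrm{slices}}$ are (up to index $2$) global stabilisers of \emph{intervals} $I_u$ in $T$, i.e.\ intersections $\bigcap_{v\in I_u} G_v$ of several vertex stabilisers, and the local maps are the inclusions corresponding to inclusions of intervals $I' \subset I$. Hyperbolicity of such an intersection, and quasiconvexity of $\bigcap_{v\in I} G_v$ inside $\bigcap_{v\in I'} G_v$, does \emph{not} follow from (HC$_1$) alone (subgroups of hyperbolic groups need not be hyperbolic, and quasiconvexity is not preserved under arbitrary intersections). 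The paper needs, and invokes, a specific result (Corollary~9.2 of \cite{MartinBoundaries}, restated here as a lemma before Lemma \ref{quasiconvexembedding}), which in turn rests on both (HC$_1$) and the acylindricity (HC$_2$). You should cite and use that result explicitly; as written, your verification of the ``local groups hyperbolic, local maps quasiconvex'' hypothesis fails.

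Second, the transfer of hyperbolicity and acylindricity to the universal cover $X$ is glossed over. The small cancellation results (from \cite{DahmaniGuirardelOsin}) give hyperbolicity of $\rquotient{T}{\ll H_\xi\gg}$, and hence of $\rquotient{\widehat T}{\ll H_\xi\gg}$, and acylindricity of the quotient action; but the space on which Theorem \ref{combinationhyperbolic} is being applied is $X_{\mathrm{slices}}$ (or $X$), not $\rquotient{\widehat T}{\ll H_\xi\gg}$. The bridge is that the collapsing map $p_{\mathrm{coll}}$ and the slice-identification map $p_{\mathrm{slices}}$ are equivariant quasi-isometric quasi-inverses, together with the equivariant isomorphism of Lemma \ref{equivisomorphic} identifying the truncated universal cover with $\rquotient{\overline T}{\ll H_\xi\gg}$. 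Without this chain of equivariant quasi-isometries you have not actually verified acylindricity and hyperbolicity for the space the combination theorem requires. Finally, your phrasing ``Choosing $\lambda_{\mathrm{univ}}$ small enough relative to $\delta_0$ and to the acylindricity constant of $G\curvearrowright T$'' is backwards: the constant is genuinely \emph{universal} (as in Proposition \ref{DahmaniGuirardelOsin}), independent of $\delta_0$ and of the given acylindricity constants; these quantities do not feed into the choice of $\lambda_{\mathrm{univ}}$.
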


In Theorem \ref{mainPS2}, the group $G$ itself is hyperbolic as a consequence of the Bestvina--Feighn combination theorem \cite{BestvinaFeighnCombinationHyperbolic}. While small cancellation theory over a hyperbolic group  studies quotients of hyperbolic groups by means of their action on their (hyperbolic) Cayley graphs, the previous theorem provides small cancellation tools to study quotients of hyperbolic groups that split by means of their actions on their associated Bass--Serre trees.

Note that the geometry of the quotient complexes and quotient groups under the action of a rotation family has been considered for instance in \cite{CoulonAsphericity, DahmaniGuirardelOsin}. This article however is to the author's knowledge one of the first that proves the hyperbolicity of a small cancellation quotient in the case of non-proper actions.\\

In particular, Theorem \ref{mainPS2} has the following corollaries. Recall that, given a group $G$, a family of subgroups $(H_i)_{i \in I}$ is said to be \textit{almost malnormal} in $G$ if the following holds: 
\begin{itemize}
\item for every $i \neq j$ in $I$ and every $g,h \in G$, $gH_ig^{-1} \cap hH_jh^{-1}$ is finite, 
\item for every $i  \in I$ and every $g \in G - H_i$, $gH_ig^{-1} \cap H_i$ is finite.
\end{itemize}

\begin{cor}
Let $G=G_1 *_M G_2 $ be an amalgamated product such that $G_1$ and $G_2$ are hyperbolic groups without $2$-torsion, and $M$ embeds in both $G_1$ and $G_2$ as an almost malnormal quasiconvex subgroup. Let $(A_\xi, H_\xi)_{\xi \in \Xi}$ be a rotation family as in Theorem \ref{mainPS2} and satisfying the small cancellation $C''(\lambda_\mathrm{univ})$. Then the quotient group $\rquotient{G}{\ll H_\xi \gg }$ is hyperbolic and the projection $G \ra \rquotient{G}{\ll H_\xi \gg }$ embeds $G_1$ and $G_2$ as quasiconvex subgroups.\qed
\end{cor}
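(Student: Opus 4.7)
The plan is to verify that all hypotheses of Theorem \ref{mainPS2} are satisfied for the amalgamated product $G = G_1 *_M G_2$, viewed as the fundamental group of a graph of groups over a single edge, and then to invoke Theorem \ref{mainPS2} directly. Most of the work is absorbed into the corollary's explicit assumptions: (HC$_1$) is precisely the hypothesis that $G_1$ and $G_2$ are hyperbolic and that $M$ embeds quasiconvexly in each; the small cancellation condition $C''(\lambda_\mathrm{univ})$ is imposed; and the conditions (RF$_1$), (RF$_2$) and (HC$_3$) pertaining to the rotation family are packaged into the phrase ``as in Theorem \ref{mainPS2}''. Note also that $G$ inherits the absence of $2$-torsion from $G_1$ and $G_2$, since any torsion element of an amalgamated product is conjugate into a vertex group; this is consistent with (RF$_2$) being natural in this setting per the remark following Theorem \ref{mainPS1}.

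The core step is to verify (HC$_2$): that the action of $G$ on its Bass--Serre tree $T$ is acylindrical. I would argue as follows. The vertex stabilisers of $T$ are conjugates of $G_1$ or $G_2$, while the edge stabilisers are conjugates of $M$. Fix a vertex $v$ of $T$ with stabiliser $G_v$ (a conjugate of some $G_i$), and two distinct edges $e_1, e_2$ incident to $v$. Then $\mathrm{Stab}(e_1)$ and $\mathrm{Stab}(e_2)$ are conjugates of $M$ inside $G_v$ corresponding to distinct cosets of $M$ in $G_v$; these are distinct subgroups, since almost malnormality forces $N_{G_i}(M) = M$ whenever $M$ is infinite. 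By almost malnormality of $M$ in $G_i$, the intersection $\mathrm{Stab}(e_1) \cap \mathrm{Stab}(e_2)$ is finite. Consequently, the pointwise stabiliser of any geodesic segment of combinatorial length at least $2$ in $T$ is finite, and the action is acylindrical with constant $2$.

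With (HC$_1$) and (HC$_2$) verified, Theorem \ref{mainPS2} applies directly: the quotient $\rquotient{G}{\ll H_\xi \gg}$ is hyperbolic, and the projection $G \ra \rquotient{G}{\ll H_\xi \gg}$ embeds the local groups $G_1$ and $G_2$ as quasiconvex subgroups. The main (and essentially only) difficulty is the acylindricity verification, but this is a standard consequence of almost malnormality that does not even appeal to hyperbolicity of the vertex groups; the rest of the proof is simply an assembly of hypotheses of Theorem \ref{mainPS2}.
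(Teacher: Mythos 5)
Your proof is correct and takes the approach the author clearly intends (the paper marks the corollary \qed without proof, viewing it as a direct specialisation of Theorem \ref{mainPS2}). The only substantive verification required is (HC$_2$), and your argument for acylindricity is sound: two distinct edges at a common vertex of the Bass--Serre tree have stabilisers which are conjugates of $M$ by elements lying in distinct left cosets of $M$ in the vertex group $G_i$, and almost malnormality of $M$ in $G_i$ then forces their intersection to be finite; consequently the pointwise stabiliser of any segment of combinatorial length at least $2$ is finite, giving a uniform acylindricity bound. The aside about $N_{G_i}(M)=M$ is correct but not needed — finiteness of the intersection follows directly from the cosets being distinct, whether or not the two conjugate subgroups coincide (and when $M$ is finite the whole point is moot anyway). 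Your remark on the inheritance of no $2$-torsion by $G$ is also correct and is presumably why the author lists that hypothesis — it lets one deduce (RF$_2$) from acylindricity rather than assume it independently — but since you read (RF$_2$) as packaged into ``as in Theorem \ref{mainPS2}'', you do not rely on it.
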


\begin{cor}
Let $G= G_1 *_\varphi$ be an HNN extension associated to an isomorphism $\varphi: H_1 \ra H_2$ between subgroups $H_1, H_2$ of $G_1$, such that $G_1$ is a hyperbolic group without $2$-torsion, $H_1$ and $H_2$ are quasiconvex in $G_1$ and the family $(H_1, H_2)$ is almost malnormal in $G_1$. Let $(A_\xi, H_\xi)_{\xi \in \Xi}$ be a rotation family as in Theorem \ref{mainPS2} and satisfying the small cancellation $C''(\lambda_\mathrm{univ})$. Then the quotient group $\rquotient{G}{\ll H_\xi \gg }$ is hyperbolic and the projection $G \ra \rquotient{G}{\ll H_\xi \gg }$ embeds $G_1$ as a quasiconvex subgroup.\qed
\end{cor}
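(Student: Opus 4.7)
The plan is to realise $G$ as the fundamental group of a graph of groups and then verify the hypotheses of Theorem~\ref{mainPS2}. Specifically, $G = G_1 *_\varphi$ is the fundamental group of the graph of groups $G(\Gamma)$ whose underlying graph has one vertex carrying the group $G_1$ and one loop edge carrying the group $H_1$, with the two edge-to-vertex inclusions being $H_1 \hookrightarrow G_1$ and the composition $H_1 \xrightarrow{\varphi} H_2 \hookrightarrow G_1$ (subdividing the loop if one insists that $\Gamma$ be simplicial). Since the rotation-family hypotheses (RF$_1$), (RF$_2$) and the finiteness condition (HC$_3$) are built into the statement along with the $C''(\lambda_\mathrm{univ})$ condition, only (HC$_1$) and (HC$_2$) need to be checked.

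The verification of (HC$_1$) is immediate: the vertex group $G_1$ is hyperbolic by assumption, the edge group $H_1$ is quasiconvex in the hyperbolic group $G_1$ hence itself hyperbolic, and the two local maps embed $H_1$ and $H_2$ into $G_1$ as quasiconvex subgroups by hypothesis.

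The main step is (HC$_2$), where the almost malnormality of $(H_1, H_2)$ in $G_1$ is used. Let $T$ be the Bass--Serre tree of $G(\Gamma)$: vertex stabilisers are conjugates of $G_1$ and edge stabilisers are conjugates of $H_1$ or $H_2$. I would show that for any two distinct edges $e_1, e_2$ of $T$ sharing a vertex $v$, the pointwise stabiliser $\mathrm{Stab}(e_1) \cap \mathrm{Stab}(e_2)$ is finite. Indeed, after conjugating inside $\mathrm{Stab}(v)$ to identify it with $G_1$, this intersection takes the form $H_i \cap g H_j g^{-1}$ for some $i, j \in \{1, 2\}$ and $g \in G_1$, where distinctness of the two edges (which correspond to distinct cosets in $G_1/H_1 \sqcup G_1/H_2$) forces either $i \neq j$, or $i = j$ and $g \notin H_i$; in either case almost malnormality makes the intersection finite. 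Consequently, no infinite subgroup of $G$ can pointwise fix a path in $T$ containing at least two edges, so the action of $G$ on $T$ is acylindrical with uniform constant $1$, giving (HC$_2$).

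With all hypotheses verified, Theorem~\ref{mainPS2} applies directly and yields the conclusion, $G_1$ being a vertex group and hence a local group of $G(\Gamma)$. I expect the only genuinely substantive step to be the translation of almost malnormality into acylindricity of the action on $T$, which requires a careful look at how two distinct edges at a common vertex are parameterised by cosets of the two edge-group embeddings; once that is unravelled, the corollary is essentially a direct consequence of the main theorem.
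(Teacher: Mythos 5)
Your proposal is correct and follows the intended route: realise the HNN extension as a graph of groups, note that (HC$_1$) and (HC$_3$) are immediate, translate almost malnormality of $(H_1,H_2)$ into finiteness of intersections of edge stabilisers at a common vertex of the Bass--Serre tree (and hence acylindricity with a constant at most $1$), and invoke Theorem~\ref{mainPS2}. The coset bookkeeping identifying the intersection of two distinct edge stabilisers at a vertex with a subgroup of the form $H_i \cap gH_jg^{-1}$ inside $G_1$ is exactly the substantive point, and you handle it correctly.
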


By adapting the previous construction, we obtain similar combination results for finiteness properties. In order to do that, we use the following combination result, which generalises a construction due to Haefliger \cite{HaefligerExtension}.

\begin{thm}[M. \cite{MartinCombinationEG}]
\label{combinationEG}
Let $G(\cY)$ be a developable complex of groups over a finite simplicial complex $Y$, with fundamental group $G$ and universal cover $X$. Suppose that:
\begin{itemize}
\item for every finite $H$ subgroup of $G$, the  fixed-point set $X^H$ is contractible,
\item every local group admits a cocompact model of classifying space for proper actions.
\end{itemize}  
Then there exists a cocompact model of classifying space for proper actions for $G$.\qed
\end{thm}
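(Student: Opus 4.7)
The plan is to build an explicit model of a classifying space for proper actions of $G$ by \emph{fibering} an equivariant space over the universal cover $X$, with fibers given by the classifying spaces of the local groups. This is the natural generalisation of Haefliger's construction \cite{HaefligerExtension} of a $K(\pi,1)$ over a complex of groups, carried out in the equivariant setting of proper actions.

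First, for each simplex $\sigma$ of $Y$, fix a cocompact model $\underline{E}G_\sigma$ of the classifying space for proper actions of the local group $G_\sigma$. For each face relation $\tau \subset \sigma$, the local map $G_\sigma \to G_\tau$ of the complex of groups turns $\underline{E}G_\tau$ into a proper $G_\sigma$-space by restriction; by the universal property of classifying spaces for proper actions, there is a $G_\sigma$-equivariant map $f_{\tau\sigma} \colon \underline{E}G_\sigma \to \underline{E}G_\tau$, unique up to equivariant homotopy. I would choose such maps inductively on the dimension of $\sigma$ so that the cocycle relations imposed by chains of faces (and by the twisting elements of the complex of groups) are satisfied, adjusting by equivariant homotopies where needed. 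Using these maps, I would then construct a $G$-space $E$ by replacing each simplex $\widetilde{\sigma}$ of $X$ lying above $\sigma$ in $Y$ by $\widetilde{\sigma} \times \underline{E}G_\sigma$, and gluing along the maps $f_{\tau\sigma}$ in a mapping-cylinder fashion. The $G$-action on $X$ lifts naturally to $E$.

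It then remains to verify the three defining properties of a cocompact classifying space for proper actions. Cell stabilisers in $E$ are finite, because the stabiliser of a point in the fiber over a simplex $\widetilde{\sigma}$ equals the stabiliser of its image in the $G_\sigma$-action on $\underline{E}G_\sigma$, which is finite by construction. Cocompactness of the $G$-action on $E$ follows from the finiteness of $Y$ combined with the cocompactness of each $\underline{E}G_\sigma$. Finally, for every finite subgroup $H \leq G$, the projection $E \to X$ restricts to $E^H \to X^H$, and the fiber over a simplex $\widetilde{\sigma} \subset X^H$ is a fixed-point set $\underline{E}G_\sigma^{H'}$ for a finite subgroup $H' \leq G_\sigma$, hence contractible by the classifying-space property. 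Combined with the hypothesis that $X^H$ is contractible, this should yield the contractibility of $E^H$.

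The main obstacle lies in this last point: assembling contractible fibers over a contractible base into a contractible total space is a genuine gluing problem, most cleanly handled by a Mayer--Vietoris or spectral-sequence argument applied to an open cover of $E^H$ by preimages of open stars of vertices of $X^H$, where the coherence of the chosen maps $f_{\tau\sigma}$ plays an essential role. Together with the technical (but straightforward in principle) coherent construction of these maps, this constitutes the heart of the proof.
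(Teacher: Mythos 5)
The paper does not prove this theorem: it is stated as an import from \cite{MartinCombinationEG}, with the surrounding discussion only noting that it generalises Haefliger's construction \cite{HaefligerExtension}. Your proposal is exactly that generalisation --- a complex of classifying spaces for proper actions assembled over $X$ via mapping telescopes, with contractibility of $E^H$ deduced from a nerve-type argument over the contractible base $X^H$ --- so it matches the intended external argument, and the two subtleties you flag (coherent choice of the structural maps $f_{\tau\sigma}$ compatibly with the twisting elements of the complex of groups, and the homotopy-colimit/nerve lemma step) are indeed where the real work lies.
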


In particular, we use this to prove the following:

\begin{thm}
Let $G(\Gamma)$ be a graph of groups over a finite simplicial graph $\Gamma$, with fundamental group $G$ and Bass--Serre tree $T$.  Let $(A_\xi, H_\xi)_{\xi \in \Xi}$ a rotation family that satisfies conditions {\normalfont (RF$_1$)} and {\normalfont (RF$_2$)} of Theorem \ref{mainPS1}, as well as the geometric small cancellation condition $C''(1/6)$. If all the local groups of $G(\Gamma)$ admit cocompact models of classifying spaces for proper actions, then so does $\rquotient{G}{\ll H_\xi \gg }$.
\label{mainPS3}
\end{thm}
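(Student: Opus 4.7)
The strategy is to apply Theorem \ref{combinationEG} to the non-positively curved complex of groups furnished by Theorem \ref{mainPS1}. Explicitly, Theorem \ref{mainPS1} produces a non-positively curved $2$-dimensional complex of groups $G(\cY)$ over a finite polyhedral complex $Y$, with fundamental group $\rquotient{G}{\ll H_\xi \gg}$ and piecewise Euclidean CAT(0) universal cover $X$. I would then verify the two hypotheses of Theorem \ref{combinationEG}: first, that for every finite subgroup $H \leq \rquotient{G}{\ll H_\xi \gg}$ the fixed-point set $X^H$ is contractible, and second, that every local group of $G(\cY)$ admits a cocompact model of classifying space for proper actions.

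The first condition is the easier of the two. Since $G(\cY)$ is defined over a finite complex, its universal cover $X$ has only finitely many isometry types of cells and is therefore a complete CAT(0) space by Bridson's theorem. Any finite group acting by isometries on a complete CAT(0) space has a fixed point by Cartan's fixed-point theorem, and the fixed-point set of such an action is a non-empty convex subspace, hence contractible.

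For the second condition, I would exploit the fact that, by Theorem \ref{mainPS1}, each local group of $G(\cY)$ is either finite or a subgroup of a local group of $G(\Gamma)$. Finite groups trivially admit a point as a cocompact classifying space for proper actions. For the remaining local groups, I would inspect the tree-of-complexes-of-groups decomposition used in the proof of Theorem \ref{mainPS1} to identify exactly which subgroups appear. These should fall into two classes: the local groups coming from the piece associated with $G(\Gamma)$ itself, which are conjugates of local groups of $G(\Gamma)$ and inherit the property by hypothesis; and the local groups in the cone pieces attached along the orbits of the axes $A_\xi$, which are stabilizers of vertices and edges lying on an axis $A_\xi$, and which by condition (RF$_2$) and the very structure of the Bass--Serre tree $T$ coincide (up to conjugation) with vertex or edge groups of $G(\Gamma)$.

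The main obstacle is the careful bookkeeping of the local groups arising in the cone pieces: since the class of groups admitting a cocompact $\underline{E}$ is not closed under taking arbitrary subgroups, one must rule out that any local group appears as a proper subgroup of a local group of $G(\Gamma)$ for which the property could fail. Once this identification is settled, the two hypotheses of Theorem \ref{combinationEG} are in place, and its conclusion yields a cocompact model of classifying space for proper actions for $\rquotient{G}{\ll H_\xi \gg}$.
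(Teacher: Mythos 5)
Your proposal applies Theorem \ref{combinationEG} directly to the complex of groups $G(\cY_\mathrm{slices})$ from Theorem \ref{mainPS1}, and that is where the argument breaks down. Your treatment of the first hypothesis (contractibility of fixed-point sets) is fine, and in fact easier than the paper's, since $X_\mathrm{slices}$ is CAT(0) and fixed-point sets of finite groups are convex. But the second hypothesis fails as you have set things up. The local groups of $G(\cY_\mathrm{slices})$ coming from the action of $G$ on $\overline{T}_\mathrm{slices}$ are, by Lemma \ref{stabiliserinterval}, the \emph{global stabilisers of the intervals $I_u \subset T$}, and these intervals can contain several edges (up to $l_\mathrm{max}$ of them). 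Such a stabiliser is, up to index $2$, the intersection $\bigcap_{v \in I_u} G_v$ of several vertex stabilisers, not a vertex or edge group of $G(\Gamma)$; this is exactly what the corollary following Lemma \ref{stabpolygon} records, namely that the local groups are only \emph{subgroups} of the local groups of $G(\Gamma)$. Your assertion that the local groups supported on the axes ``coincide (up to conjugation) with vertex or edge groups of $G(\Gamma)$'' is therefore incorrect, and this is precisely the obstacle you flagged but did not resolve: the class of groups admitting a cocompact classifying space for proper actions is not closed under passing to subgroups or intersections, so the hypothesis on local groups cannot be verified for $G(\cY_\mathrm{slices})$ in general.

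The paper's proof is designed to evade this exact problem. Instead of $X_\mathrm{slices}$, it builds a different $\rquotient{G}{\ll H_\xi \gg}$-complex $X$ by radially collapsing the polygons of $X_\mathrm{slices}$ and undoing the slice identifications. The resulting complex $X$ is topologically just $\widetilde{\Gamma}$ (i.e.\ $\rquotient{T}{\ll H_\xi\gg}$) with disjoint polygons attached along the images of the axes, so simplex stabilisers are either the finite cyclic groups $S_\xi/\langle g_\xi\rangle$ at the apices, trivial in the interiors (using (RF$_2$)), or honest conjugates of vertex/edge groups of $G(\Gamma)$ on $\widetilde{\Gamma}$; all of these have cocompact models by hypothesis. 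The cost is that $X$ is no longer CAT(0), so the contractibility of fixed-point sets is no longer automatic; this is the content of Lemma \ref{contractiblefixedpoint}, proved by comparing $X$ with the CAT(0) complex $X_\mathrm{slices}$ through the equivariant maps $p_\mathrm{coll}$ and $p_\mathrm{slices}$. To repair your argument you would need to either replicate this modified construction, or else add a hypothesis guaranteeing that the pointwise stabilisers of intervals of $T$ of length at most $l_\mathrm{max}$ admit cocompact classifying spaces for proper actions.
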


Here is an outline of the article. Section \ref{complexesofgroups} contains some background and notations on complexes of groups. Section \ref{trees} contains gluing constructions for complexes of groups which are reminiscent of the theory of orbispaces introduced by Haefliger \cite{HaefligerOrbihedra}. Section \ref{smallcancellation} gives a short presentation of geometric small cancellation theory over a graph of groups, using the language of rotation families. Given a small cancellation group $\rquotient{G}{\ll H_\xi \gg }$ over a graph of groups as in Theorem \ref{mainPS1}, we construct in Section \ref{Gromovconstruction} a non-positively curved $2$-dimensional complex of groups that admit $\rquotient{G}{\ll H_\xi \gg }$ as its fundamental group, by generalising a construction of Gromov. Section \ref{EG} deals with finiteness properties and explains the construction of a cocompact model of classifying space for proper actions for $ \rquotient{G}{\ll H_\xi  \gg }$. We study the action resulting from our construction in Section \ref{hyperbolicityquotient} and prove the hyperbolicity of the quotient group. Finally, Section \ref{CAT0} is a technical section proving that Gromov's construction does indeed produce a CAT(0) space. \\

\noindent \textbf{Acknowledgements.} I would like to thank Thomas Delzant for proposing this problem to me and for many useful discussions during this work.

\section{Background on complexes of groups.}
\label{complexesofgroups}

\subsection{First definitions.}

Graphs of groups are algebraic objects that were introduced by Serre \cite{SerreTrees} to encode group actions on simplicial trees. If one wants to generalise that theory to higher dimensional complexes, one needs the theory of complexes of groups developed by Gersten-Stallings \cite{GerstenStallings}, Corson \cite{CorsonComplexesofGroups} and Haefliger \cite{HaefligerOrbihedra}. Haefliger defined a notion of complexes of groups over more general objects called \textit{small categories without loops} (abbreviated \textit{scwol}), a combinatorial generalisation of polyhedral complexes. Although in this article we will only deal with actions on simplicial complexes, we use the terminology of scwols to be coherent with the existing literature on complexes of groups. For a deeper treatment of the material covered in this paragraph and for the general theory of complexes of groups over scwols, we refer the reader to \cite{BridsonHaefliger}.

\begin{definition}[small category without loop]
A  \textit{small category without loop} (briefly a \textit{scwol}) is a set $\cY$ which is the disjoint union of a set $V(\cY)$ called the vertex set of $\cY$, and a set $E(\cY)$ called the edge set of $\cY$, together with maps
$$i: E(\cY)  \ra V(\cY) \mbox{ and } t: E(\cY) \ra V(\cY).$$
For an edge $a \in E(\cY)$, $i(a)$ is called the initial vertex of $a$ and $t(a)$ the terminal vertex of $a$. 

Let $E^{(2)}(\cY)$ be the set of pairs $(a, b) \in E(\cY)$ such that $i(a) = t(b)$. A third map 
$$ E^{(2)}(\cY) \ra E(\cY) $$
is given that associates to such a pair $(a,b)$ an edge $ab$ called their composition (and $a$ and $b$ are said to be composable). These maps are required to satisfy the following conditions:
\begin{itemize}
 \item For every $(a,b) \in E^{(2)}(\cY)$, we have $i(ab)=i(b)$ and $t(ab)=t(a)$; 
 \item For every $a,b,c \in E(\cY)$ such that $i(a) = t(b)$ and $i(b)=t(c)$, we have $(ab)c = a(bc)$ (and the composition is simply denoted $abc$).
 \item For every $a \in E(\cY)$, we have $t(a) \neq i(a)$.
\end{itemize}
\end{definition}

\begin{definition}[simplicial scwol associated to a simplicial complex] If $Y$ is a simplicial complex, a scwol $\cY$ is naturally associated to $Y$ in the following way: 
\begin{itemize}
 \item $V(\cY)$ is the set $S(Y)$ of simplices of $Y$, 
 \item $E(\cY)$ is the set of pairs $(\sigma, \sigma') \in V(\cY)^2$ such that $\sigma \subset \sigma'$.
 \item For a pair $a=(\sigma, \sigma') \in E(\cY)$, we set $i(a) = \sigma'$ and $t(a)=\sigma$.
 \item For composable edges $a=(\sigma, \sigma')$ and $b=(\sigma', \sigma'')$, we set $ab=(\sigma, \sigma'')$.
\end{itemize}
We call $\cY$ the \textit{simplicial scwol associated to $Y$}.
\end{definition}

In what follows, we will often omit the distinction between a simplex $\sigma$ of $Y$ and the associated vertex of $\cY$.

\begin{definition}[Complex of groups \cite{BridsonHaefliger}]
 Let $\cY$ be a scwol. A \textit{complex of groups $G(\cY)= (G_\sigma, \psi_a, g_{a,b})$ over $\cY$} is given by the following data: 
\begin{itemize}
 \item for each vertex $\sigma$ of $\cY$, a group $G_\sigma$ called the \textit{local group} at $\sigma$,
 \item for each edge $a$ of $\cY$, an injective homomorphism $\psi_a: G_{i(a)} \ra G_{t(a)}$,
 \item for each pair of composable edges $(a,b)$ of $\cY$, a \textit{twisting element} $g_{a,b} \in G_{t(a)}$, with the following compatibility conditions:
\begin{enumerate}
 \item for every pair $(a,b)$ of composable edges of $\cY$, we have 
$$ \mbox{Ad}(g_{a,b}) \psi_{ab} =  \psi_a \psi_b,$$
where $\mbox{Ad}(g_{a,b})$ is the conjugation by $g_{a,b}$ in $G_{t(a)}$; 
 \item if $(a,b)$ and $(b,c)$ are pairs of composable edges of $\cY$, then the following cocycle condition holds:
$$ \psi_a(g_{b,c})g_{a,bc}= g_{a,b}g_{ab,c}.$$
\end{enumerate}
\end{itemize}
If $Y$ is a simplicial complex, a complex of groups \textit{over $Y$} is a complex of groups over the associated simplicial scwol $\cY$.
\end{definition}

\begin{definition}[Morphism of complex of groups]
 Let $Y, Y'$ be simplicial complexes, $\cY$ (resp. $\cY'$) the associated simplicial scwols, $f: Y \ra Y'$ a non-degenerate simplicial map (that is, the restriction of $f$ to any simplex is a homeomorphism onto its image), and $G(\cY)$ (resp. $G(\cY')$) a complex of groups over $Y$ (resp. $Y'$). A \textit{morphism} $F=(F_\sigma, F(a))$ : $G(\cY) \ra G(\cY')$ over $f$ consists of the following:
\begin{itemize}
 \item for each vertex $\sigma$ of $\cY$, a homomorphism $F_{\sigma}: G_\sigma \ra G_{f(\sigma)}$, 
 \item for each edge $a$ of $\cY$, an element $F(a) \in G_{t(f(a))}$ such that
   \begin{enumerate}
    \item for every edge $a$ of $\cY$, we have
$$ \mbox{Ad}(F(a))\psi_{f(a)}F_{i(a)} = F_{t(a)}\psi_a,$$
    \item for every pair $(a,b)$ of composable edges of $\cY$, we have
$$ F_{t(a)}(g_{a,b})F(ab) = F(a)\psi_{f(a)}(F(b))g_{f(a),f(b)}.$$
   \end{enumerate}
\end{itemize}
If all the $F_\sigma$ are isomorphisms, $F$ is called a \textit{local isomorphism}. If in addition $f$ is a simplicial isomorphism, $F$ is called an \textit{isomorphism}.

A \textit{morphism from $G(\cY)$ to a group $G$} is a morphism of complexes of groups from $G(\cY)$ to the scwol with a unique vertex and no edge, and with $G$ as its (unique) local group. Equivalently, it consists of a homomorphism $F_\sigma: G_\sigma \ra G$ for every $\sigma \in V(\cY)$ and an element $F(a) \in G$ for each $a \in E(\cY)$ such that:
\begin{itemize}
 \item for every $a \in E(\cY)$, we have $F_{t(a)} \psi_a = \mbox{Ad}(F(a)) F_{i(a)}$,
 \item for every pair $(a,b)$ of composable edges of $\cY$, we have $F_{t(a)}(g_{a,b})F(ab)=F(a)F(b)$.
\end{itemize}
\end{definition}

\subsection{Developability and non-positively curved complexes of groups.}

\begin{definition}[Complex of groups associated to an action without inversion of a group on a simplicial complex \cite{BridsonHaefliger}]
 Let $G$ be a group acting without inversion by simplicial isomorphisms on a simplicial complex $X$, let $Y$ be the quotient space and $p:X \ra Y$ the natural projection. Up to a barycentric subdivision, we can assume that $p$ restricts to an embedding on every simplex, yielding a simplicial structure on $Y$. Let $\cY$ be the simplicial scwol associated to $Y$.

For each vertex $\sigma$ of $\cY$, choose a simplex $\widetilde{\sigma}$ of $X$ such that $p(\widetilde{\sigma})=\sigma$. As $G$ acts without inversion on $X$, the restriction of $p$ to any simplex of $X$ is a homeomorphism onto its image. Thus, to every simplex $\sigma'$ of $Y$ contained in $\sigma$, there is a unique $\tau$ of $X$ and contained in $\widetilde{\sigma}$, such that $p(\tau) = \sigma'$. To the edge $a= (\sigma', \sigma)$ of $\cY$ we then choose an element $h_a \in G$ such that $h_a.\tau = \widetilde{\sigma}'$. A \textit{complex of groups $G(\cY)=(G_\sigma, \psi_a, g_{a,b})$ over $Y$ associated to the action of $G$ on $X$} is given by the following:
\begin{itemize}
 \item for each vertex $\sigma$ of $\cY$, let $G_\sigma$ be the stabiliser of $\widetilde{\sigma}$, 
 \item for every edge $a$ of $\cY$, the homomorphism $ \psi_a: G_{i(a)} \ra G_{t(a)}$ is defined by 
$$ \psi_a(g) = h_agh_a^{-1},$$
 \item for every pair $(a,b)$ of composable edges of $\cY$, define
$$g_{a,b} = h_ah_bh_{ab}^{-1}.$$
\end{itemize}
Moreover, there is an associated morphism $F=(F_\sigma, F(a))$ from $G(\cY)$ to $G$, where $F_\sigma: G_\sigma \ra G$ is the natural inclusion and $F(a) = h_a$.
\label{inducedcomplexofgroups}
\end{definition}

\begin{definition}[Developable complex of groups]
 A complex of groups over a simplicial complex $Y$ is \textit{developable} if it is isomorphic to the complex of groups associated to an action without inversion on a simplicial complex.
\end{definition}
 
Unlike in Bass--Serre theory, not every complex of groups is developable. Checking whether or not a complex of groups is developable is a non trivial problem in general. We have the following characterisation of developability: 

\begin{thm}[Theorem III.$\cC$.2.13, Corollary III.$\cC$.2.15, Theorem III.$\cC$.3.13 and Corollary III.$\cC$.3.15 of \cite{BridsonHaefliger}] Let $G(\cY) = (G_\sigma, \psi_a, g_{a,b})$ be a complex of groups over a simplicial complex $Y$. 
\begin{itemize}
\item $G(\cY)$ is developable if and only if there exists a group $G$ and a morphism  $F:G(\cY) \ra G$ that is injective on the local groups. 
\item To each such morphism $F: G(\cY) \ra G$ that is injective on the local groups, one can associate an action of $G$ on a simplicial complex, called the \textit{development} associated to $F$, such that the induced complex of groups is isomorphic to $G(\cY)$. 
\item If $G(\cY)$ is developable, then there exists a group $G$ and a morphism $F: G(\cY) \ra G$ which is injective on the local groups and such that the associated development is connected and simply-connected. Such a group, which is unique up to isomorphism, is called the \textit{fundamental group} of $G(\cY)$. The development associated to such a morphism, which is unique up to equivariant isomorphism, is called the \textit{universal cover} of $G(\cY)$. \qed
\end{itemize}
\label{developabilityalgebraic}
\end{thm}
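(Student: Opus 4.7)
The plan is to prove the three bullets in order, handling the backward direction of the first bullet and the second bullet simultaneously via an explicit ``development'' construction, and obtaining the third bullet by exhibiting a universal morphism into which all others factor. The forward implication of the first bullet follows directly from Definition \ref{inducedcomplexofgroups}: if $G(\cY)$ is isomorphic to the complex of groups associated with an action of some group $G$ on a simplicial complex $X$, then the canonical morphism $F=(F_\sigma, F(a))$ sends each $G_\sigma$ to the stabiliser in $G$ of the chosen lift $\widetilde{\sigma}$ via inclusion, so $F$ is injective on local groups.

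For the converse and the second bullet, given a morphism $F: G(\cY) \ra G$ injective on local groups, I would construct a scwol $D(F)$ with
$$V(D(F)) := \bigsqcup_{\sigma \in V(\cY)} G/F_\sigma(G_\sigma), \qquad E(D(F)) := \bigsqcup_{a \in E(\cY)} G/F_{i(a)}(G_{i(a)}),$$
with initial and terminal maps $i([g,a]) := [g, i(a)]$ and $t([g,a]) := [gF(a)^{-1}, t(a)]$, and composition of pairs of edges with matching endpoints defined using the twisting elements $g_{a,b}$ together with the morphism elements $F(a), F(b), F(ab)$. The morphism identities $\mathrm{Ad}(F(a))F_{i(a)} = F_{t(a)}\psi_a$ and $F_{t(a)}(g_{a,b})F(ab) = F(a)F(b)$, combined with the cocycle and compatibility conditions defining $G(\cY)$, ensure that these maps descend to cosets and satisfy the scwol axioms; injectivity of each $F_\sigma$ guarantees that $F_\sigma(G_\sigma) \cong G_\sigma$, so local groups will be recovered as stabilisers of lifts. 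The group $G$ acts on $D(F)$ by left multiplication on the first coordinate, without inversion, with quotient $\cY$ and with stabiliser of $[1,\sigma]$ equal to $F_\sigma(G_\sigma) \cong G_\sigma$; a routine unwinding of Definition \ref{inducedcomplexofgroups} then identifies the induced complex of groups on the quotient with $G(\cY)$ through $F$.

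For the third bullet, define $\pi_1(G(\cY))$ by the presentation with generators $\bigsqcup_{\sigma} G_\sigma \sqcup \{[a] : a \in E(\cY)\}$ subject to the local group relations, the conjugation relations $[a]\, g\, [a]^{-1} = \psi_a(g)$ for $g \in G_{i(a)}$, the twisting relations $[a][b] = g_{a,b}[ab]$ for composable pairs, and $[a]=1$ for $a$ in a chosen maximal tree of the $1$-skeleton of $\cY$. By construction, any morphism $F : G(\cY) \ra G$ factors uniquely through a group homomorphism $\phi_F : \pi_1(G(\cY)) \ra G$, and writing $\iota : G(\cY) \ra \pi_1(G(\cY))$ for the canonical morphism, $F_\sigma = \phi_F \circ \iota_\sigma$. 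If $G(\cY)$ is developable, the first bullet supplies an $F$ injective on local groups, forcing $\iota_\sigma$ itself to be injective. Applying the construction of the previous paragraph to $\iota$ then yields the claimed universal cover; simple-connectedness follows because the imposed relations are precisely those needed to fill in every combinatorial edge-loop in the associated geometric realisation, and uniqueness up to (equivariant) isomorphism follows from the universal property of $\pi_1(G(\cY))$.

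The main obstacle is the verification in the second bullet: showing that the composition law on $D(F)$ descends to a well-defined scwol structure requires a careful interleaving of the twisting elements $g_{a,b}$ with the edge elements $F(a)$, and recovering $G(\cY)$ (not merely an abstractly isomorphic complex of groups, but the isomorphism realised through $F$) from the action on $D(F)$ demands a coherent choice of lifts compatible with $F$. A secondary, more topological difficulty is the simple-connectedness statement, for which one must argue that the chosen relations suffice to null-homotope every edge-loop in the geometric realisation of $D(\iota)$.
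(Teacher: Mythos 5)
The paper does not prove this theorem; it is quoted verbatim as a citation from Bridson--Haefliger (Theorem III.$\mathcal{C}$.2.13, Corollary III.$\mathcal{C}$.2.15, Theorem III.$\mathcal{C}$.3.13 and Corollary III.$\mathcal{C}$.3.15), marked $\qed$ with no argument. So there is no internal proof to compare against, and the right benchmark is the reference itself.

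Your sketch faithfully reconstructs the Bridson--Haefliger argument. The forward implication is exactly the observation that Definition \ref{inducedcomplexofgroups} already produces a morphism with injective local maps. Your development $D(F)$ --- vertices $\bigsqcup_\sigma G/F_\sigma(G_\sigma)$, edges $\bigsqcup_a G/F_{i(a)}(G_{i(a)})$, with $i([g,a])=[g,i(a)]$ and $t([g,a])=[gF(a)^{-1},t(a)]$ --- is precisely the scwol $D(\cY,F)$ of III.$\mathcal{C}$.2.13, and the composition $(g,a)(g',b)=(g',ab)$ checks out against the morphism and cocycle identities (one verifies $gF(a)^{-1}F_{t(a)}(G_{t(a)}) = g'F(ab)^{-1}F_{t(a)}(G_{t(a)})$ using $F_{t(a)}\psi_a = \mathrm{Ad}(F(a))F_{i(a)}$ and $F_{t(a)}(g_{a,b})F(ab)=F(a)F(b)$). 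Your presentation of $\pi_1(G(\cY))$ is the quotient of the group $FG(\cY)$ of III.$\mathcal{C}$.3.1 by the edges of a maximal tree in $|\cY|$, and the factorisation argument showing $\iota_\sigma$ injective whenever some $F_\sigma$ is injective is correct and is the standard route. The two difficulties you flag --- verifying the scwol axioms after passing to cosets, and deducing simple-connectedness of $D(\iota)$ from the presentation --- are the genuine content of the reference (respectively Theorem III.$\mathcal{C}$.2.13 and the Seifert--van Kampen type argument in III.$\mathcal{C}$.3.13), so acknowledging them as nontrivial rather than hand-waving them away is appropriate for a sketch. One point worth making explicit, since the paper works with simplicial complexes rather than raw scwols: one should also check that $D(F)$ is again the scwol associated to a simplicial complex (so that the development is genuinely a simplicial complex and not merely a polyhedral-type object), which follows because the $G$-action and the projection to $\cY$ respect the face poset.
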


There exists an explicit description of the fundamental group of a complex of groups which generalises the usual definition of the fundamental group of a space in terms of loops. This construction being rather technical, we omit it and refer to \cite[Definition III.$\cC$.3.5]{BridsonHaefliger} for details. We will just use the following functoriality property: 

\begin{prop}
Let $G(\cY)$ be a developable complex of groups over a simplicial complex $Y$ and $v$ be a vertex of $Y$. Then there exists a group $\pi_1(G(\cY), v)$ together with a morphism $F_{G(\cY),v}: G(\cY) \ra \pi_1(G(\cY), v)$ which is injective on the local groups, such that the associated development is connected and simply connected (i.e. $\pi_1(G(\cY), v)$ is a fundamental group of $G(\cY)$). 

Moreover, if $G(\cY), G(\cY')$ are developable complexes of groups over simplicial complexes $Y, Y'$, $F: G(\cY) \ra G(\cY')$ is a morphism of complexes of groups over a simplicial map $f:Y \ra Y'$ and $v$ is a vertex of $Y$, then there is a morphism $F_*: \pi_1(G(\cY), v) \ra \pi_1(G(\cY'), f(v))$ such that for every simplex $\sigma$ of $Y$, the following diagram commutes: 
$$ \xymatrix{
     \pi_1(G(\cY), v)  \ar[r]_{}^{F_*} & \pi_1(G(\cY'), f(v)) \\
    G_\sigma  \ar[u]_{(F_{G(\cY),v})_\sigma} \ar[r]_{F_\sigma} & G_{f(\sigma)} \ar[u]_{(F_{G(\cY'),f(v)})_{f(\sigma)}} . \\
  }$$\\ \qed
\end{prop}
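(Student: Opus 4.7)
The first sentence of the proposition is a direct restatement of the existence portion of Theorem \ref{developabilityalgebraic}: since $G(\cY)$ is developable, that theorem produces a fundamental group $\pi_1(G(\cY), v)$ together with a morphism $F_{G(\cY), v}: G(\cY) \ra \pi_1(G(\cY), v)$ that is injective on local groups and whose associated development is connected and simply connected. So the work is concentrated in the functoriality assertion.

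For that, my plan is to use the explicit presentation of $\pi_1(G(\cY), v)$ recalled in \cite{BridsonHaefliger}. After fixing a maximal tree $T$ in the $1$-skeleton of $\cY$ rooted at $v$, this group admits a presentation whose generators are the elements of the local groups $G_\sigma$ together with a symbol $k_a$ for each edge $a$ of $\cY$, and whose relations are the relations inside each $G_\sigma$, the edge relations $\psi_a(g) = k_a g k_a^{-1}$ for $g \in G_{i(a)}$, the cocycle relations $k_a k_b = g_{a,b}\, k_{ab}$ for composable $(a,b)$, and the tree relations $k_a = 1$ for $a \in T$. I would construct $F_*$ by declaring it on these generators. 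After fixing a maximal tree $T'$ in $\cY'$ rooted at $f(v)$ and, for each vertex $\sigma$ of $\cY$, an element $w_\sigma \in \pi_1(G(\cY'), f(v))$ representing the edge path in $T'$ from $f(v)$ to $f(\sigma)$, set
\[
F_*(g) := \bigl(F_{G(\cY'),f(v)}\bigr)_{f(\sigma)}\!\bigl(F_\sigma(g)\bigr) \qquad \text{for } g \in G_\sigma,
\]
and $F_*(k_a) := w_{t(a)}^{-1}\cdot \bigl(F_{G(\cY'),f(v)}\bigr)_{t(f(a))}\!\bigl(F(a)\bigr) \cdot k_{f(a)} \cdot w_{i(a)}$. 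The two compatibility conditions in the definition of a morphism of complexes of groups are precisely what is needed to ensure that the edge and cocycle relations are preserved; the local-group relations pass because each $F_\sigma$ is a homomorphism, and the tree relations pass because images of tree edges reduce, modulo the translation elements $w_\sigma$, to trivial edge generators of $T'$. Commutativity of the stated diagram on each $G_\sigma$ is built into the definition of $F_*$.

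The main obstacle I anticipate is the basepoint and maximal-tree bookkeeping: since $F$ need not map $T$ into $T'$, the translation elements $w_\sigma$ must be threaded through the definition, and one must check independence of this choice up to the defining relations. A cleaner alternative I would pursue in parallel is to exploit universality directly: the composition $F_{G(\cY'), f(v)} \circ F: G(\cY) \ra \pi_1(G(\cY'), f(v))$ is a morphism from $G(\cY)$ to a group, injective on local groups (because each $F_\sigma$ is, by hypothesis, and $F_{G(\cY'),f(v)}$ is injective on local groups). Applying Theorem \ref{developabilityalgebraic} again, the universal property of the simply connected development associated to $F_{G(\cY), v}$ produces a canonical homomorphism $F_*: \pi_1(G(\cY), v) \ra \pi_1(G(\cY'), f(v))$ through which this composition factors. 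Commutativity of the displayed square then holds by construction, since both $F_* \circ (F_{G(\cY), v})_\sigma$ and $(F_{G(\cY'), f(v)})_{f(\sigma)} \circ F_\sigma$ agree with the restriction to $G_\sigma$ of the composed morphism.
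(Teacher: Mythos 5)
The paper does not prove this proposition at all: it is stated with an immediate \verb|\qed|, with the preceding paragraph explicitly deferring to \cite[Definition III.$\cC$.3.5]{BridsonHaefliger} for the ``rather technical'' explicit description of the fundamental group that underlies the functoriality claim. Your proposal therefore supplies a proof that the paper omits, and your first approach is precisely the one Bridson--Haefliger's framework supports: use the presentation of $\pi_1(G(\cY),v)$ by local-group elements and edge symbols $k_a$ modulo the local, edge, cocycle, and maximal-tree relations, define $F_*$ on generators, and check relation-preservation using the two compatibility conditions in the definition of a morphism. Your diagnosis that the only real friction is the basepoint/maximal-tree bookkeeping (since $F$ need not carry $T$ into $T'$, forcing the conjugating elements $w_\sigma$) is also accurate, and your definition of $F_*(k_a)$ is the right shape to absorb that.

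One caveat about the ``cleaner alternative.'' The argument is morally correct, but as phrased it invokes ``the universal property of the simply connected development,'' which is not what Theorem~\ref{developabilityalgebraic} as recorded in the paper actually asserts; that theorem only gives existence and uniqueness up to isomorphism of the pair (fundamental group, morphism), not a factorisation property for arbitrary morphisms $G(\cY)\ra H$. The universal property you actually need is that of $(\pi_1(G(\cY),v),\,F_{G(\cY),v})$ among \emph{all} morphisms from $G(\cY)$ to groups (not just those with simply connected development), i.e.\ that every morphism $G(\cY)\ra H$ factors uniquely through $F_{G(\cY),v}$. That property is true, but it is established via exactly the explicit presentation from your first approach; the development itself is a space, not a group, and does not directly hand you the homomorphism. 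So the second argument is a restatement of the first rather than an independent shortcut, and if you want it to stand alone you should state the universal property of $\pi_1(G(\cY),v)$ explicitly and cite the presentation to justify it.
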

We now recall a geometric condition that ensures the developability of a given complex of groups. From now on, we assume that $Y$ is endowed with a piecewise-Euclidean structure. 

\begin{definition}[Local complex of groups] Let $v$ be a vertex of $Y$. We denote by $G(\cY_v)$ the complex of groups over the star $\mbox{St}(v)$ of $v$ induced by $G(\cY)$ in the obvious way.
\end{definition}

We have the following result:

\begin{prop}[Proposition III.$\cC$. 4.11 of \cite{BridsonHaefliger}] For every vertex $v$ of $Y$, the local complex of groups $G(\cY_v)$ is developable and its fundamental group is isomorphic to $G_v$. 
 Denote by $X_v$ its universal cover, called the \textit{local development} at $v$. Then the piecewise-Euclidean structure on the star $\mbox{St}(v)$ yields a piecewise-Euclidean structure with finitely many isometry types of simplices on $X_v$ such that the $G_v$-equivariant projection $X_v \ra \mbox{St}(v)$ restricts to an isometry on every simplex.\qed
\end{prop}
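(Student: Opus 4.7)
The plan is to apply the developability criterion of Theorem \ref{developabilityalgebraic} by constructing an explicit morphism $F: G(\cY_v) \ra G_v$ which is injective on local groups. The guiding observation is that every simplex $\sigma$ of $\mbox{St}(v)$ contains $v$, so in the associated scwol $\cY_v$ one has for each $\sigma \neq v$ a distinguished edge $a_\sigma := (v, \sigma)$ satisfying $i(a_\sigma)=\sigma$ and $t(a_\sigma)=v$. These edges yield canonical embeddings $\psi_{a_\sigma}: G_\sigma \hookrightarrow G_v$ into the top local group, and moreover $a_\sigma \cdot a = a_{\sigma'}$ for every edge $a = (\sigma, \sigma')$ of $\cY_v$ with $\sigma \neq v$. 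Setting
$$ F_v := \mathrm{id}_{G_v}, \quad F_\sigma := \psi_{a_\sigma} \text{ for } \sigma \neq v, \quad F(a_\sigma) := 1, \quad F(a) := g_{a_\sigma, a} \text{ for } a = (\sigma, \sigma') \text{ with } \sigma \neq v,$$
a direct manipulation using the cocycle identity $\psi_a(g_{b,c}) g_{a, bc} = g_{a,b}\, g_{ab, c}$ (applied to triples of the form $(a_\sigma, a, b)$) shows that the two compatibility axioms required of a morphism to a group are satisfied; injectivity on local groups is immediate from the injectivity of each $\psi_{a_\sigma}$.

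By Theorem \ref{developabilityalgebraic}, $G(\cY_v)$ is therefore developable and $F$ gives rise to an action of $G_v$ on an associated development $X_v$. To identify $G_v$ with the fundamental group $\pi_1(G(\cY_v), v)$, it suffices to check that $X_v$ is simply connected. The geometric point is that $\mbox{St}(v)$ is the closed cone on the link $\mbox{Lk}(v)$ with apex $v$. Since $F_v = \mathrm{id}_{G_v}$, the stabiliser of the preimage of $v$ in $X_v$ equals all of $G_v$, so this preimage consists of a single point fixed by $G_v$; and since every simplex of $\mbox{St}(v)$ contains $v$, every simplex of $X_v$ contains this distinguished fixed point. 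Thus $X_v$ itself has the structure of a cone with apex the preimage of $v$, and is in particular contractible, which identifies $G_v$ with $\pi_1(G(\cY_v),v)$.

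Finally, the piecewise-Euclidean structure on $X_v$ is obtained by pulling back the metric on each simplex of $\mbox{St}(v)$ through the projection $X_v \ra \mbox{St}(v)$. Since by construction this projection restricts to a combinatorial bijection between each closed simplex of $X_v$ and its image, this yields a well-defined piecewise-Euclidean metric making the projection an isometry on every simplex. The $G_v$-orbits of simplices in $X_v$ are in bijection with the simplices of $\mbox{St}(v)$, so the number of isometry types of simplices in $X_v$ equals the corresponding number in $\mbox{St}(v)$, which is finite.

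The main conceptual step is the simple connectedness (equivalently, contractibility) of $X_v$: while intuitive from the cone picture, it requires unpacking the explicit description of the development sufficiently to see that the equivariant coning structure on $\mbox{St}(v)$ lifts to $X_v$. Once this is established, the remaining statements about developability, identification of the fundamental group, and the piecewise-Euclidean structure follow by routine verification from the combinatorial nature of the projection $X_v \ra \mbox{St}(v)$.
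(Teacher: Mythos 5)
The paper gives no proof of this statement: it is cited directly from Proposition III.$\cC$.4.11 of Bridson--Haefliger, with a \qed following the statement. Your proposal therefore supplies a self-contained proof of what the paper treats as a quoted reference result.

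That said, your proof is correct and, as far as I can see, follows the same strategy as the proof in Bridson--Haefliger: define the morphism $F: G(\cY_v) \to G_v$ by pushing each local group $G_\sigma$ into $G_v$ along the unique edge $a_\sigma = (v,\sigma)$, set $F(a_\sigma)=1$ and $F(a)=g_{a_\sigma,a}$, and check the two axioms by applying the cocycle relation to the triple $(a_\sigma, a, b)$, using the identity $a_\sigma a = a_{\sigma'}$ whenever $a=(\sigma,\sigma')$. Your verification that the second axiom reduces to $\psi_{a_\sigma}(g_{a,b})\,g_{a_\sigma,ab} = g_{a_\sigma,a}\,g_{a_{\sigma'},b}$, which is exactly the cocycle identity since $a_\sigma a = a_{\sigma'}$, is accurate. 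Injectivity on local groups is immediate. For the fundamental group, the key geometric observation — that the preimage of $v$ in the development is a single vertex fixed by all of $G_v$ and lying in every simplex of $X_v$, making $X_v$ the closed star of that vertex, hence a cone over the link, hence contractible — is the decisive point, and your argument is sound: since $F_v$ is surjective (indeed the identity), there is a unique lift $\tilde v$ of $v$, and since every simplex of $\mathrm{St}(v)$ contains $v$, every simplex of $X_v$ contains $\tilde v$, so $X_v = \tilde v * \mathrm{Lk}(\tilde v)$. The piecewise-Euclidean part is routine as you say. In short: correct, essentially the textbook argument, and a reasonable thing to have spelled out given that the paper merely cites it.
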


\begin{definition}[non-positively curved complex of groups] We say that $G(\cY)$ is \textit{non-positively curved} if each local development $X_v$ with the simplicial metric coming from the piecewise-Euclidean structure of $Y$ is a CAT(0) space.
\end{definition}

\begin{thm}[Theorem III.$\cC$.4.17 of \cite{BridsonHaefliger}]
A non-positively curved complex of groups is developable. \qed
\label{nonpositivelycurveddevelopable}
\end{thm}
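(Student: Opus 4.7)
The plan is to invoke the algebraic characterisation of developability from Theorem \ref{developabilityalgebraic}: it is enough to produce a group $G$ together with a morphism $F: G(\cY) \ra G$ injective on every local group. I will construct such a morphism geometrically, by building a candidate universal cover $\widetilde{X}$ and taking for $G$ its natural deck transformation group.

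The first step is the construction of $\widetilde{X}$, obtained by gluing the local developments $X_v$ --- which are CAT(0) by hypothesis --- along the combinatorial data of the scwol. Formally, one considers equivalence classes of pairs $[g, \sigma]$, where $\sigma \in V(\cY)$ and $g$ records a choice of lifts coming from the local groups along an edge-path in $\cY$ ending at $\sigma$, modulo the relations generated by the injections $\psi_a$ and the twisting elements $g_{a,b}$. The cocycle identity $\psi_a(g_{b,c})\, g_{a,bc} = g_{a,b}\, g_{ab,c}$ is exactly what guarantees that these relations are globally coherent, and the construction arranges that above every vertex $v$ of $Y$ a neighbourhood of any lift of $v$ is canonically isometric to a neighbourhood of the basepoint in the local development $X_v$.

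The second step is a local-to-global CAT(0) argument. Equip $\widetilde{X}$ with the piecewise-Euclidean simplicial structure induced from $Y$. The link at any lift of a vertex $v$ agrees with the link at the basepoint in $X_v$ by construction, hence is CAT(1) since $X_v$ is CAT(0). By Gromov's link condition $\widetilde{X}$ is locally CAT(0); passing to its topological universal cover and applying the piecewise-Euclidean Cartan--Hadamard theorem of \cite{BridsonPhD}, one obtains a CAT(0) simply-connected space, which we continue to denote $\widetilde{X}$.

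Let $G$ be the group of deck transformations of the covering $\widetilde{X} \ra Y$. The gluing provides canonical maps $F_\sigma : G_\sigma \ra G$ and elements $F(a) \in G$ assembling into a morphism $F : G(\cY) \ra G$. For injectivity on local groups, observe first that $F_v$ is injective for every vertex $v$ of $Y$, because $G_v$ acts faithfully on the embedded copy of $X_v \subset \widetilde{X}$; for a general simplex $\sigma$ with vertex $v$ and edge $a=(v,\sigma) \in E(\cY)$, the morphism axiom $F_v \psi_a = \mbox{Ad}(F(a))\, F_\sigma$, combined with the fact that $\psi_a$ is always injective in a complex of groups, forces $F_\sigma$ to be injective as well. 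This is exactly the hypothesis of Theorem \ref{developabilityalgebraic}, so $G(\cY)$ is developable. The main obstacle is the careful bookkeeping required to define $\widetilde{X}$ consistently; once this is in place, all the genuine geometric content passes through the CAT(1) link condition at each vertex, after which the piecewise-Euclidean Cartan--Hadamard theorem does the heavy lifting.
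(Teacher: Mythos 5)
The paper does not prove this theorem --- it cites Theorem III.$\cC$.4.17 of \cite{BridsonHaefliger} with a \qed and treats it as a black box --- so the relevant comparison is with the argument in that reference, not one in the paper itself.

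Your outline has the right shape but asserts, without justification, the single claim that carries all the weight: that ``the construction arranges that above every vertex $v$ of $Y$ a neighbourhood of any lift of $v$ is canonically isometric to a neighbourhood of the basepoint in the local development $X_v$.'' The cocycle identity cannot be what guarantees this, because that identity holds for \emph{every} complex of groups, developable or not. The direct-limit construction you describe (pairs $[g,\sigma]$ modulo the relations generated by the $\psi_a$ and $g_{a,b}$) always produces a group $\hat G$ and a morphism $\hat F:G(\cY)\to\hat G$, but $\hat F$ need not be injective on local groups; equivalently, the space built from it can be a proper quotient of what you want near each lift of $v$, with links that are quotients of the links in $X_v$ rather than equal to them. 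Your later justification for injectivity --- that ``$G_v$ acts faithfully on the embedded copy of $X_v\subset\widetilde X$'' --- is circular, since the existence of that embedding is equivalent to the injectivity you are trying to establish. The genuine content of the theorem is that the CAT(0) hypothesis on the local developments supplies the missing local-to-global mechanism; in \cite{BridsonHaefliger} this is carried out by propagating the local developments consistently using uniqueness of geodesics in CAT(0) spaces, and that is the step your sketch skips. As written, the Gromov link criterion and the Cartan--Hadamard theorem are applied to a space whose links you have not actually shown agree with the links in the $X_v$, so the argument does not close.
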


\section{Trees of complexes of groups.}
\label{trees}

This section presents the main algebraic construction of this article. Given a simplicial complex $Y$, subcomplexes $(Y_i)$ whose interiors cover $Y$ and such that the nerve of the associated open cover is a tree, and a family of complexes of groups $G(\cY_i)$ over $Y_i$, we explain how one can glue them together to obtain a complex of groups $G(\cY)$ over $Y$. This procedure can be thought as making ``trees of complexes of groups''. In order to lighten notations, we will only detail the case of a cover consisting of two subcomplexes with a connected intersection.

\subsection{Immersions of complexes of groups.}

\begin{definition}[Immersion of complexes of groups.] Let $G(\cY_1)$ and $G(\cY_2)$ be two complexes of groups over two simplicial complexes $Y_1$ and $Y_2$ and $F: G(\cY_1) \ra G(\cY_2)$ a morphism of complexes of groups over a simplicial morphism $f: Y_1 \ra Y_2$. We say that $F$ is an \textit{immersion} if $f$ is a simplicial immersion and all the local maps $F_\sigma$ are injective.
\end{definition}

Note that if in addition both complexes of groups are assumed to be developable, then the simplicial immersion $f$ lifts to an equivariant simplicial immersion between their universal coverings. \\

For $i=1, 2$, let $X_i, Y_i$ be simplicial complexes, $G_i$ a group acting without inversion on $X_i$ by simplicial isomorphisms, $p_i: X_i \ra Y_i$ a simplicial map factoring through $X_i/G_i$ and inducing a simplicial isomorphism $X_i/G_i \simeq Y_i$. 
Suppose we are given a simplicial immersion $f: Y_1 \ra  Y_2$, a homomorphism $\alpha: G_1 \ra G_2$ and an equivariant simplicial immersion $\overline{f}: X_1 \ra X_2$ over $f$ such that for every simplex $\sigma$ of $X_1$, the induced map $\alpha: \mbox{Stab}(\sigma) \ra \mbox{Stab}(\bar{f}(\sigma))$ is a monomorphism. Recall that from the action of $G_i$ on $X_i$, it is possible to define a complex of groups over $Y_i$ that encodes it. We now explain how to define such complexes of groups $G(\cY_1)$ and $G(\cY_2)$ over $Y_1$ and $Y_2$, such that there is an immersion $G(\cY_1) \ra G(\cY_2)$. 

Recall that to define a complex of groups over $Y_1$ induced by the action of $G_1$ on $X_1$, we had to associate to every vertex $\sigma$ of $\cY_1$ a simplex $\overline{\sigma}$ of $X_1$, and to every edge $a$ of $\cY_1$ an element $h_a$ of $G_1$ (see Definition \ref{inducedcomplexofgroups}). Assume we have made such choices to define $G(\cY_1)$. We now make such choices for $Y_2$.

\begin{itemize}
\item Let $\sigma'$ be a vertex of $\cY_2$, which we identify with the associated simplex of $Y_2$. If $\sigma' = f(\sigma)$ for a simplex $\sigma$ of $Y_1$, we choose $\overline{\sigma'} = \overline{f}(\overline{\sigma})$. Otherwise, we pick an arbitrary lift of $\sigma'$. 
\item Let $a'$ be an edge of $\cY_2$. If $a'= f(a)$ for an edge $a$ of $\cY_1$, we choose $h_{a'}= \alpha(h_a)$. Otherwise, we choose an arbitrary element $h_{a'}$ that satisfies the conditions described in Definition \ref{inducedcomplexofgroups}. 
\end{itemize}
This yields a complex of groups $G(\cY_2)$ over $Y_2$. We now define a morphism of complex of groups $F=(F_\sigma, F(a)): G(\cY_1) \ra G(\cY_2)$ over $f$ as follows: 
\begin{itemize}
\item The maps $F_\sigma: G_\sigma \ra G_{f(\sigma)}$ are the monomorphisms $\alpha: \mbox{Stab}(\overline{\sigma}) \ra \mbox{Stab}(\overline{f}(\overline{\sigma}))$,
\item The elements $F(a)$ are all trivial.
\end{itemize} 
It is straightforward to check that this indeed yields an immersion $F=(F_\sigma, F(a)): G(\cY_1) \ra G(\cY_2)$ over $f$.

\begin{definition}
We call the immersion $F=(F_\sigma, F(a)): G(\cY_1) \ra G(\cY_2)$ over $f$ an immersion \textit{associated to} $(Y_1 \overset{f}{\ra} Y_2, X_1 \overset{\bar{f}}{\ra} X_2, G_1 \overset{\alpha}{\ra} G_2)$.
\end{definition}

\subsection{Amalgamation of non-positively curved complexes of groups.}

In what follows, $Y$ is a finite simplicial complex, $Y_1, Y_2$ are subcomplexes of $Y$ whose interiors cover $Y$, and $Y_0= Y_1 \cap Y_2$. We assume that $Y_0$ is connected. We further assume that, for $i=0,1, 2$, we are given a simplicial complex $X_i$, a $G_i$ a group acting without inversion on $X_i$, $p_i: X_i \ra Y_i$ a simplicial map factoring through $X_i/G_i$ and inducing a simplicial isomorphism $X_i/G_i \simeq Y_i$. 
We assume that, for $i=1,2$, we are given a homomorphism $\alpha_i: G_0 \ra G_i$ and an equivariant simplicial immersion $\bar{f_i}: X_0 \ra X_i$ over the inclusion $f_i: Y_0 \hra Y_i$ such that for every simplex $\sigma$ of $X_0$, the induced map $\alpha_i: \mbox{Stab}(\sigma) \ra \mbox{Stab}(\bar{f_i}(\sigma))$ is an isomorphism.\\

\begin{figure}[H]
\center
\scalebox{0.8}{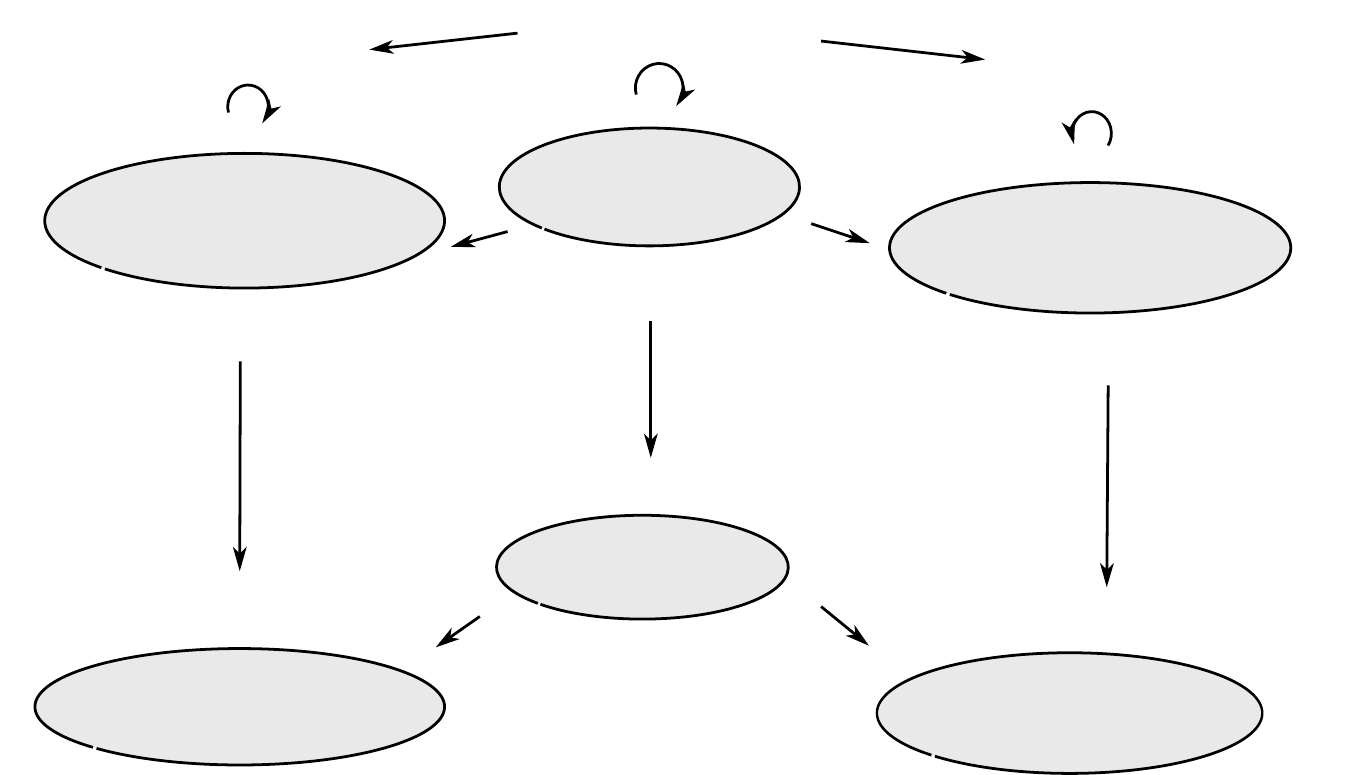}
\caption{A diagram of maps.}
\label{diagrammaps}
\end{figure}

By the results of the previous paragraph, we can choose complexes of groups $G(\cY_i)$ over $Y_i$ associated to these actions in such a way that there are immersions $G(\cY_0) \overset{F_i}{\ra} G(\cY_i)$ associated to $(Y_0 \overset{f_i}{\ra} Y_i, X_0 \overset{\bar{f_i}}{\ra} X_i, G_0 \overset{\alpha_i}{\ra} G_i)$. Note that the local maps $(F_i)_\sigma$ are isomorphisms.

We now use these immersions to amalgamate $G(\cY_1)$ and $G(\cY_2)$ along $G(\cY_0)$. So as to emphasise which complex of groups is under consideration, we will indicate it as a superscript (see below). We define a complex of groups $G(\cY)$ over $Y$ as follows: 

\begin{itemize}
\item If $\sigma$ is a vertex of $\cY_0$, we set $G^{Y}_\sigma = G^{Y_0}_\sigma$. 

If $\sigma$ is a vertex of $\cY_i \setminus \cY_0$, we set $G^{Y}_\sigma = G^{Y_i}_\sigma$.
\item If $a$ is an edge of $\cY_0$, we set $\psi^{Y}_a = \psi^{Y_0}_a$. 

If $a$ is an edge of $\cY_i \setminus \cY_0$, we set $\psi^{Y}_a = \psi^{Y_i}_a$. 

If $a$ is an edge of $\cY_i$ such that $i(a)$ is a vertex of $\cY_i \setminus \cY_0$ and $t(a)$ is a vertex of $\cY_0$, we set $\psi^{Y}_a = \big( (F_i)_{t(a)} \big)^{-1} \circ \psi^{Y_i}_a$. 
\item  If $(a,b)$ is a pair of composable edges of $\cY_0$, we set $g^{Y}_{a,b} = g^{Y_0}_{a,b}$. 

If $(a,b)$ is a pair of composable edges of $\cY_i \setminus \cY_0$, we set $g^{Y}_{a,b} = g^{Y_i}_{a,b}$. 

If $(a,b)$ is a pair of composable edges of $\cY$ such that $b$ is not an edge of $\cY_0$ but $t(a)$ is a vertex of $\cY_0$, we set $g^{Y}_{a,b} = \big( (F_i)_{t(a)} \big)^{-1}(g^{Y_i}_{a,b})$. 

\end{itemize}

\noindent It is straightforward to check all the compatibility conditions of a complex of groups.

\begin{definition}[Amalgamation of complexes of groups] We denote by $G(\cY_1) \underset{G(\cY_0)}{\cup} G(\cY_2)$ the previous complex of groups.
\end{definition}

\begin{thm}[Seifert-van Kampen Theorem for complexes of groups, Theorem  III.$\cC$.3.11.(5) of \cite{BridsonHaefliger}] With the same notations as above, the fundamental group of $G(\cY_1) \underset{G(\cY_0)}{\cup} G(\cY_2) $ is isomorphic to the pushout $G_1 \underset{G_0}{*} G_2.$ 

Moreover, let $F: G(\cY_1) \ra G(\cY) $ the inclusion of complex of groups over the inclusion $Y_1 \ra Y$. For every vertex $v$ of $Y_1$, the associated morphism $F_*: \pi_1(G(\cY_1), v) \ra \pi_1(G(\cY), v)$ is conjugated to the projection $G_1 \ra G_1 \underset{G_0}{*} G_2$.\qed
\label{VanKampen}
\end{thm}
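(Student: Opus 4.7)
The plan is to identify the amalgam $G(\cY_1) \underset{G(\cY_0)}{\cup} G(\cY_2)$ as a concrete instance of the Seifert--van Kampen theorem for complexes of groups (Theorem III.$\cC$.3.11.(5) of \cite{BridsonHaefliger}) and read off the identification of $\pi_1$ with the pushout. Equivalently, I would give a direct proof by constructing an explicit morphism $F: G(\cY) \to G_1 \underset{G_0}{*} G_2$, verifying that it is injective on local groups, and showing that its associated development is connected and simply connected; the identification of the fundamental group then follows from Theorem \ref{developabilityalgebraic}.

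First I would define the morphism $F = (F_\sigma, F(a))$. For a vertex $\sigma$ of $\cY_i$, let $F_\sigma$ be the composition of the identification $G^Y_\sigma \cong G^{Y_i}_\sigma = \mbox{Stab}_{G_i}(\overline{\sigma})$ with the canonical map $G_i \to G_1 \underset{G_0}{*} G_2$. For $\sigma \in \cY_0$, the two possible definitions (via $G_1$ and via $G_2$) agree because $\alpha_1$ and $\alpha_2$ both restrict to isomorphisms on $G^{Y_0}_\sigma$ onto the corresponding stabilisers, which are identified in the pushout. For an edge $a$ lying in $\cY_i$, set $F(a)$ to be the image of the chosen element $h_a \in G_i$ in the pushout. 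The compatibility axioms reduce to those for the morphisms $G(\cY_i) \to G_i$ coming from the inducing actions on $X_i$, as long as the edges and composable pairs of edges lie entirely in one $\cY_i$; for an edge $a$ crossing from $\cY_i \setminus \cY_0$ into $\cY_0$, the corrective terms $\bigl( (F_i)_{t(a)} \bigr)^{-1}$ introduced in the definition of $\psi^Y_a$ and $g^Y_{a,b}$ are precisely what is needed so that, once pushed to the pushout, these relations close up (in the pushout, $(F_i)_{t(a)}$ acts as the identity on the image of $G^{Y_0}_{t(a)}$). Injectivity of $F_\sigma$ follows from the injectivity of the compositions $\mbox{Stab}_{G_i}(\overline{\sigma}) \hookrightarrow G_i \hookrightarrow G_1 \underset{G_0}{*} G_2$.

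Next I would construct the associated development $\widetilde{X}$ as a tree of spaces over the Bass--Serre tree $T$ of the splitting $G_1 \underset{G_0}{*} G_2$: to each vertex of $T$ with stabiliser conjugate to $G_i$, attach a translated copy of $X_i$, and glue neighbouring pieces along translated copies of $X_0$ via $\bar{f_1}$ and $\bar{f_2}$. The natural $G_1 \underset{G_0}{*} G_2$-action on $\widetilde{X}$ has quotient $Y$, and unwinding the construction shows that the induced complex of groups over $Y$ is isomorphic to $G(\cY_1) \underset{G(\cY_0)}{\cup} G(\cY_2)$, confirming that $\widetilde{X}$ is the development of $F$. Connectedness of $\widetilde{X}$ follows from connectedness of $T$ and of the pieces $X_0, X_1, X_2$. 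Simple-connectedness is the main technical point: it follows from the contractibility of $T$ combined with a Seifert--van Kampen argument applied to the cover of $\widetilde{X}$ by the translated pieces $g X_i$ (whose pairwise intersections are translates of $X_0$), using simple-connectedness of the $X_i$ themselves. I expect this verification of simple-connectedness of the tree of spaces to be the main obstacle; everything else is essentially bookkeeping dictated by the definition of the amalgamated complex of groups.

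The final assertion, that $F_*$ is conjugate to the canonical inclusion $G_1 \hookrightarrow G_1 \underset{G_0}{*} G_2$, follows from the functoriality of the fundamental group of complexes of groups applied to the inclusion $G(\cY_1) \hookrightarrow G(\cY)$: this inclusion sends each vertex of $Y_1$ to itself and each local group $G^{Y_1}_\sigma$ identically onto its counterpart $G^{Y}_\sigma$ in the amalgam, so after choosing a common basepoint the induced morphism on fundamental groups coincides, up to the basepoint change, with the canonical projection onto the first factor of the pushout.
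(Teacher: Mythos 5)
The paper does not actually prove this theorem; it simply cites Theorem III.$\cC$.3.11.(5) of Bridson--Haefliger (note the \qed at the end of the statement), which establishes the Seifert--van Kampen property combinatorially, directly from the presentation of the fundamental group of a complex of groups. Your proposal instead attempts a direct proof via the developability-based characterisation of the fundamental group from Theorem \ref{developabilityalgebraic}, so it is a genuinely different route. Two issues keep the proposal from being complete.

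First, injectivity of the local maps $F_\sigma$ is not automatic: it requires that the canonical maps $G_i \to G_1 \underset{G_0}{*} G_2$ be injective. The hypotheses in the paper only give that each $\alpha_i$ restricts to an isomorphism on \emph{stabilisers} of simplices of $X_0$; they do not say that $\alpha_i$ is globally injective, and even if it were, an amalgamated product over a non-injective homomorphism need not contain the factor groups as subgroups (consider $\bbZ/2 \ast_{\bbZ} \bbZ/3$ with both maps surjective). In the paper's applications the $\alpha_i$ are monomorphisms and this point is fine, but in the generality of the statement as you have set it up the injectivity needs to be an explicit hypothesis or derived from additional structure. Second, and more substantially, you acknowledge that simple connectedness of the tree of spaces is ``the main obstacle'' but do not actually prove it; since that is precisely what makes the development the \emph{universal} cover and hence identifies the fundamental group, the sketch does not yet constitute a proof. (You also tacitly use that $\pi_1(G(\cY_i)) \cong G_i$, which requires the $X_i$ to be simply connected; this is implicit in the paper's statement and holds in its applications, but it is worth flagging as an assumption.) By contrast, Bridson--Haefliger's combinatorial argument avoids both difficulties because it works directly with the presentation of $\pi_1$ and does not route through developability at all.
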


We now assume in addition that $Y$ comes equipped with a piecewise-Euclidean structure. This endows $X_0, X_1, X_2$ with a piecewise-Euclidean simplicial structure that turns the maps $\overline{f_i}: X_0 \ra X_i$ into local isometries. Let $v$ be a vertex of $Y$. Since the interiors of $Y_1$ and $Y_2$ cover $Y$, the star of $v$ is fully contained in one of these subcomplexes. We thus obtain from Theorem \ref{nonpositivelycurveddevelopable} the following developability theorem: 

\begin{thm} Under the same assumptions as above, if $X_1$ and $X_2$ are CAT(0) for their induced piecewise-Euclidean structure, then $G(\cY_1) \underset{G(\cY_0)}{\cup} G(\cY_2)$ is non-positively curved, hence developable. \qed
\end{thm}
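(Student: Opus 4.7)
The plan is to invoke Theorem \ref{nonpositivelycurveddevelopable}: since non-positive curvature implies developability, it suffices to show that for every vertex $v$ of $Y$, the local development $X_v$ of the amalgamated complex of groups $G(\cY) := G(\cY_1) \underset{G(\cY_0)}{\cup} G(\cY_2)$ at $v$ is CAT(0) with its induced piecewise-Euclidean metric.

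First I would fix a vertex $v$ of $Y$. Since the interiors of $Y_1$ and $Y_2$ cover $Y$, the closed star $\mbox{St}(v)$ lies entirely in some $Y_i$, $i \in \{1,2\}$. The key step is to verify that the local complex of groups of $G(\cY)$ at $v$ is isomorphic, as a complex of groups over $\mbox{St}(v)$, to the local complex of groups of $G(\cY_i)$ at $v$. This follows from a direct inspection of the amalgamation: for each simplex $\sigma \subset \mbox{St}(v)$, the local group $G^Y_\sigma$ is either literally $G^{Y_i}_\sigma$ (when $\sigma \notin Y_0$) or is identified with $G^{Y_i}_\sigma$ via the isomorphism $(F_i)_\sigma$ (when $\sigma \in Y_0$, using that the local maps of the immersion $F_i$ are isomorphisms by construction). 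A similar analysis shows that the injections $\psi^Y_a$ and twisting elements $g^Y_{a,b}$ for cells of $\mbox{St}(v)$ differ from those of $G(\cY_i)$ only by conjugation by the family $\{(F_i)_\sigma\}$, assembling into an explicit isomorphism of complexes of groups. Consequently $X_v$ is isomorphic, as a piecewise-Euclidean simplicial complex, to the local development of $G(\cY_i)$ at $v$.

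Next I would use that $G(\cY_i)$ is developable with fundamental group $G_i$ and universal cover $X_i$. Its local development at $v$ is then canonically identified with the closed star $\mbox{St}(\tilde{v}, X_i)$ of a lift $\tilde{v}$ of $v$ in $X_i$, endowed with its induced piecewise-Euclidean structure. Since $X_i$ is CAT(0) by hypothesis, the link of $\tilde{v}$ in $X_i$ is CAT(1) by the standard link characterisation of CAT(0) piecewise-Euclidean complexes, and therefore its Euclidean cone $\mbox{St}(\tilde{v}, X_i) \cong X_v$ is CAT(0). Hence $G(\cY)$ is non-positively curved, and developability follows from Theorem \ref{nonpositivelycurveddevelopable}.

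The only subtle point is the identification of local complexes of groups in the middle step: one must carefully track the three cases in the amalgamation formulas (edges inside $\cY_0$, inside $\cY_i \setminus \cY_0$, or bridging the two) and check that the resulting data on $\mbox{St}(v)$ is really a reshuffling of $G(\cY_i)$ by the isomorphisms $(F_i)_\sigma$. Once this is in hand, the remainder is standard CAT(0) geometry of piecewise-Euclidean complexes.
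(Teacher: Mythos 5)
Your proposal matches the paper's (implicit) argument exactly: the paragraph immediately preceding the theorem points out that the star of any vertex $v$ lies in some $Y_i$, so the local development of the amalgam at $v$ is that of $G(\cY_i)$, which sits inside the CAT(0) space $X_i$, and then Theorem~\ref{nonpositivelycurveddevelopable} is invoked. You simply spell out the bookkeeping (that the local data of the amalgam over $\mathrm{St}(v)$ is a reshuffle of that of $G(\cY_i)$ by the isomorphisms $(F_i)_\sigma$), which the paper leaves tacit.

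One small imprecision worth flagging: the closed star $\mathrm{St}(\tilde v, X_i)$ with its induced piecewise-Euclidean metric is not literally the Euclidean cone on $\mathrm{Lk}(\tilde v, X_i)$; they agree only in a neighbourhood of $\tilde v$. This does not affect the argument, because the non-positive-curvature condition used in \cite[III.$\cC$.4.16--4.17]{BridsonHaefliger} is local at $\tilde v$ and hence is exactly the link condition $\mathrm{Lk}(\tilde v, X_i)$ being CAT(1), which you correctly deduce from $X_i$ being CAT(0).
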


As stated at the beginning of this section, this theorem generalises directly to the case of a finite complex $Y$ covered by the interiors of a finite family of subcomplexes $(Y_i)$ such that the nerve of the associate open cover is a tree. We give the following particular case, which will be used in the article. 

Let $Y$ be a finite simplicial complex endowed with a piecewise-Euclidean structure, let $Y_0, Y_1, \ldots, Y_n$ be a family of connected subcomplexes of $Y$ whose interiors cover $Y$, and for $i=1, \ldots, n$, let $Y_i' = Y_0 \cap Y_i$. We assume that each $Y_i'$ is non empty and connected. We further assume that for $1 \leq i \neq j \leq n$, we have $Y_i \cap Y_j = \varnothing$. 

For each $Y_i$ (resp. $Y_i'$), we are given a simplicial complex $X_i$ (resp. $X_i'$), a group $G_i$ (resp. $G_i'$) acting without inversion on $X_i$ (resp. $X_i'$), $p_i: X_i \ra Y_i$ (resp. $p_i': X_i' \ra Y_i'$) a simplicial map factoring through $X_i/G_i$) (resp. $X_i'/G_i'$) and inducing a simplicial isomorphism $X_i/G_i \simeq Y_i$ (resp. $X_i'/G_i' \simeq Y_i'$). This yields a piecewise-Euclidean structure on each $X_i$. 
We assume that, for $i=1,\ldots, n$, we are given homomorphisms $\alpha_i: G_i' \ra G_0$ and $\beta_i: G_i' \ra G_i$, an $\alpha_i$-equivariant simplicial immersion $\overline{f_i}: X_i' \ra X_0$ over the inclusion $Y_i' \hra Y_0$ and a $\beta_i$-equivariant simplicial immersion $\overline{g_i}: X_i' \ra X_i$ over the inclusion $Y_i' \hra Y_i$. We finally assume that for every simplex $\sigma$ of $X_i'$, the induced maps $\alpha_i: \mbox{Stab}(\sigma) \ra \mbox{Stab}(\bar{f_i}(\sigma))$ and $\beta_i: \mbox{Stab}(\sigma) \ra \mbox{Stab}(\bar{g_i}(\sigma))$ are isomorphisms.

As before, we can construct induced complexes of groups $G(\cY_i)$ over $Y_i$ and $G(\cY_i')$ over $Y_i'$, along with immersions $G(\cY_i') \ra G(\cY_0)$ and $G(\cY_i') \ra G(\cY_i)$. These complexes of groups can in turn be amalgamated to obtain a complex of groups $G(\cY)$ over $Y$. We get the following: 

\begin{thm}
If each simplicial complex $X_i$, $i=0, \ldots,n$, is CAT(0) for its induced piecewise-Euclidean structure, then $G(\cY)$ is non-positively curved, hence developable.\qed
\label{CartanHadamard}
\end{thm}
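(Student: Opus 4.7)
The plan is to extend the argument for the two-piece case verbatim, relying on the fact that the CAT(0) condition on a complex of groups can be checked locally. By Theorem \ref{nonpositivelycurveddevelopable}, it suffices to show that the amalgamated complex of groups $G(\cY)$ is non-positively curved, i.e.\ that the local development $X_v$ at every vertex $v$ of $Y$ is CAT(0) for its induced piecewise-Euclidean structure.

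The crucial observation is that since the interiors of $Y_0, Y_1, \ldots, Y_n$ cover $Y$, the closed star $\mbox{St}(v)$ of any vertex $v$ is entirely contained in some $Y_{i(v)}$: indeed, $v$ lies in the topological interior of some $Y_i$, and hence so does its open star. I would then check by direct inspection of the amalgamation construction (generalising the one detailed for two pieces) that the restriction of $G(\cY)$ to $\mbox{St}(v)$ coincides, up to canonical isomorphism, with the restriction of $G(\cY_{i(v)})$ to $\mbox{St}(v)$. This rests on the fact that the immersions $G(\cY_i') \to G(\cY_0)$ and $G(\cY_i') \to G(\cY_i)$ consist of local isomorphisms on simplex stabilisers, so that the local-group data, the injections $\psi_a$, and the twisting elements $g_{a,b}$ at every simplex of $\mbox{St}(v)$ are unambiguously inherited from the corresponding piece.

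From this local identification it follows that the local development at $v$ in $G(\cY)$ coincides with the local development at $v$ in $G(\cY_{i(v)})$. The latter is CAT(0): indeed, $G(\cY_{i(v)})$ is by construction the complex of groups associated to the action of $G_{i(v)}$ on the CAT(0) complex $X_{i(v)}$, and so the link of the central vertex in its local development at $v$ is isometric to the link of a lift of $v$ in $X_{i(v)}$, which is CAT(1); then $X_v$ is CAT(0) by the link condition. Running this argument at every vertex of $Y$ shows that $G(\cY)$ is non-positively curved, hence developable. The main technical point I expect is the bookkeeping verification that the amalgamation construction really does leave the local complex of groups at each vertex intact: when $v \in Y_i'$ one must compare the local groups accessed through $G(\cY_0)$ with those accessed through $G(\cY_i)$, and the consistency of the two identifications follows precisely from the fact that the immersions used in the amalgamation are local isomorphisms on the simplex stabilisers of $X_i'$.
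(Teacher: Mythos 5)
Your proposal is correct and follows exactly the argument the paper leaves implicit (the theorem is stated with a trivial \qed as a direct generalisation of the two-piece case): the covering-by-interiors hypothesis forces $\mbox{St}(v)$ into a single piece $Y_{i(v)}$, the amalgamation construction leaves the restriction of $G(\cY)$ to that star isomorphic to the corresponding restriction of $G(\cY_{i(v)})$ (using that the immersions are local isomorphisms on stabilisers), and the local development is then a star in the CAT(0) complex $X_{i(v)}$, so Theorem~\ref{nonpositivelycurveddevelopable} applies. Your remark that the bookkeeping at vertices of $Y_i'$ needs checking is exactly the right technical caveat, and it is discharged precisely as you describe.
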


\section{Rotation families, actions on trees and geometric small cancellation theory.}
\label{smallcancellation}

We now present the theory of small cancellation over a graph of groups, using the notion of rotation families introduced by Gromov \cite{GromovRandomWalkRandomGroups}.

\begin{definition}[rotation family]
Let $X$ be a metric space and $G$ be a group acting isometrically on $X$. A \textit{rotation family} consists of a pairwise distinct collection $(A_\xi, H_\xi)_{\xi \in \Xi}$ where:
\begin{itemize}
\item each $H_\xi$ is a subgroup of $G$ stabilising the subspace $A_\xi \subset X$ (i.e. $H_\xi A_\xi = A_\xi$),
\item there is an action of $G$ on $\Xi$ which is compatible with the action of $G$ on $X$, i.e. for every $\xi \in \Xi$, we have $A_{g\xi} = g A_\xi$ and $H_{g\xi}= gH_\xi g^{-1}$.
\end{itemize}
\label{rotationfamily}
\end{definition}

In what follows, we will deal only with actions on trees. For the related notions in the general case, we refer to \cite{CoulonAsphericity, DelzantGromovCourbureMesoscopique, DahmaniGuirardelOsin}.

\begin{definition}[Geometric small cancellation condition for actions on trees]
Let $T$ be a simplicial tree, $G$ a group acting isometrically on $T$, and $(A_\xi, H_\xi)_{\xi \in \Xi}$ a rotation family such that each $H_\xi$ is an infinite cyclic subgroup generated by a hyperbolic element $g_\xi$ with axis $A_\xi\subset T$. We define the following constants: 
$$ l_{\mathrm{max}} = \underset{\xi \neq \xi'}{\mbox{max~}} \mbox{diam} (A_\xi \cap A_{\xi'} ),$$
$$ R_{\mathrm{min}} = \underset{\xi \in \Xi}{\mbox{min }} l( g_\xi),$$
where $l(g_\xi)$ denotes the translation length of $g_\xi$ acting on $T$.

We say that the rotation family $(A_\xi, H_\xi)_{\xi \in \Xi}$ satisfy the \textit{geometric small cancellation condition} $C''(\alpha)$ if 
$$ l_{\mathrm{max}} < \alpha R_{\mathrm{min}}.$$
\end{definition}

In what follows, we will be particularly interested in the $C''(\alpha)$ condition for $\alpha \leq \frac{1}{6}$.

\begin{rmk}[Classical small cancellation] This geometric framework covers the classical $C''(1/6)$ small cancellation theory. Let $F_n$ be the free group on $n$ generators, acting freely cocompactly on the associated $2n$-valent tree $T_n$. To every element $g$ of $F_n$ corresponds an isometry of $T_n$. Let $\cR= (g_i)_{i \in I} $ be a set of cyclically reduced words of $F_n$. We assume that $\cR$ is \textit{symmetrised}, that is, inverses and cyclic conjugates of elements of $\cR$ belong to $\cR$. Each $g_i$ defines an axis $A_i \subset T_n$ and a non-trivial infinite subgroup $\langle g_i \rangle \subset F_n$. The family of translates $gA_i, g \in F_n$ and subgroups $g \langle g_i \rangle g^{-1}$ defines a rotation family, and one sees that the geometric small cancellation condition $C''(1/6)$ is equivalent to requiring that the length of any common prefix to two elements of $\cR$  is strictly less than a sixth of the length of the shortest element of $\cR$.
\end{rmk}

\begin{rmk}[$C'(1/6)$ versus $C''(1/6)$] Another small cancellation condition which is more commonly used in geometric group theory is the $C'(1/6)$-condition which, restated in this geometric framework of actions on trees, requires that for $\xi \neq \xi'$, the diameter of the intersection $A_\xi \cap A_{\xi'}$ is strictly less than $\frac{1}{6} \cdot \mbox{min}(l(g_\xi), l(g_{\xi'}))$. In the case of classical small cancellation, the uniform condition $C''(1/6)$ is much stronger than condition $C'(1/6)$. Indeed, a symmetrised set satisfying the $C''(1/6)$ condition is necessarily finite. In contrast, infinitely-presented $C'(1/6)$ small cancellation groups form a very rich class of groups \cite{ArzhantsevaDrutuInfiniteSmallCancellation, ArzhantsevaOsajdaSmallCancellationHaagerup}. In this article, the uniform bound in the $C''(1/6)$ condition will be crucial for certain geometric constructions (see Sections \ref{Gromovconstruction} and \ref{CAT0}).
\end{rmk}

\section{A non-positively curved complex of groups arising from geometric small cancellation theory.}
\label{Gromovconstruction}

From now on, we consider a graph of groups $G(\Gamma)$ over a finite graph $\Gamma$, with fundamental group $G$ and associated Bass--Serre tree $T$, together with a rotation family $(A_\xi, H_\xi)_{\xi \in \Xi}$ satisfying the assumptions of Theorem \ref{mainPS1}. Recall in particular that this implies that $ l_{\mathrm{max}} < \frac{1}{6}R_{\mathrm{min}}$.

\begin{definition}[cones]  For every $\xi \in \Xi$, let $C_\xi$ be the cone with apex $O_\xi$ over the axis $A_\xi$, with a simplicial structure coming from that of $A_\xi$ and in which every triangle is modelled after a flat isosceles triangle $\tau$ whose basis has length $1$ and whose other edges have length $r = \big( 2 \sin(\frac{\pi}{R_{\mathrm{min}}}) \big)^{-1}$, the angle at the apex $O_\xi$ being $\frac{2\pi}{R_{\mathrm{min}}} $.
\end{definition}

\begin{definition}[coned-off space] Let $\widehat{T}$ be the 2-dimensional simplicial complex obtained from the disjoint union of $T$ and the various cones $C_\xi$ by identifying the base of each $C_\xi$ with the axis $A_\xi \subset T$ in the obvious way. This complex is endowed with a structure of piecewise-Euclidean complex. The action of $G$ on $T$ naturally extends to an isometric action on $\widehat{T}$. 
\label{coneoff}
\end{definition}
\begin{rmk} In \cite{GromovCATkappa}, Gromov uses a different metric on the coned-off space. In this article, we use a piecewise-Euclidean metric in order to use the combination theorems of \cite{MartinBoundaries} for groups acting on CAT(0) piecewise-Euclidean complexes (or more generally on $M_\kappa$-complexes in the sense of Bridson \cite{BridsonPhD}).
\end{rmk}

\noindent \textbf{Coordinates.} We introduce some coordinates on $\widehat{T}$ as follows. For an element $u \in C_\xi \subset \widehat{T},$ we write  $u = [\xi, x, t]$, where $x$ is the intersection point of the ray $[O_\xi, u)$ with $T$, and $t = d(u, O_\xi)$.\\

 Since axes $A_\xi$ can share more than one edge, the coned-off space $\widehat{T}$ is not CAT(0) in general.  In  what follows, we generalise a trick due to Gromov \cite{GromovCATkappa} to turn this is space into a CAT(0) space. This will be done by performing certain identifications described below, which rely on the following definition. 

\begin{definition}[Critical angle $\theta_c$] We define the \textit{critical angle} 
$$\theta_c = \frac{\pi}{2} - \pi \frac{ l_{\mathrm{max}}}{R_{\mathrm{min}}}.$$
\end{definition}
Note that the $C''(1/6)$ condition immediately implies the following: 

\begin{lem} We have $\theta_c >\frac{\pi}{3}$.  \qed
\label{lemmetechnique1}
\end{lem}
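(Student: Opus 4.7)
This lemma is an immediate unpacking of definitions, so there is essentially no strategic choice to make; the plan is simply to substitute the small cancellation inequality into the expression for $\theta_c$.

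Concretely, I would start from the hypothesis $l_{\mathrm{max}} < \frac{1}{6} R_{\mathrm{min}}$ provided by the $C''(1/6)$ condition, divide by $R_{\mathrm{min}} > 0$ to get $l_{\mathrm{max}}/R_{\mathrm{min}} < 1/6$, multiply by $\pi$ to obtain $\pi\, l_{\mathrm{max}}/R_{\mathrm{min}} < \pi/6$, and then reverse the inequality when subtracting from $\pi/2$:
\[
\theta_c \;=\; \frac{\pi}{2} - \pi\frac{l_{\mathrm{max}}}{R_{\mathrm{min}}} \;>\; \frac{\pi}{2} - \frac{\pi}{6} \;=\; \frac{\pi}{3}.
\]
There is no obstacle to address here; the only thing worth noting is that $R_{\mathrm{min}} > 0$ (which is ensured by the hyperbolicity of each generator $g_\xi$, so that each $l(g_\xi) \geq 1$ as we are working in a simplicial tree), so that the division step is legitimate.
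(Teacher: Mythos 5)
Your proof is correct and is exactly the computation the paper has in mind: the paper states the lemma with \qed and no written proof, prefaced only by the remark that it ``immediately'' follows from the $C''(1/6)$ condition, which is precisely the substitution $l_{\mathrm{max}}/R_{\mathrm{min}} < 1/6$ into the definition of $\theta_c$ that you carry out.
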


\noindent \textbf{Identification of slices.} Let $\xi \neq\xi'$ be two elements of $\Xi$ such that the intersection $ I_{\xi, \xi'}\subset T:=A_\xi \cap A_{\xi'}$ is non-empty. By the $C''(1/6)$ condition, $I_{\xi, \xi'}$ is a segment $[x_1,x_2]$ of $T \subset \widehat{T}$ of length at most $l_{\mathrm{max}}$. In particular, the subset $C(I_{\xi, \xi'})$ of $C_\xi$ consisting of the cone over $I_{\xi, \xi'}$ isometrically embeds in the plane. Given point $x,y,z$ of $C(I_{\xi, \xi'})$,  we can thus speak of the angle $\angle_{z} (x,y)$ between $x$ and $y$ seen from $z$,.

\begin{definition}[slice]
Let $\xi$ and $\xi'$ be two distinct elements of $\Xi$ such that the associated axes have a nonempty intersection $ I_{\xi, \xi'} \subset T$. We define the \textit{slice} $C_{\xi, \xi'} \subset C_\xi$ as the set of points $u=[\xi, x, t]$ of $C_\xi$ such that $x \in I_{\xi, \xi'}$ and both angles $\angle_{x_1} (O_\xi, u) $ and $\angle_{x_2} (O_\xi, u) $ are at least $\theta_c$.
\end{definition}

\begin{rmk}
Note in the previous definition that $\theta_c$ corresponds to the angle $\angle_{x_1} (O_\xi, x_2) $ when the segment $[x_1,x_2]$ has exactly $l_{\mathrm{max}}$ edges.
\end{rmk}

We have the following result:

\begin{lem} A point $[\xi,x,t]$ of a cone contained in a slice satisfies the inequality $t \geq r \sin(\theta_c) > \frac{\sqrt{3}}{2}r$. \qed
\label{lemmetechnique2}
\end{lem}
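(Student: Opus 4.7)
The plan is to reduce the inequality to a computation in the Euclidean plane. The slice $C_{\xi,\xi'}$ is contained in the cone $C(I_{\xi,\xi'})$, which by the $C''(1/6)$ condition embeds isometrically in $\bbR^2$ (as noted just before the definition of a slice). I will therefore identify $C(I_{\xi,\xi'})$ with a planar sector having apex $O_\xi$, base the segment $[x_1,x_2]$, and apex angle $2\pi L/R_{\mathrm{min}}$, where $L \leq l_{\mathrm{max}}$ is the combinatorial length of $I_{\xi,\xi'}$.

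In this planar picture, $x_1$ is a vertex of $A_\xi$ and therefore sits at Euclidean distance $r$ from $O_\xi$. Set $\alpha := \angle_{x_1}(O_\xi, u)$; by definition of the slice, $\alpha \geq \theta_c$. The key geometric step is to drop the perpendicular from $O_\xi$ onto the line through $x_1$ and $u$: its length equals $r\sin\alpha$, and since $t = d(O_\xi, u)$ is no smaller than this perpendicular distance, one obtains $t \geq r\sin\alpha$.

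It then remains to argue $\sin\alpha \geq \sin\theta_c$, for which it suffices to check $\alpha \leq \pi/2$. Since $u$ lies in the sector $O_\xi x_1 x_2$, the ray from $x_1$ through $u$ lies between the rays $x_1 O_\xi$ and $x_1 x_2$, so $\alpha \leq \angle_{x_1}(O_\xi, x_2)$. A direct computation in the isosceles triangle $O_\xi x_1 x_2$ (apex angle $2\pi L / R_{\mathrm{min}}$, equal sides of length $r$) gives $\angle_{x_1}(O_\xi, x_2) = \pi/2 - \pi L / R_{\mathrm{min}} \leq \pi/2$. Monotonicity of $\sin$ on $[0,\pi/2]$ then yields $t \geq r\sin\theta_c$, and the strict inequality $r\sin\theta_c > \tfrac{\sqrt{3}}{2} r$ follows from Lemma \ref{lemmetechnique1} together with $\sin(\pi/3) = \tfrac{\sqrt{3}}{2}$.

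I do not anticipate any substantive obstacle: the entire argument is elementary planar trigonometry once one exploits the isometric embedding of $C(I_{\xi,\xi'})$ into the plane. The one point requiring genuine attention is the upper bound $\alpha \leq \pi/2$, which is precisely where the hypothesis $l_{\mathrm{max}} < \tfrac{1}{6} R_{\mathrm{min}}$ (through the length bound $L \leq l_{\mathrm{max}}$) enters the picture and keeps $\alpha$ on the monotone side of the sine function.
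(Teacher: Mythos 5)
Your overall strategy is sound and is essentially the ``elementary triangle geometry'' the paper has in mind: work inside the planar embedding of $C(I_{\xi,\xi'})$, bound $t$ from below by the distance from $O_\xi$ to the line $x_1u$, and convert the slice's angle condition into $\sin\alpha\geq\sin\theta_c$. The inequality $t\geq r\sin\alpha$ is correct, as is the final step invoking Lemma~\ref{lemmetechnique1}.

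There is, however, a gap in the justification of $\alpha\leq\pi/2$. You assert that $u$ lies in the ``sector'' $O_\xi x_1x_2$ and deduce that the ray $x_1u$ falls between $x_1O_\xi$ and $x_1x_2$, hence $\alpha\leq\angle_{x_1}(O_\xi,x_2)$. But $C(I_{\xi,\xi'})$ is the union of the $L$ isosceles triangles over the edges of $I_{\xi,\xi'}$, which, when $L\geq 2$, strictly contains the triangle $O_\xi x_1x_2$: there is a ``lune'' between the chord $[x_1,x_2]$ and the polygonal arc $v_0v_1\cdots v_L$, and a point $u$ of the slice may lie in that lune. For such $u$ the ray $x_1u$ is on the far side of the line $x_1x_2$, and $\angle_{x_1}(O_\xi,u)>\angle_{x_1}(O_\xi,x_2)$, so your inequality as stated fails. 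The conclusion $\alpha\leq\pi/2$ is nonetheless true, and the cleanest way to see it avoids the triangle $O_\xi x_1x_2$ altogether: every point $u$ of the cone satisfies $t=|O_\xi u|\leq r=|O_\xi x_1|$, and if the angle at $x_1$ in the triangle $O_\xi x_1u$ were obtuse, the opposite side $|O_\xi u|$ would be the strictly longest side, giving $t>r$, a contradiction. (Alternatively: if $\alpha>\pi/2$ then this same comparison gives $t>r\geq r\sin\theta_c$ directly, so the obtuse case is harmless anyway.) A related small inaccuracy is your closing remark: the hypothesis $l_{\mathrm{max}}<\tfrac16R_{\mathrm{min}}$ is not what keeps $\alpha\leq\pi/2$ --- that follows just from $t\leq r$ as above --- rather it is what makes $\theta_c>\pi/3$ via Lemma~\ref{lemmetechnique1}, which is where the factor $\tfrac{\sqrt3}{2}$ comes from.
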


\begin{proof}
Recall that $\theta_c > \frac{\pi}{3}$ by Lemma \ref{lemmetechnique1}. The result is then an elementary application of triangle geometry.
\end{proof}

 Following an idea of Gromov \cite{GromovCATkappa}, we now glue slices $C_{\xi, \xi'} \subset C_\xi$ and $C_{\xi', \xi} \subset C_{\xi'}$ together by identifying points $u=[\xi, x, t] \in C_{\xi, \xi'}$ and $u'=[\xi', x', t'] \in C_{\xi', \xi}$ if $x=x' \in I$, $t=t'$.\\

\begin{figure}[H]
\center
\scalebox{0.75}{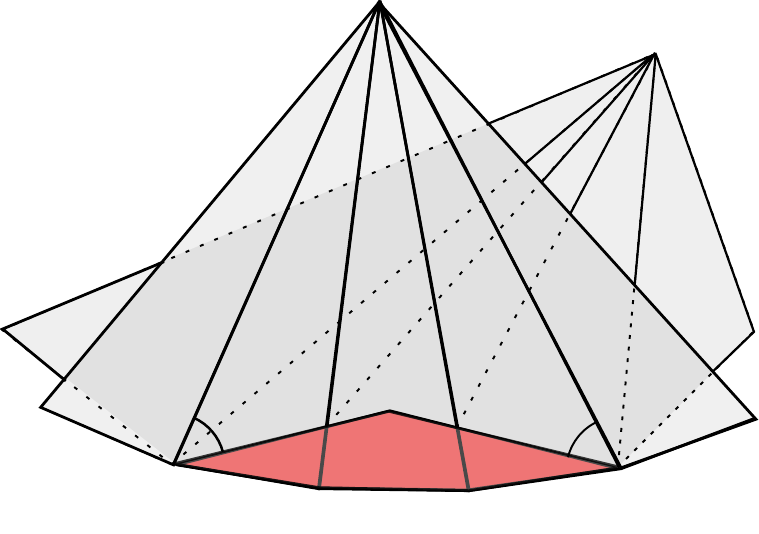}
\caption{A slice identification.}
\label{sliceidentification}
\end{figure}

\begin{definition}[modified coned-off space] Let $\widehat{T}_{\mathrm{slices}}$ be the space obtained from $\widehat{T}$ by making such identifications for every distinct $\xi, \xi' \in \Xi$ such that the associated axes $A_\xi, A_{\xi'}$ have a nontrivial intersection. We call it the \textit{modified coned-off space}.

Let also $Y_\mathrm{slices}$ be the quotient complex $\lquotient{G}{\widehat{T}_{\mathrm{slices}}}$. This space can be seen as the graph $\lquotient{G}{T}$ with a collection of polygons attached and partially glued together along slices, a polygon corresponding to the image of a cone of $\widehat{T}$. 
\end{definition}

We now turn to the construction of a non-positively curved complex of groups with $~\rquotient{G}{\ll H_\xi \gg }$ as fundamental group. This complex of groups will be defined using the amalgamation procedure described in Section \ref{trees}.  We start by defining various subcomplexes.

\begin{definition}[polygonal neighbourhoods, ribbons] Let $N_\xi$ (resp. $N_\xi'$) be the \textit{polygonal neighbourhood} of the apex $O_\xi \in C_\xi$ in $\widehat{T}$ obtained by taking the image of $C_\xi$ under the homothety of centre $O_\xi$ and ratio $\frac{1}{2}$ (resp. $\frac{1}{4}$). 

We also define the \textit{ribbon} $R_\xi$ as the subcomplex obtained from $N_\xi$ by deleting the interior of $N_\xi'$. 
\end{definition} 
 
Up to making simplicial subdivisions, we can assume that the various complexes $N_\xi$, $N_\xi'$ and $R_\xi$ are subcomplexes of $\widehat{T}$. By Lemma \ref{lemmetechnique2} we can identify them with their images in $\widehat{T}_\mathrm{slices}$.

\begin{definition}[truncated cone-off spaces] We define the subcomplex $\overline{T} \subset \widehat{T}$ (resp. $\overline{T}_{\mathrm{slices}} \subset \widehat{T}_\mathrm{slices})$ as the subcomplex obtained from $\widehat{T}$ (resp.$\widehat{T}_{\mathrm{slices}}$) by deleting the interiors of all the polygonal neighbourhoods $N_\xi'$.  This subcomplex comes equipped with an action of $G$ by simplicial isometries. We also denote by $\overline{Y}$ (resp. $\overline{Y}_\mathrm{slices}$) the associated quotient space, which we see as a subcomplex of $Y$ (resp. $Y_\mathrm{slices}$).
\label{truncated}
\end{definition}

\noindent \textbf{A first group action.} We consider the action of $G$ on $\overline{T}_\mathrm{slices}$. \\

Let $\xi \in \Xi$ and $g_\xi$ be a generator of $H_\xi$. Since $g_\xi$ acts hyperbolically on $T$, we can write $g_\xi= h_\xi^{n_\xi}$ where $n_\xi \geq 1$ and $h_\xi \in G$ is not a proper power of an element of $G$. Note that $h_\xi$ also acts hyperbolically on $T$ with axis $A_\xi$. Let $S_\xi$ be the global stabiliser of $A_\xi$, that is, the set of elements $g\in G$ such that $gA_\xi = A_\xi$. Note that $S_\xi$ acts on $C_\xi$ by simplicial isometries.  It follows from condition (RF$_2$) that  $S_\xi$ is the infinite cyclic group generated by $h_\xi$.  Note that there is an isometric embedding $R_\xi \hra \overline{T}_{\mathrm{slices}}$ which is equivariant with respect to the inclusion $\alpha_\xi:S_\xi \ra G$.\\

\noindent \textbf{A second group action.} We consider the action of $S_\xi$ on the ribbon $R_\xi$.\\

Let $K_{\xi}$ be the quotient of the polygonal neighbourhood $N_\xi$ under the action of the subgroup of $S_\xi$ generated by $g_\xi$. This is a regular polygon with $l(g_\xi)=n_\xi\cdot l(h_\xi)$ edges. There is an action by isometries of the cyclic group  $S_\xi/\langle g_\xi \rangle$ of order $n_\xi$ on $K_\xi$ by rotation of $l(h_\xi)$ triangles.  \\

\noindent \textbf{A third group action.} We consider the action of $S_\xi/\langle g_\xi\rangle$ on $K_\xi$.\\

 Let $\beta_\xi: S_\xi \ra S_\xi / \langle g_\xi \rangle$ be the canonical projection. Then there is a $\beta_\xi$-equivariant local isometry $R_\xi \ra K_\xi$.

\begin{figure}[H]
\center
\scalebox{0.75}{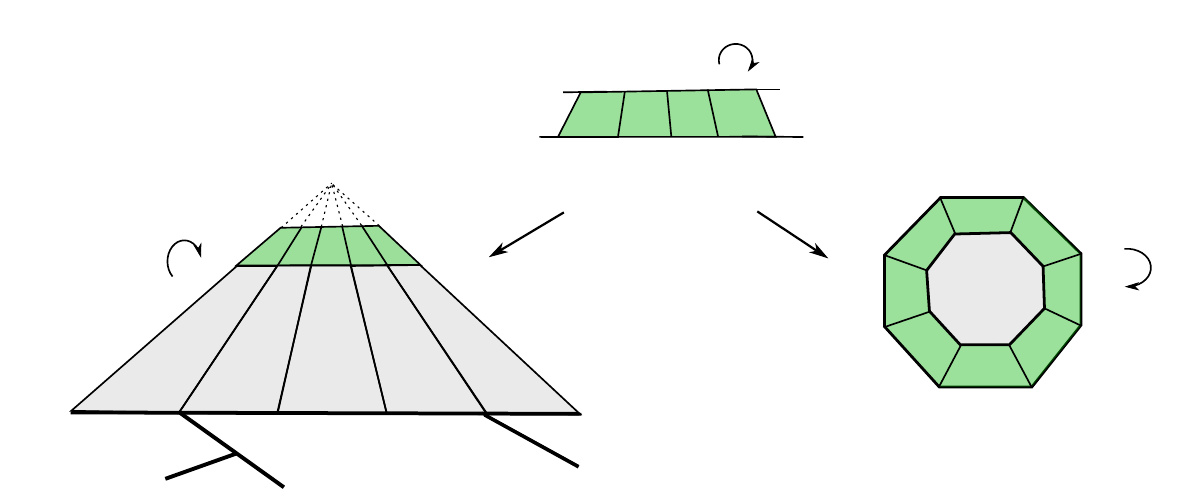}
\caption{The three group actions to be amalgamated.}
\end{figure}

Note that the $\alpha_\xi$-equivariant embedding $R_\xi \hra \overline{T}_{\mathrm{slices}}$ yields an isometric embedding $ ~\lquotient{S_\xi}{R_\xi} \hra ~\lquotient{G}{\overline{Y}_{\mathrm{slices}}}$. Moreover, the $\beta_\xi$-equivariant local isometry $R_\xi \ra K_\xi$ yields an isometric embedding $~\lquotient{S_\xi}{R_\xi} \hra  ~\lquotient{\big( S_\xi / \langle g_\xi \rangle \big)}{K_\xi}  $. The complex obtained from the disjoint union of $\overline{Y}_{\mathrm{slices}}$ and the various complexes $~\lquotient{\big( S_\xi / \langle g_\xi \rangle \big)}{K_\xi}  $, for a set of representatives of $\lquotient{G}{\Xi}$, by identifying the embedded copies $ ~\lquotient{S_\xi}{R_\xi} \hra ~\lquotient{G}{\overline{Y}_{\mathrm{slices}}}$ and   $~\lquotient{S_\xi}{R_\xi} \hra  ~\lquotient{\big( S_\xi / \langle g_\xi \rangle \big)}{K_\xi}  $ is naturally isometric to the quotient complex $Y_\mathrm{slices}$; we will thus think of these quotients as subcomplexes of $Y_\mathrm{slices}$. \\

\begin{definition}
Using the results of Section \ref{trees}, we can thus amalgamate these three actions to get a complex of groups $G(\cY_\mathrm{slices})$ over $Y_\mathrm{slices}$.
\label{THEcomplexofgroups}
\end{definition}

The following result will be proved in Section \ref{CAT0} by studying links of points of $\widehat{T}_\mathrm{slices}$.

\begin{prop}
 The simplicial complex $\widehat{T}_\mathrm{slices}$ is CAT(0).
\label{CAT(0)1}
\end{prop}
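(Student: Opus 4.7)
The plan is to apply the Cartan--Hadamard theorem for piecewise-Euclidean complexes: since $\widehat{T}_\mathrm{slices}$ is built from only finitely many isometry types of cells, it is a complete geodesic space by Bridson's theorem, and so it suffices to show that $\widehat{T}_\mathrm{slices}$ is simply connected and that the link of every vertex is CAT(1), i.e., that every closed geodesic loop in each vertex link has length at least $2\pi$.

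For simply-connectedness, I construct a deformation retraction $\widehat{T}_\mathrm{slices} \to T$ by retracting each cone $C_\xi$ radially onto its base $A_\xi$ via $[\xi, x, t] \mapsto [\xi, x, (1-s)t]$ for $s \in [0,1]$. This map descends to the quotient $\widehat{T}_\mathrm{slices}$ since identified points $[\xi, x, t] \sim [\xi', x, t]$ have the same horizontal coordinate $x \in T$ and the same radial coordinate $t$. Since $T$ is a tree and hence contractible, $\widehat{T}_\mathrm{slices}$ is contractible, in particular simply connected.

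For the link condition, the vertices of $\widehat{T}_\mathrm{slices}$ fall into three types: apices $O_\xi$, original vertices of $T$, and auxiliary vertices introduced by subdivision. At an apex $O_\xi$, the link is made of arcs of length $2\pi/R_\mathrm{min}$ arranged along a copy of $A_\xi$; since $A_\xi$ is a bi-infinite line and Lemma~\ref{lemmetechnique2} ensures that slices stay bounded away from apices, the link at $O_\xi$ is an unbounded simple arc, hence trivially CAT(1). Subdivision vertices have elementary links that pose no issue.

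The delicate case, and the main obstacle, is the link at a tree vertex $v$ lying on one or more axes. This link combines a discrete set of tree-edge directions with, for each axis $A_\xi$ through $v$, two arcs of length $\pi/2 - \pi/R_\mathrm{min}$ coming from the two triangles of $C_\xi$ incident to $v$. Without identifications, two axes sharing both tree edges at $v$ would produce a closed loop of length $4(\pi/2 - \pi/R_\mathrm{min}) = 2\pi - 4\pi/R_\mathrm{min} < 2\pi$ in the link. The slice identifications, calibrated by the critical angle $\theta_c = \pi/2 - \pi l_\mathrm{max}/R_\mathrm{min}$, are designed precisely to glue together the apex-direction vertices $[v, O_\xi]$ and $[v, O_{\xi'}]$ whenever the axes overlap sufficiently near $v$, thereby collapsing such short cycles. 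Using the bound $\theta_c > \pi/3$ from Lemma~\ref{lemmetechnique1} (which is a consequence of the $C''(1/6)$ hypothesis), a combinatorial enumeration of cycles in the identified link at $v$ shows that any genuine closed loop must traverse enough distinct cone contributions to accumulate total length at least $2\pi$; verifying this case-by-case analysis, and checking that the identifications dictated by $\theta_c$ collapse precisely the problematic loops without introducing new ones, is the technical heart of the argument.
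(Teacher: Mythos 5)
Your overall strategy (simply-connectedness plus Gromov's link condition) is the same as the paper's, but the proposal has a genuine gap: it never carries out the two link verifications that are the actual content of the proposition.

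First, a minor issue: the deformation retraction $[\xi,x,t]\mapsto[\xi,x,(1-s)t]$ pushes $t$ to $0$, which by the paper's convention $t=d(u,O_\xi)$ is the apex $O_\xi$, not the base $A_\xi$; at $s=1$ every cone collapses to a point, which is not a retraction onto $T$, and moreover moving towards the apex exits the slices (by Lemma~\ref{lemmetechnique2}), so identified points cease to be identified and the map does not descend to $\widehat{T}_\mathrm{slices}$. The correct retraction pushes $t$ towards $r$; one then has to check that the slice region $\{\angle_{x_1}(O_\xi,u)\geq\theta_c,\ \angle_{x_2}(O_\xi,u)\geq\theta_c\}$ is stable under increasing $t$ on radial rays.

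The serious problem is the link analysis. You dismiss interior-of-cone non-apex points as ``auxiliary vertices introduced by subdivision'' with ``elementary links,'' but this is exactly where the slice identifications produce nontrivial geometry: a point $u$ in a slice has many cones glued along it, and the paper's Lemma~\ref{CAT(0)3} shows the link is a bipartite graph (vertices $a,b,c_i,d_j$) with edges of lengths $2\theta(t)$ and $\pi-2\theta(t)$, and then argues that any injective loop must contain at least two of the long edges, giving total length exactly $\geq 2\pi$. This is a genuine argument, not an elementary dismissal. For the tree-vertex case you correctly describe the obstruction (the short bigon of length $2\pi-4\pi/R_\mathrm{min}$ between two overlapping cones) and the role of $\theta_c$, but you explicitly decline to verify the fix, writing ``verifying this case-by-case analysis\ldots is the technical heart of the argument.'' The paper's Lemma~\ref{CAT(0)4} does this: it identifies the link as a tripartite graph in which type-A vertices have valence $1$ and hence cannot lie on an injective loop, the remaining B--C subgraph is bipartite with all edges of length $\theta_c$, a $4$-cycle is ruled out because it would live entirely in the union of two cone links (which is a tree), and so any loop has $\geq 6$ edges and length $\geq 6\theta_c>2\pi$. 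Without these two lemmas the proposal is a plan, not a proof.
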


\begin{rmk} In the case of the free group on $k$ generators acting on its associated $2k$-valent tree in the natural way, Proposition \ref{CAT(0)1} implies that classical small cancellation groups satisfying the $C''(1/6)$-condition are CAT(0), as shown by Gromov \cite{GromovCATkappa}. From a different point of view, Wise \cite{WiseSmallCancellation} proved that many classical small cancellation groups act properly and cocompactly on CAT(0) cube complexes, using techniques that go back to work of Sageev \cite{SageevCubeComplex}.
\end{rmk}

We have the following:

\begin{lem}
 The simplicial complex $K_\xi$ is CAT(0).
\label{CAT(0)2}
\end{lem}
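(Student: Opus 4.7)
The plan is to apply the standard link condition (Gromov's link condition / piecewise-Euclidean Cartan--Hadamard theorem for 2-dimensional complexes), which states that a simply connected piecewise-Euclidean complex with finitely many isometry types of simplices is CAT(0) if and only if the link of every vertex is CAT(1).

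First I would establish that $K_\xi$ is simply connected. By construction, $N_\xi$ is a metric cone over the subcomplex $\frac{1}{2}A_\xi$ of $A_\xi$ with apex $O_\xi$, and $g_\xi$ acts on $N_\xi$ by translation along the base combined with the obvious extension fixing the apex. The quotient $K_\xi = N_\xi/\langle g_\xi\rangle$ is therefore homeomorphic to a topological disk, built from $l(g_\xi)$ flat isosceles triangles arranged cyclically around the image $\overline{O_\xi}$ of the apex. In particular, $K_\xi$ is simply connected.

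Next I would check the link condition vertex by vertex. There are essentially three types of vertices to consider in $K_\xi$ (after absorbing any auxiliary subdivisions coming from the embedding $N_\xi \hookrightarrow \widehat{T}$):
\begin{itemize}
\item The central vertex $\overline{O_\xi}$, whose link is a metric circle whose length equals the sum of the apex angles of all the triangles around it, namely $l(g_\xi)\cdot \frac{2\pi}{R_{\mathrm{min}}}$.
\item Vertices lying on the outer boundary circle of the disk, whose links are arcs obtained from at most two base angles of adjacent isosceles triangles.
\item Vertices arising from subdivisions that sit in the interior of an edge or in the interior of a face; their links are, respectively, a circle of length exactly $2\pi$ or a circle of length $2\pi$, hence automatically CAT(1).
\end{itemize}
The boundary vertices have links that are arcs, which contain no injective loops and are therefore CAT(1) for free. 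The only genuine computation is for $\overline{O_\xi}$: using that $l(g_\xi)\geq R_{\mathrm{min}}$ by the very definition of $R_{\mathrm{min}}$, we obtain
$$l(g_\xi)\cdot \frac{2\pi}{R_{\mathrm{min}}} \;\geq\; 2\pi,$$
so the link at $\overline{O_\xi}$ is a circle of length at least $2\pi$, hence CAT(1).

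Combining these observations with the link condition yields that $K_\xi$ is CAT(0). I do not expect any real obstacle here: the argument is entirely local, and the only inequality used is the defining inequality $l(g_\xi)\geq R_{\mathrm{min}}$, which is built into the setup. The slight care needed is just to make sure that possible subdivision vertices introduced to make $N_\xi$ a genuine subcomplex of $\widehat{T}$ do not spoil the link condition, and this is immediate because such subdivisions only produce links which are arcs of length at most $\pi$ or full circles of length $2\pi$.
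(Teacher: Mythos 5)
Your proof is correct and takes essentially the same approach as the paper: identify $K_\xi$ as a simply connected disc of flat triangles, observe that the link of the centre is a circle of length $l(g_\xi)\cdot\frac{2\pi}{R_{\mathrm{min}}}\geq 2\pi$, and invoke Gromov's link criterion. The paper's proof is terser (it only mentions the centre vertex, leaving the other links implicit), but the key inequality and the criterion used are identical.
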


\begin{proof}
 The link of the centre of $K_\xi$ is a loop of length $l(g_\xi) \frac{2\pi}{R_{\mathrm{min}}} \geq 2 \pi$, so the result follows from a criterion due to Gromov \cite[Theorem II.5.5]{BridsonHaefliger}.
\end{proof}

\begin{thm}
 The complex of groups $G(\cY_\mathrm{slices})$ is non-positively curved, hence developable, and its fundamental group is isomorphic to $\rquotient{G}{\ll H_\xi \gg }$.
\label{NPCcomplexofgroups}
\end{thm}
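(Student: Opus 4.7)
The plan is to apply Theorem~\ref{CartanHadamard} (the Cartan-Hadamard-type amalgamation theorem from Section~\ref{trees}) to obtain non-positive curvature (hence developability via Theorem~\ref{nonpositivelycurveddevelopable}), and then invoke Theorem~\ref{VanKampen} (Seifert-van Kampen for complexes of groups) to identify the fundamental group. By construction, $G(\cY_\mathrm{slices})$ is the amalgamation of the complex of groups induced by the action of $G$ on $\overline{T}_\mathrm{slices}$ with those induced by the action of $S_\xi/\langle g_\xi\rangle$ on $K_\xi$, glued along those induced by the action of $S_\xi$ on $R_\xi$, for a system of representatives $\xi$ of $\lquotient{G}{\Xi}$.

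For Theorem~\ref{CartanHadamard}, one must verify that each of $\overline{T}_\mathrm{slices}$ and $K_\xi$ is CAT(0). The complex $K_\xi$ is CAT(0) by Lemma~\ref{CAT(0)2}. For $\overline{T}_\mathrm{slices}$, local CAT(0)-ness holds because at interior points the links agree with those in $\widehat{T}_\mathrm{slices}$ (CAT(1) by Proposition~\ref{CAT(0)1}), while at newly-created boundary points on some inner ribbon boundary the link in $\overline{T}_\mathrm{slices}$ is the convex subcomplex of the corresponding CAT(1) link of $\widehat{T}_\mathrm{slices}$ obtained by cutting in the apex direction, hence again CAT(1). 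For simple-connectedness, the plan is a Seifert-van Kampen argument: decompose $\widehat{T}_\mathrm{slices}$ as the union of an open thickening $U$ of $\overline{T}_\mathrm{slices}$ and a small open cone-neighbourhood $V$ of an apex $O_\xi$. Then $V$ is contractible, $U \cap V$ deformation retracts onto $\partial N_\xi'$, which is topologically a copy of the infinite line $A_\xi$ and hence contractible, while $\pi_1(\widehat{T}_\mathrm{slices}) = 1$ by Proposition~\ref{CAT(0)1}; van Kampen then forces $\pi_1(U) = 1$, and since $U$ deformation retracts onto $\overline{T}_\mathrm{slices}$ we obtain $\pi_1(\overline{T}_\mathrm{slices}) = 1$. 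Lemma~\ref{lemmetechnique2} guarantees that the $N_\xi'$'s are pairwise disjoint and disjoint from all slice identifications, so this argument iterates cleanly over all $\xi$.

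Having established CAT(0)-ness of the pieces, Theorem~\ref{CartanHadamard} yields non-positive curvature of $G(\cY_\mathrm{slices})$, and Theorem~\ref{nonpositivelycurveddevelopable} yields developability. For the fundamental group via Theorem~\ref{VanKampen}, each constituent piece is induced by a group action on a simply-connected simplicial complex: $\overline{T}_\mathrm{slices}$ as shown above, $R_\xi$ is topologically an infinite strip, and $K_\xi$ is CAT(0). The fundamental groups of the three complexes of groups are therefore $G$, $S_\xi$, and $S_\xi/\langle g_\xi\rangle$, and the iterated pushout
\[
\pi_1(G(\cY_\mathrm{slices})) = G \underset{S_\xi}{*} \big(S_\xi/\langle g_\xi\rangle\big) \underset{S_{\xi'}}{*} \big(S_{\xi'}/\langle g_{\xi'}\rangle\big) * \cdots
\]
over representatives of $\lquotient{G}{\Xi}$ imposes the relation $g_\xi = 1$ for each orbit. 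By $G$-equivariance of the rotation family these relations generate exactly the normal closure $\ll H_\xi \gg$, giving $\pi_1(G(\cY_\mathrm{slices})) = \rquotient{G}{\ll H_\xi \gg}$ as desired.

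The main obstacle is establishing simple-connectedness of $\overline{T}_\mathrm{slices}$: naively, removing open $2$-dimensional pieces should introduce non-trivial loops, but the essential observation is that each removed piece $\mathrm{int}(N_\xi')$ is a cone over an infinite line (topologically a half-plane-like region, not a disk), so its boundary is itself contractible and the van Kampen argument produces no loops. This is exactly why it is crucial that one cones off a tree rather than a polygon.
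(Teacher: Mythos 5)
Your proposal is correct and follows essentially the same route as the paper: Theorem~\ref{CartanHadamard} together with Proposition~\ref{CAT(0)1} and Lemma~\ref{CAT(0)2} for non-positive curvature, then Theorem~\ref{VanKampen} for the fundamental group computation. The one place where you go beyond the paper's wording is worth flagging: the paper's proof invokes Theorem~\ref{CartanHadamard} by citing CAT(0)-ness of $\widehat{T}_\mathrm{slices}$, whereas the literal hypothesis of that theorem concerns the piece actually being amalgamated, namely $\overline{T}_\mathrm{slices}$. You supply that missing verification — locally via the observation that the links of boundary points of $N_\xi'$ are arcs (since, by Lemma~\ref{lemmetechnique2}, those points lie outside every slice, so the ambient link is a circle and the truncated link a half-circle), and globally via a van Kampen argument exploiting that each $N_\xi'$ is a cone over the bi-infinite line $A_\xi$, so both it and its boundary $\partial N_\xi'$ are contractible and removing its interior does not change $\pi_1$. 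This is precisely the point where coning off a tree rather than a polygon matters, and it is a useful addition: it makes the appeal to Theorem~\ref{CartanHadamard} match its stated hypotheses rather than requiring the reader to extract only the local part of its proof. The fundamental group calculation, via the iterated pushout $G *_{S_\xi}(S_\xi/\langle g_\xi\rangle)*\cdots$ over orbit representatives and the $G$-equivariance of the family, agrees with the paper, which simply reduces to the case of a single orbit.
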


\begin{proof}
Since $\widehat{T}_\mathrm{slices}$ and $K_\xi$ are CAT(0) by Proposition \ref{CAT(0)1} and Lemma \ref{CAT(0)2}, the complex of groups $G(\cY_\mathrm{slices})$ is non-positively curved, hence developable by Theorem \ref{CartanHadamard}. To compute the fundamental group of $G(\cY_\mathrm{slices})$ we can assume that $\lquotient{G}{\Xi}$ is reduced to a single element, the general case following in the same way. Choose an element $\xi \in \Xi$ and a generator $g_\xi=h^{n_\xi}$ of $S_\xi$ (with the same notations as before). It follows from the Van Kampen theorem \ref{VanKampen} that the fundamental group of $G(\cY_\mathrm{slices})$ is isomorphic to the amalgamated product $G *_{S_\xi} S_\xi / \langle g_\xi \rangle $, where the morphism $\alpha_g: S_\xi \hra G$ is the inclusion and the morphism $\beta_g: S_\xi \ra S_\xi / \langle g_\xi \rangle$ is the canonical projection. Thus this group is isomorphic to $\rquotient{G}{\ll g_\xi\gg}$, and the result follows. 
\end{proof} 

This theorem implies the following corollary, which is well-known for classical small cancellation over free products with amalgamation or HNN extensions (see \cite[Theorems V.11.2 and V.11.6]{LyndonSchupp}):

\begin{cor}
The quotient map $G \ra \rquotient{G}{\ll H_\xi \gg }$ embeds each local group of $G$. 
\end{cor}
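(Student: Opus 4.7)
The plan is to deduce the embedding statement directly from the developability of $G(\cY_\mathrm{slices})$, by identifying the local groups of $G(\cY_\mathrm{slices})$ at simplices coming from the Bass--Serre tree with the local groups of $G(\Gamma)$, and then tracing through the canonical morphism from this complex of groups to its fundamental group.

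By Theorem \ref{NPCcomplexofgroups}, the complex of groups $G(\cY_\mathrm{slices})$ is developable with fundamental group isomorphic to $\rquotient{G}{\ll H_\xi\gg}$, and Theorem \ref{developabilityalgebraic} then provides a canonical morphism
$$F\colon G(\cY_\mathrm{slices}) \longrightarrow \rquotient{G}{\ll H_\xi\gg}$$
that is injective on every local group. Every local group of $G(\Gamma)$ is realised, via Bass--Serre theory, as the $G$-stabiliser $G_\sigma$ of some simplex $\sigma$ of $T$. Because $T$ sits canonically inside $\overline{T}_\mathrm{slices}$, the projection of $\sigma$ is a simplex $\bar\sigma$ of $\overline{Y}_\mathrm{slices} \subset Y_\mathrm{slices}$ whose associated local group in $G(\cY_\mathrm{slices})$ is, by Definitions \ref{inducedcomplexofgroups} and \ref{THEcomplexofgroups}, precisely $G_\sigma$.

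It remains to check that $F_{\bar\sigma}$ agrees, up to an inner automorphism of $\rquotient{G}{\ll H_\xi\gg}$, with the restriction to $G_\sigma$ of the quotient map $G \to \rquotient{G}{\ll H_\xi\gg}$. This follows by applying the Van Kampen Theorem \ref{VanKampen} to the sub-complex of groups associated to the action of $G$ on $\overline{T}_\mathrm{slices}$ within the amalgamation of Definition \ref{THEcomplexofgroups}: the inclusion of this sub-complex into $G(\cY_\mathrm{slices})$ induces, on fundamental groups, a map conjugate to the quotient projection, and the functoriality of the canonical morphism on local groups (the proposition following Theorem \ref{developabilityalgebraic}) identifies this induced map with $F_{\bar\sigma}$ when restricted to $G_\sigma$. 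Injectivity of the quotient map on each $G_\sigma$ then follows from the injectivity of $F$ on local groups.

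The main obstacle is this last matching step, where two a priori different homomorphisms $G_\sigma \to \rquotient{G}{\ll H_\xi\gg}$ must be identified: the canonical one arising from $F$, and the restriction of the quotient map. The amalgamation procedure of Section \ref{trees}, together with the Van Kampen theorem and the functoriality of the canonical morphism on local groups, is what allows one to make this identification without resorting to any direct algebraic manipulation of normal closures.
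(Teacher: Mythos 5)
Your proposal is correct and follows essentially the same route as the paper's proof: both restrict $G(\cY_\mathrm{slices})$ to the subcomplex $\overline{Y}_\mathrm{slices}$ to obtain the complex of groups associated to the $G$-action on $\overline{T}_\mathrm{slices}$, invoke the Van Kampen Theorem \ref{VanKampen} together with the functoriality proposition to identify the induced map on fundamental groups with (a conjugate of) the quotient projection $G \ra \rquotient{G}{\ll H_\xi\gg}$, and then conclude injectivity on local groups from the developability of $G(\cY_\mathrm{slices})$.
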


\begin{proof}
Let $G(\overline{\cY}_\mathrm{slices})$ be the complex of groups over $\overline{Y}_\mathrm{slices}$ associated to the action of $G$ on $\overline{T}_\mathrm{slices}$. By construction, this complex of groups is the restriction of $G(\cY_\mathrm{slices})$ to the subcomplex $\overline{Y}_\mathrm{slices}$, that is, there exists a morphism of complexes of groups $F=(F_\sigma, F(a)): G(\overline{\cY}_\mathrm{slices}) \ra G(\cY_\mathrm{slices})$ over the inclusion $\overline{Y}_\mathrm{slices} \hra Y_\mathrm{slices}$ such that each local map $F_\sigma: G_\sigma \ra G_\sigma$ is the identity and all the elements $F(a)$ are trivial. For a chosen basepoint $v_0 \in \overline{Y}_\mathrm{slices}$, the morphism $F$ induces a map $F_*:\pi_1( G(\overline{\cY}_\mathrm{slices}), v_0) \ra \pi_1( G(\cY_\mathrm{slices}), v_0)$  which is conjugated to $G \ra \rquotient{G}{\ll H_\xi \gg }$ by Theorem \ref{VanKampen}. As $G(\cY_\mathrm{slices})$ is developable, the maps $(F_{G(\cY_\mathrm{slices}), v_0})_\sigma: G_\sigma \ra \rquotient{G}{\ll H_\xi \gg }$ are injective and factor as $G_\sigma \hra G \twoheadrightarrow  \rquotient{G}{\ll H_\xi \gg }$ by Proposition \ref{developabilityalgebraic}, hence the result.
\end{proof}

Recall that a torsion element of a group acting by simplicial isometries without inversion on a CAT(0) space necessarily fixes a vertex \cite[Corollary II.2.8]{BridsonHaefliger}. Hence the CAT(0) geometry of the universal cover of $G(\cY)$ yields a geometric proof of the following result, which is well-known for classical small cancellations over free products \cite[Theorem V.10.1]{LyndonSchupp}:

\begin{cor}
Let $g$ be a torsion element in $\rquotient{G}{\ll H_\xi \gg }$, then either 
\begin{itemize}
\item[$(i)$] $g$ is conjugate to the projection of a torsion element in a local factor of $G$, or
\item[$(ii)$] $g$ is conjugate to the projection of an element a power of which is in $ H_\xi $. \qed
\end{itemize} 
\end{cor}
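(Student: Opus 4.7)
The plan is to exploit the CAT(0) geometry of the universal cover $\widetilde{X}$ of the non-positively curved complex of groups $G(\cY_\mathrm{slices})$ constructed in Theorem~\ref{NPCcomplexofgroups}. By \cite[Corollary II.2.8]{BridsonHaefliger}, the torsion element $g \in \rquotient{G}{\ll H_\xi \gg}$ fixes some vertex $\tilde v$ of $\widetilde{X}$. Since $\widetilde{X}$ is the development of $G(\cY_\mathrm{slices})$, up to conjugation I may take $\tilde v$ to be the canonical lift of some vertex $\sigma$ of $\cY_\mathrm{slices}$, so that $g$ arises from an element of the local group $G_\sigma$ via $F_{G(\cY_\mathrm{slices}),v_0}$. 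Combining the argument of the preceding corollary (for simplices of $\overline{Y}_\mathrm{slices}$) with Theorem~\ref{VanKampen} applied to the amalgamation of Definition~\ref{THEcomplexofgroups} (for simplices in a cyclic-quotient piece), the induced map $G_\sigma \to \rquotient{G}{\ll H_\xi \gg}$ factors as the projection to $\rquotient{G}{\ll H_\xi \gg}$ of an explicit lift of $G_\sigma$ into $G$.

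The next step is to read off the possible local groups from Definition~\ref{THEcomplexofgroups}. Two families occur: if $\sigma$ lies in the subcomplex $\overline{Y}_\mathrm{slices}$, then $G_\sigma$ is the $G$-stabilizer of a simplex $\widetilde{\sigma} \subset \overline{T}_\mathrm{slices}$; if $\sigma$ lies in the interior of a piece $\lquotient{(S_\xi/\langle g_\xi\rangle)}{K_\xi}$, then $G_\sigma$ is a subgroup of the finite cyclic group $S_\xi/\langle g_\xi\rangle$, any element of which lifts to a power of $h_\xi \in S_\xi \subset G$. A case-split on the position of $\widetilde{\sigma}$ (respectively $\sigma$) then finishes the proof. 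If $\widetilde{\sigma}$ lies in the Bass--Serre tree $T \subset \overline{T}_\mathrm{slices}$, then its $G$-stabilizer is a local group of $G(\Gamma)$, i.e.\ a local factor of $G$, so $g$ is the projection of a torsion element of such a factor and case~$(i)$ holds. Otherwise $\widetilde{\sigma}$ (or $\sigma$) meets a cone $C_\xi$ away from $T$, and I will argue below that a power of the chosen lift $\tilde g$ of $g$ lies in $S_\xi = \langle h_\xi \rangle$; writing $\tilde g^N = h_\xi^k$, the identity $\tilde g^{N n_\xi} = g_\xi^k \in H_\xi$ yields case~$(ii)$.

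The main technical point to secure is that a power of the $G$-stabilizer of a simplex in the cone region of $\overline{T}_\mathrm{slices}$ must land in some $S_\xi$. Away from slice identifications this is immediate, since any element fixing such a simplex must fix the apex of the unique cone containing it, hence preserve the corresponding axis and therefore lie in $S_\xi$ by condition~(RF$_2$). At slice-identified points, an element of the stabilizer might \emph{a priori} swap two cones $C_\xi$ and $C_{\xi'}$ whose axes are in the same $G$-orbit; but the set of cones meeting a fixed simplex is finite, so a uniform power of any such element preserves each cone and therefore lies in some $S_\xi$, after which the computation above applies unchanged. This slice-swap subtlety is the only delicate geometric input; everything else follows formally from developability and the amalgamation construction of Section~\ref{trees}.
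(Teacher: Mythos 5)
Your overall strategy---using the CAT(0) universal cover of $G(\cY_\mathrm{slices})$ to fix a vertex and then reading off the local groups---is the same as the paper's. The error lies in your sub-case analysis for simplices of $\overline{T}_\mathrm{slices}$ sitting in a cone away from $T$, which you claim produce case~$(ii)$.

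First, the proof of the preceding corollary already shows that \emph{every} $G$-stabiliser of a simplex in $\overline{T}_\mathrm{slices}$ is contained in a vertex stabiliser of $T$, regardless of whether the simplex lies in $T$ or in a cone: by Lemma~\ref{stabiliserinterval} a non-trivial stabiliser equals the global stabiliser of the interval $I_u \subset T$, and because $G$ acts on $T$ without inversion, any such element either fixes $I_u$ pointwise or flips it about its central vertex, so in all cases fixes a vertex of $T$ and lies in a local factor of $G$. Combined with the embedding of the local factors into the quotient, the lift $\tilde g$ is then a torsion element of that local factor. So cone simplices of $\overline{T}_\mathrm{slices}$ give case~$(i)$; no detour through $S_\xi$ is needed or warranted.

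Second, your power argument actually collapses. If $\tilde g^N \in S_\xi = \langle h_\xi \rangle$ were non-trivial, then as a non-trivial power of $h_\xi$ it would act on $C_\xi$ by a non-trivial rotation about $O_\xi$ and would fix no point besides the apex; yet $\tilde g^N$ must fix the point $u \in \overline{T}_\mathrm{slices}$, which is not the apex. Hence $\tilde g^N = 1$, so $k = 0$ in your ``$\tilde g^N = h_\xi^k$'', and $\tilde g^{N n_\xi} = g_\xi^0 = 1$. Case~$(ii)$ would then hold only in the degenerate sense that every element has the trivial power $1 \in H_\xi$, which is not what that clause is meant to capture, and it masks the sharper fact that these stabilisers land in local factors. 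The genuine source of case~$(ii)$ is the \emph{other} family you correctly identify at the start---simplices $\sigma$ in the interior of $\lquotient{(S_\xi/\langle g_\xi\rangle)}{K_\xi}$---where a lift $h_\xi^j$ satisfies $(h_\xi^j)^{n_\xi} = g_\xi^j \in H_\xi$ with no slice-swap subtlety at all. The dichotomy is dictated directly by the amalgam structure of Definition~\ref{THEcomplexofgroups}: the $\overline{T}_\mathrm{slices}$ piece gives case~$(i)$ throughout, and the $K_\xi$ pieces give case~$(ii)$.
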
 

We now study the local groups of the complex of groups $G(\cY_\mathrm{slices})$. We first need the following definition. 

\begin{definition}
Let $u$ be a point of $\overline{T}_\mathrm{slices}$ and choose a element $[\xi,x,t]\in \overline{T}$ that projects to $u$. Suppose that there exists an element $\xi' \neq \xi$ such that $u$ belongs to the slice $C_{\xi, \xi'}$. Then there exist exactly two points $a , b$ in $A_\xi \subset T$ such that the angle between $O_\xi$ and $u$ seen from $a$ and $b$ is exactly the critical angle $\theta_c$. Let $I_u$ be the minimal subcomplex of the axis $A_\xi$ containing the geodesic between $a$ and $b$. 
\label{stabinterval}
\end{definition}

\begin{lem}
Let $u$ be a point of $\overline{T}_\mathrm{slices}$. The stabiliser of $u$ is non-trivial only if $u$ belongs to some slice $C_{\xi, \xi'} \subset \overline{T}_\mathrm{slices}$ for some $\xi \neq \xi'$ in $\Xi$, in which case that stabiliser is exactly the global stabiliser of $I_u$.
\label{stabiliserinterval}
\end{lem}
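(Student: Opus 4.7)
The plan is to describe the fibre of $u$ under the quotient map $\overline{T} \ra \overline{T}_{\mathrm{slices}}$ and to read off the stabiliser from it. I would choose any preimage $\tilde{u} = [\xi, x, t] \in \overline{T}$ of $u$. A first observation is that $I_u \subset T$ is intrinsic to $u$ and does not depend on this choice: every cone $C_{\xi'}$ has the same isometry type near its base (apex angle $2\pi/R_{\mathrm{min}}$ and slant $r$), so for any axis $A_{\xi'}$ passing through $x$ the function $a \mapsto \angle_a(O_{\xi'}, u)$ depends only on $d(a,x)$ and $t$, giving the same endpoints $a, b$ of $I_u$ from every preimage.

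Next, I would establish that the fibre of $u$ equals $\{[\xi', x, t] : A_{\xi'} \supset I_u\}$. By construction, $[\xi, x, t] \sim [\xi', x, t]$ holds precisely when $u \in C_{\xi, \xi'} \cap C_{\xi', \xi}$; unpacking the critical angle inequality at the endpoints of $A_\xi \cap A_{\xi'}$ and using the previous observation, this is equivalent to $A_\xi \cap A_{\xi'} \supset I_u$, that is, $A_{\xi'} \supset I_u$. An element $g \in G$ then stabilises $u$ iff $g\tilde{u} = [g\xi, g \cdot x, t]$ lies in the fibre, which forces $g \cdot x = x$ and $A_{g\xi} \supset I_u$. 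I would then show this is equivalent to $gI_u = I_u$. On the one hand, if $gI_u = I_u$, then $g$ fixes the midpoint $x$ of $I_u$ and $g \cdot A_\xi \supset gI_u = I_u$; on the other, if $g$ fixes $x$ and $A_{g\xi} \supset I_u$, then $I_u$ and $gI_u$ are two intervals in the geodesic $A_{g\xi}$ centred at $x$ of the same length, hence coincide. This yields $\mathrm{Stab}(u) = \mathrm{Stab}(I_u)$ whenever $u$ lies on a slice.

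For the remaining case, I would note that if $u$ does not lie on any slice, its fibre reduces to $\{\tilde{u}\}$, so the stabiliser of $u$ must pointwise fix $\tilde{u}$ in $\overline{T}$. If $u$ is strictly inside the cone $C_\xi$, such an element preserves $C_\xi$, hence belongs to $S_\xi$, which by condition (RF$_2$) is infinite cyclic and acts on $C_\xi$ by non-trivial rotations fixing only the apex $O_\xi$; since $O_\xi$ has been removed in $\overline{T}$, the stabiliser is trivial. I expect the main obstacle to be the geometric verification that $I_u$ is intrinsic and that the fibre has the stated description, both of which rely on the fact that the critical angle $\theta_c$ is defined symmetrically and exceeds $\pi/3$ by Lemma \ref{lemmetechnique1}; the $C''(1/6)$ hypothesis is what ultimately ensures this compatibility across all axes passing through $I_u$.
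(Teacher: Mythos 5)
Your proposal follows essentially the same path as the paper's own proof: pick a preimage $[\xi,x,t]$, observe that an element $g$ fixing $u$ must satisfy $gx=x$ and be compatible with the slice identifications, split into the cases $g\xi=\xi$ (handled by (RF$_2$) since $S_\xi$ acts on $A_\xi$ by nontrivial translations) and $g\xi\neq\xi$ (which forces $u$ into a slice and $g$ to stabilise $I_u$). The one genuine value-added in your write-up is that you isolate and verify explicitly that $I_u$ is intrinsic to $u$, i.e.\ independent of the chosen preimage, and you package the whole argument via a clean description of the fibre of the quotient map $\overline{T}\to\overline{T}_{\mathrm{slices}}$. The paper implicitly relies on this intrinsicness when it asserts ``$g$ globally stabilises $I_u$,'' so making it a standalone observation is a worthwhile clarification, not a change of route.

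Two small cautionary points, both shared with the paper: first, your argument for the reverse containment quietly uses that $x$ is the \emph{centre} of $I_u$ (so that $gI_u=I_u$ forces $gx=x$); this is clear when $x$ is a vertex, since $I_u$ is then the symmetric subcomplex rounding of $[a,b]$, but it deserves a word when $x$ sits in the interior of an edge. Second, your treatment of the non-slice case explicitly restricts to $u$ strictly inside a cone $C_\xi$; the statement as written covers points on the base tree $T$, where the stabiliser of a vertex need not be trivial, and neither your proof nor the paper's addresses that case — the downstream applications only ever invoke the lemma away from $T$, so this is an imprecision in the statement rather than a gap you introduced.
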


\begin{proof}
Let $u$ be a point of $\overline{T}_\mathrm{slices}$ and choose a element $[\xi,x,t]\in \overline{T}$ that projects to $u$. For an element $g$  of $G$,  $g$ sends $[\xi,x,t]$ to $[g\xi,gx,t]$. If $g$ fixes $u \in  \overline{T}_\mathrm{slices}$, then these two points have to be identified, which implies that $gx=x$. If $g\xi=\xi$, then $g$ stabilises the axis $A_\xi$. By condition (RF$_2$), the only element of $G$ fixing a point of $C_\xi$, hence of $A_\xi$, is the identity, which implies that $g$ is the identity of $G$. If $g\xi \neq \xi$, then $[\xi,x,t]$ is contained in the slice $C_{\xi, g\xi}$. In particular, the segment $I_u$ is contained in $A_\xi \cap gA_\xi$ and $g$ globally stabilises $I_u$. Reciprocally, an element stabilising $I_u$ fixes $u$ since $G$ acts on $\overline{T}_\mathrm{slices}$ by isometries.
\end{proof}

\begin{lem}
For every $\xi \in \Xi$, the action of $S_\xi / \langle g_\xi \rangle$ on the polygon $K_\xi$ is free on the complement of its centre. The stabiliser of its centre is exactly $S_\xi / \langle g_\xi \rangle$. \qed
\label{stabpolygon}
\end{lem}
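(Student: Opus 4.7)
The plan is to exploit the explicit description of $K_\xi$ as a regular polygon obtained by quotienting the polygonal neighbourhood $N_\xi$ of the apex $O_\xi$ by the translation subgroup $\langle g_\xi \rangle$, together with the fact that elements of $S_\xi = \langle h_\xi \rangle$ act on the cone $C_\xi$ by ``rotating'' the parameter $x \in A_\xi$ while fixing the cone coordinate $t$ and the apex $O_\xi$.

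First, I would identify the centre of $K_\xi$ with the image of $O_\xi$. Every element $h_\xi^k \in S_\xi$ acts on $C_\xi$ by the formula $[\xi, x, t] \mapsto [\xi, h_\xi^k x, t]$; in particular it fixes the apex $O_\xi$. Passing to the quotient $K_\xi = N_\xi / \langle g_\xi \rangle$, the image of $O_\xi$ is therefore fixed by every class in $S_\xi / \langle g_\xi \rangle$, which establishes the statement about the centre.

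Next, for freeness outside the centre, take a point $u \in K_\xi$ distinct from the centre, and lift it to a point $[\xi, x, t] \in N_\xi$ with $t > 0$. Suppose the class of some $h_\xi^k$ (with $0 \le k < n_\xi$) fixes $u$ in $K_\xi$. By the definition of the quotient, this means there exists $m \in \bbZ$ with $h_\xi^k \cdot [\xi, x, t] = g_\xi^m \cdot [\xi, x, t]$ in $N_\xi$, i.e.\ $[\xi, h_\xi^k x, t] = [\xi, h_\xi^{m n_\xi} x, t]$. Comparing the $A_\xi$-coordinates gives $h_\xi^{k - m n_\xi} x = x$. Since $h_\xi$ acts hyperbolically on $T$ with translation length $l(h_\xi) > 0$, its only power fixing a point of $A_\xi$ is the trivial one, forcing $k = m n_\xi$ and hence $k = 0$. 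Thus the class of $h_\xi^k$ in $S_\xi/\langle g_\xi \rangle$ is trivial, proving the action is free away from the centre.

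The only potentially subtle point is justifying that $h_\xi^j$ acts freely on $A_\xi$ for $j \neq 0$; but this follows immediately from condition (RF$_1$), since $h_\xi$ is hyperbolic, so no routine computation is required. The rest is a direct unwinding of the definitions of $N_\xi$, $K_\xi$ and the action by rotation.
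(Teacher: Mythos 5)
Your proof is correct and fills in the routine verification that the paper leaves to the reader (the lemma is stated with \qed and no argument). The two key points — that every element of $S_\xi=\langle h_\xi\rangle$ fixes the apex $O_\xi$, and that a nontrivial power of the hyperbolic element $h_\xi$ cannot fix any point of $A_\xi$, so that comparing the $[\xi,x,t]$-coordinates (valid since $t>0$ away from the centre) forces $k\equiv 0 \pmod{n_\xi}$ — are exactly what is needed, and your unwinding of the quotient $K_\xi = N_\xi/\langle g_\xi\rangle$ is the natural route.
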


\begin{cor}
The local groups of $G(\cY_\mathrm{slices})$ are either finite or subgroups of the local groups of $G(\Gamma)$.
\end{cor}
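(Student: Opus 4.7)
The plan is to exploit the description of $G(\cY_\mathrm{slices})$ as the amalgamation of three complexes of groups (Definition \ref{THEcomplexofgroups}). By construction (see Section \ref{trees}), a local group of $G(\cY_\mathrm{slices})$ at a vertex $\sigma$ of $\cY_\mathrm{slices}$ is precisely the stabiliser of a lift of $\sigma$ under one of the three group actions that have been amalgamated: the action of $G$ on $\overline{T}_\mathrm{slices}$, the action of $S_\xi$ on some ribbon $R_\xi$, or the action of $S_\xi/\langle g_\xi\rangle$ on some polygon $K_\xi$. I would treat these three possibilities in turn.

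For the action of $S_\xi/\langle g_\xi\rangle$ on $K_\xi$, Lemma \ref{stabpolygon} immediately shows that each such local group is either trivial or the full quotient $S_\xi/\langle g_\xi\rangle$, which is cyclic of order $n_\xi$ and thus finite. For the action of $S_\xi = \langle h_\xi\rangle$ on the ribbon $R_\xi$, I would note that $h_\xi$ acts on $C_\xi$ by the formula $h_\xi \cdot [\xi,x,t] = [\xi, h_\xi x, t]$, so a non-trivial power $h_\xi^{k}$ fixes $[\xi,x,t]$ only if $h_\xi^{k}$ fixes $x \in A_\xi$; but $h_\xi$ translates $A_\xi$ by $l(h_\xi)>0$ with no fixed point, so the only fixed point of any non-trivial power on $C_\xi$ is the apex $O_\xi$. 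Since $R_\xi \subset C_\xi \setminus N_\xi'$ avoids the apex, all local groups of this type are trivial.

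The remaining case, the action of $G$ on $\overline{T}_\mathrm{slices}$, is governed by Lemma \ref{stabiliserinterval}: the stabiliser of a point $u$ is either trivial or the global stabiliser $\mathrm{Stab}_G(I_u)$ of a bounded non-empty subcomplex $I_u \subset A_\xi \subset T$. To conclude I need to show that $\mathrm{Stab}_G(I_u)$ is contained in a local group of $G(\Gamma)$. Since $G$ acts without inversion on its Bass--Serre tree $T$, the subgroup $\mathrm{Stab}_G(I_u)$ has bounded orbits on $T$, and Serre's fixed-point theorem gives that it fixes some vertex $v$ of $I_u$. The stabiliser $\mathrm{Stab}_G(v)$ is conjugate to a vertex group of $G(\Gamma)$, so $\mathrm{Stab}_G(I_u)$ embeds as a subgroup of a local group of $G(\Gamma)$, which finishes the argument.

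I do not anticipate any serious obstacle: all of the local computations are already packaged in Lemmas \ref{stabiliserinterval} and \ref{stabpolygon}, and the description of the local groups of an amalgamated complex of groups is immediate from the construction of Section \ref{trees}. The only point requiring care is the last step, passing from the setwise stabiliser of the interval $I_u$ to a point stabiliser of $T$, which is precisely where the no-inversion property of the Bass--Serre action is used.
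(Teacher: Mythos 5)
Your proof is correct and follows essentially the same route as the paper: decompose $G(\cY_\mathrm{slices})$ into its amalgam pieces, invoke Lemma \ref{stabpolygon} for the polygon action and Lemma \ref{stabiliserinterval} for the action on $\overline{T}_\mathrm{slices}$, then reduce the global stabiliser of $I_u$ to a vertex stabiliser of $T$ using the no-inversion hypothesis. Two small remarks: the paper omits your ribbon case because the local maps $(F_i)_\sigma$ in the amalgamation construction are isomorphisms, so the local groups over the ribbon coincide with local groups already accounted for in the adjacent pieces (your argument there is correct but redundant); and for the last step the paper avoids Serre's fixed-point theorem in favour of a direct dichotomy (an element of $\mathrm{Stab}_G(I_u)$ either fixes $I_u$ pointwise or flips it about its midpoint, and without inversion that midpoint must be a vertex), which amounts to the same reasoning you carry out via bounded orbits.
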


\begin{proof}
Since $G(\cY_\mathrm{slices})$ is an amalgam by Definition \ref{THEcomplexofgroups}, it is enough to look at stabilisers for the action of $G$ on $\overline{T}_\mathrm{slices}$ and for the action of $S_\xi / \langle g_\xi \rangle$ on $K_\xi$. For the latter action, the result follows from Lemma \ref{stabpolygon}. 

Since $G$ acts on $T$ without inversion, it acts without inversion on $\overline{T}_\mathrm{slices}$. Thus, it is enough to look at stabilisers of points of $\overline{T}_\mathrm{slices}$ under the action of $G$. Let $u$ be a point of $\widehat{T}_\mathrm{slices}$ and $g$ an element of $G$ fixing $u$. The stabiliser of $u$ is either trivial or exactly the global stabiliser of $I_u$ by Lemma \ref{stabiliserinterval}. Since $G$ acts on $T$ without inversion, either $g$ fixes pointwise $I_u$ or $g$ flips it and fixes the central vertex of $I_u$. In each case, the stabiliser of $u$ is contained in the stabiliser of some vertex of $T$.
\end{proof}

\section{Construction of cocompact models of classifying spaces for proper actions.}
\label{EG}

Gluing slices together was used to prove that the complex of groups $G(\cY_\mathrm{slices})$ is non-positively curved. We now modify the construction so as to get a complex of groups that can be used to study the finiteness properties of the small cancellation quotient. In particular, we obtain the following: 

\begin{thm}
Let $G(\Gamma)$ be a graph of groups over a finite simplicial graph $\Gamma$, with fundamental group $G$ and Bass--Serre tree $T$.  Let $(A_\xi, H_\xi)_{\xi \in \Xi}$ a rotation family that satisfies conditions {\normalfont (RF$_1$)} and {\normalfont (RF$_2$)} of Theorem \ref{mainPS1}, as well as the geometric small cancellation condition $C''(1/6)$. If all the local groups of $G(\Gamma)$ admit cocompact models of classifying spaces for proper actions, then so does $\rquotient{G}{\ll H_\xi \gg }$.
\label{PScombinationEG}
\end{thm}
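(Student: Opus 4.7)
The plan is to apply Theorem~\ref{combinationEG} to the complex of groups $G(\cY_\mathrm{slices})$ constructed in Section~\ref{Gromovconstruction}. By Theorem~\ref{NPCcomplexofgroups} this complex is non-positively curved and developable, with fundamental group $\rquotient{G}{\ll H_\xi \gg}$, and its universal cover $X$ is CAT(0). Since $\Gamma$ is finite (and one should assume, explicitly or implicitly, that there are only finitely many $G$-orbits of axes), the quotient $Y_\mathrm{slices}$ is a finite piecewise-Euclidean complex, so the hypotheses to verify are: (a) each local group of $G(\cY_\mathrm{slices})$ admits a cocompact model of classifying space for proper actions, and (b) for every finite subgroup $H$ of $\rquotient{G}{\ll H_\xi \gg}$, the fixed-point set $X^H$ is contractible.

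I would treat (b) first, as it is the easier of the two. The universal cover $X$ is a piecewise-Euclidean complex built from finitely many isometry types of simplices, hence complete; combined with the CAT(0) property this ensures via Cartan's fixed-point theorem (\cite[Corollary~II.2.8]{BridsonHaefliger}) that $X^H \neq \varnothing$ for every finite subgroup $H$. Moreover, the fixed-point set of any group of isometries of a CAT(0) space is a closed convex subspace, being the intersection of the convex fixed-point sets of its elements. Therefore $X^H$ is a non-empty closed convex subset of $X$, and in particular contractible.

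For (a), I would use the amalgamation structure from Definition~\ref{THEcomplexofgroups} together with Lemmas~\ref{stabiliserinterval} and~\ref{stabpolygon}. The local groups at the centres of the polygons $K_\xi$ are the finite cyclic groups $S_\xi/\langle g_\xi\rangle$, for which a single point is a cocompact $\underline{E}$. All remaining local groups arise from stabilisers of points of $\overline{T}_\mathrm{slices}$ under the action of $G$: they are either trivial, full stabilisers of vertices or edges of $T$ (which are exactly the local groups of $G(\Gamma)$ and satisfy the hypothesis), or global stabilisers of subsegments $I_u$ of axes, which sit as subgroups of index at most two in an edge stabiliser of $T$. Since the property of admitting a cocompact model of $\underline{E}$ is inherited by subgroups of finite index, the hypothesis on the local groups of $G(\Gamma)$ yields (a), possibly after a barycentric subdivision to ensure that every segment with an inverting flip gets a fixed vertex at its midpoint.

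The main obstacle in this scheme is the careful case analysis in (a): one has to check that every global stabiliser of a segment $I_u$ really is contained with finite index in some local group of $G(\Gamma)$ and to handle the possible flips of such segments by an appropriate subdivision. Once (a) and (b) are in place, Theorem~\ref{combinationEG} delivers a cocompact model of classifying space for proper actions for $\rquotient{G}{\ll H_\xi \gg}$. \qed
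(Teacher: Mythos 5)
Your plan to apply Theorem~\ref{combinationEG} directly to $G(\cY_\mathrm{slices})$ breaks down at step (a), and this is exactly what the radial collapsing of Section~\ref{EG} is designed to circumvent. By Lemma~\ref{stabiliserinterval}, the nontrivial local groups of $G(\cY_\mathrm{slices})$ coming from $\overline{Y}_\mathrm{slices}$ are global stabilisers $S$ of segments $I_u$ lying in the axes. Such an $S$ contains the pointwise stabiliser $P=\bigcap_{v\in I_u} G_v$ with index at most two, and $S$ (hence $P$) is a subgroup of some vertex group $G_{v_0}$; but $P$ can have \emph{infinite} index in $G_{v_0}$ and in every edge group along $I_u$, since $I_u$ may span several edges and $P$ is the intersection of all the corresponding local groups. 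Admitting a cocompact model of classifying space for proper actions passes to finite-index subgroups and overgroups, not to arbitrary subgroups, so the hypothesis on the local groups of $G(\Gamma)$ gives no control over $P$. Your phrase ``sit as subgroups of index at most two in an edge stabiliser of $T$'' is therefore not correct: the finite index is between $P$ and $S$, not between $S$ and an edge group, and without an additional hypothesis such as acylindricity (present in Theorem~\ref{mainPS2}, absent here) the argument does not close. This is a genuine obstruction, not a case analysis to tidy up.

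The paper's remedy is to replace $G(\cY_\mathrm{slices})$ by a different complex of groups $G(\cY)$, obtained by radially collapsing the annular parts of the polygons of $X_\mathrm{slices}$ --- the region where all slice identifications, and hence all the problematic interval stabilisers, live. In the resulting space $X$, the only non-trivial stabilisers of simplices are the finite cyclic groups $S_\xi/\langle g_\xi\rangle$ at the apices and copies of the vertex and edge groups of $G(\Gamma)$ along $\widetilde{\Gamma}$, so condition (a) is immediate from the hypothesis. The cost is that $X$ is no longer CAT(0), so your one-line convexity argument for (b) is unavailable; this is the content of Lemma~\ref{contractiblefixedpoint}, which establishes contractibility of $X^H$ by transporting the question to $X_\mathrm{slices}$ via the maps $p_\mathrm{coll}$ and $p_\mathrm{slices}$ and invoking Lemma~\ref{stabpolygon}. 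In short, the two approaches trade the difficulty in (a) against that in (b), and only the paper's version of the trade actually succeeds.
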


\begin{definition} Let $X_\mathrm{slices}$ be the universal covering of $G(\cY_\mathrm{slices})$ and $\widetilde{\Gamma}$ the preimage of $\Gamma$ under the projection $X_\mathrm{slices} \ra Y_\mathrm{slices}$.
\end{definition}

The complex $X_\mathrm{slices}$ can be thought as obtained in the following way. Recall that $Y_\mathrm{slices}$ is obtained from $\Gamma$ by attaching to it a bunch of polygons and identifying slices of such polygons. For such a polygon, any connected component of the preimage of its interior is the interior of a polygon of $X_\mathrm{slices}$. Such polygons of $X$ are glued together according to the same slice identifications procedure. 

Let $P$ be a polygon of $X_\mathrm{slices}$ and consider the polygonal neighbourhood of its apex which is the intersection of $P$ with the preimage of $Y_\mathrm{slices} \setminus \overline{Y}_\mathrm{slices}$ (green region in Figure \ref{radialcollapsing}). We now collapse radially the complement of this neighbourhood (grey region in Figure \ref{radialcollapsing}), simultaneously for every polygon $P$ of $X_\mathrm{slices}$. 

\begin{figure}[H]
\center
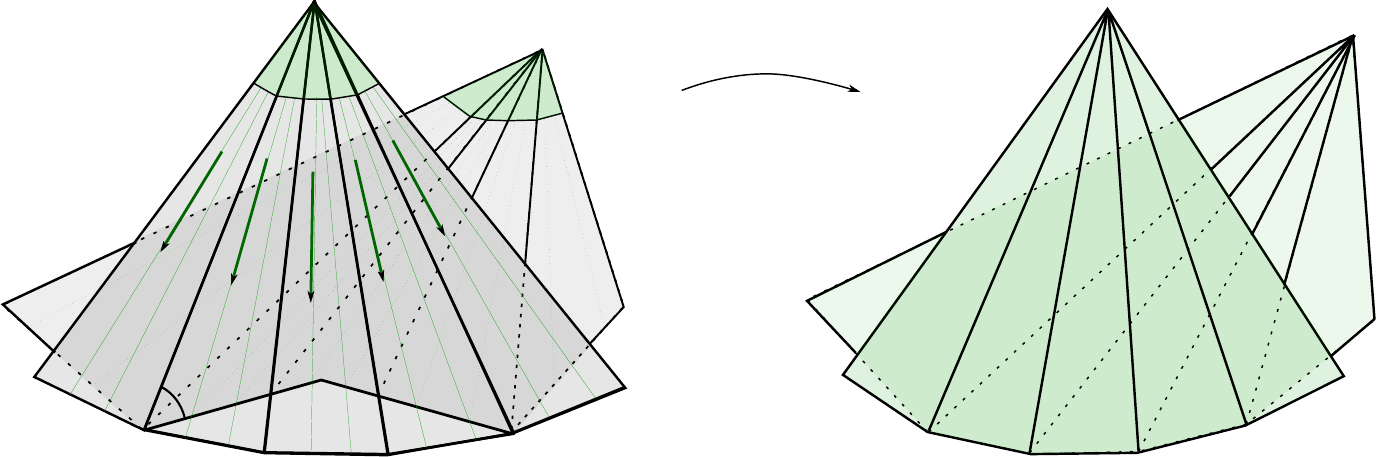
\caption{Radial collapsing.}
\label{radialcollapsing}
\end{figure}
 
\begin{definition} Let $X$ be the space obtained after such collapses and $p_\mathrm{coll}: X_\mathrm{slices} \ra X$ the associated \textit{collapsing map}. The space $X$ inherits from $X_\mathrm{slices}$ a canonical simplicial structure (see Figure \ref{radialcollapsing}). Identifying slices in $X$ yields a \textit{slice identification map} $p_\mathrm{slices}: X \ra X_\mathrm{slices}$. The action of $\rquotient{G}{\ll H_\xi \gg }$ on $X_\mathrm{slices}$ yields an action on $X$ which makes $p_\mathrm{slices}$ equivariant, and we denote by $Y$ the quotient space.
\end{definition}

The space $X$ space is topologically the graph $\widetilde{\Gamma}$ with a bunch of polygons glued to it. Note that $Y$ is obtained from $Y_\mathrm{slices}$ by applying the same collapsing procedure. It is the graph $\Gamma$ with a collection of polygons attached to it. As this can be done without loss of generality, we will consider for the remaining of this section that this collection is reduced to a single polygon, so as to lighten notations.

\begin{lem}
Let $H$ be a non-trivial  finite subgroup of $\rquotient{G}{\ll H_\xi \gg }$. Then the fixed-point set $X^H$ is non-empty and contractible.
\label{contractiblefixedpoint}
\end{lem}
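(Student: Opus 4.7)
The plan is to deduce both parts of the lemma from the CAT(0) geometry of the universal cover $X_\mathrm{slices}$, transferring them to $X$ via the equivariant collapsing map $p_\mathrm{coll}: X_\mathrm{slices} \to X$. By Theorem \ref{NPCcomplexofgroups} and Proposition \ref{CAT(0)1}, the complex $X_\mathrm{slices}$ is a CAT(0) piecewise-Euclidean simplicial complex on which $\rquotient{G}{\ll H_\xi \gg}$ acts by simplicial isometries without inversion. Since $H$ is finite, \cite[Corollary II.2.8]{BridsonHaefliger} gives that $X_\mathrm{slices}^H$ is non-empty and convex, hence contractible.

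For non-emptiness of $X^H$, I would observe that $p_\mathrm{coll}$ is $\rquotient{G}{\ll H_\xi \gg}$-equivariant by construction: the radial collapse in each polygon is defined intrinsically via the radial coordinate from the apex, and the stabilizer of a polygon acts on it by rotations about this apex, hence by radial-preserving isometries. Equivariance then yields $p_\mathrm{coll}(X_\mathrm{slices}^H) \subset X^H$, so $X^H$ is non-empty.

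For contractibility, I would show that $p_\mathrm{coll}$ is an $H$-equivariant homotopy equivalence inducing a homotopy equivalence $X_\mathrm{slices}^H \to X^H$. More precisely, I would realize $p_\mathrm{coll}$ as an equivariant deformation retraction of $X_\mathrm{slices}$ onto a subcomplex homeomorphic to $X$, obtained by interpolating the radial collapse through a one-parameter family of equivariant maps $(p_t)_{t \in [0,1]}$ on $X_\mathrm{slices}$. Restricting this deformation to the $H$-fixed loci yields $X_\mathrm{slices}^H \simeq X^H$, whence contractibility of $X^H$.

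The main obstacle is verifying that the interpolation $(p_t)$ is well-defined in the presence of the slice identifications of $X_\mathrm{slices}$: two identified points $[\xi, x, t] \sim [\xi', x, t]$ lying in distinct cones must have matching trajectories at every stage of the deformation. This reduces to the observation that the radial collapses in $C_\xi$ and $C_{\xi'}$ send such an identified pair consistently to the shared point $x \in A_\xi \cap A_{\xi'}$, so the deformation descends unambiguously to $X_\mathrm{slices}$ and restricts properly to the fixed-point sets.
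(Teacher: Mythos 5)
Your argument for non-emptiness agrees with the paper's: both use the CAT(0) fixed-point theorem in $X_\mathrm{slices}$ and push forward along the equivariant collapse $p_\mathrm{coll}$.

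For contractibility your route is genuinely different from the paper's, and it has a gap. You propose to "realize $p_\mathrm{coll}$ as an equivariant deformation retraction of $X_\mathrm{slices}$ onto a subcomplex homeomorphic to $X$." But $p_\mathrm{coll}$ is a \emph{quotient} map, not a retraction onto a subspace: the radial collapse of each grey ring identifies the outer boundary $\partial P_\xi$ (lying in $\widetilde\Gamma$) with the boundary $\partial N_\xi'$ of the green polygonal neighbourhood. There is no subcomplex of $X_\mathrm{slices}$ homeomorphic to $X$; the union $\widetilde\Gamma \cup \bigcup_\xi N_\xi'$ inside $X_\mathrm{slices}$ is disconnected, since the green neighbourhoods sit in the interiors of the cones and do not meet $\widetilde\Gamma$. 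Indeed your own description of the "trajectory" of $[\xi,x,t]$ down to the boundary point $x\in T$ confirms that the map collapses distinct points of $X_\mathrm{slices}$ to a single point of $X$, which cannot be an inclusion-retraction pair. Moreover a one-parameter family moving grey-region points radially outward while fixing green-region points is discontinuous across $\partial N_\xi'$, so the naive interpolation $(p_t)$ does not define a homotopy $\mathrm{id}_{X_\mathrm{slices}} \simeq p_\mathrm{coll}$ as you need. The idea of showing $p_\mathrm{coll}$ is an \emph{equivariant homotopy equivalence} (so that it restricts to a homotopy equivalence on $H$-fixed sets) could in principle be made to work, but then one has to build an equivariant homotopy inverse (the natural candidate being $p_\mathrm{slices}$) and equivariant homotopies between the composites and the identities -- a nontrivial verification that your sketch does not supply, and which involves subtleties beyond the slice compatibility you flag.

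The paper avoids all of this by a direct, elementary argument: it shows the $1$-skeleton of $X^H$ contains no embedded loop. An embedded loop $\gamma$ in the $1$-skeleton of $X^H$ would, via the slice-identification map $p_\mathrm{slices}$ (which restricts to a homeomorphism on the $1$-skeleton), produce an embedded loop fixed by $H$ in $X_\mathrm{slices}$; convexity of $X_\mathrm{slices}^H$ (this is where CAT(0) is used) then forces some $2$-cell containing a polygon centre to be pointwise fixed by $H$, contradicting Lemma \ref{stabpolygon}. This bypasses any homotopy-theoretic analysis of $p_\mathrm{coll}$ and gets contractibility directly from the combinatorial structure of $X$.
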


\begin{proof}
Let $H$ be a non-trivial  finite subgroup of $\rquotient{G}{\ll H_\xi \gg }$. Since $X_\mathrm{slices}$ is CAT(0), $H$ fixes a point of $ X_\mathrm{slices}$ by \cite[Corollary II.2.8]{BridsonHaefliger}. The collapsing map $p_\mathrm{coll}: X_\mathrm{slices} \ra X$ being equivariant, $H$ also fixes a point of $X$, hence the fixed-point set $X^H$ is non-empty. 

Let $\gamma$ be a loop contained in the $1$-skeleton of the fixed-point set $X^H$.  Note that the restriction of the slice-identification map $p_\mathrm{slices}:X \ra X_\mathrm{slices} $ to the $1$-skeleton of $X$ is a homeomorphism onto its image. If $\gamma$ was an embedded loop, then its image in $X_\mathrm{slices}$ would be an embedded loop fixed by $H$. As $X_\mathrm{slices}$ is CAT(0), the fixed-point set $X_\mathrm{slices}^H$ would be convex by \cite[Corollary II.2.8]{BridsonHaefliger}, thus there would exist a $2$-cell of $X_\mathrm{slices}$ containing the centre of a polygon of $X_\mathrm{slices}$ and fixed by $H$, which contradicts Lemma \ref{stabpolygon}. Thus, the $1$-skeleton of $X^H$ contains no embedded loop, hence $X^H$ is contractible.
\end{proof}

\begin{proof}[Proof of Theorem \ref{PScombinationEG}] This follows from the previous lemma together with Theorem \ref{combinationEG}.
\end{proof}

\section{Hyperbolicity of the small cancellation quotient}
\label{hyperbolicityquotient}
In this section we prove the following: 

\begin{thm}
There exists a universal constant $0<\lambda_\mathrm{univ}\leq \frac{1}{6}$ such that the following holds. Let $G(\Gamma)$ be a graph of groups over a finite simplicial graph, with fundamental group $G$ and Bass--Serre tree $T$. Let $(A_\xi, H_\xi)_{\xi \in \Xi}$ a rotation family satisfying conditions {\normalfont(RF$_1$)} and {\normalfont (RF$_2$)}. Suppose in addition that:
\begin{itemize}
\item[{\normalfont (HC$_1$)}] The local groups of $G(\Gamma)$ are hyperbolic and all the local maps are quasiconvex embeddings,
\item[{\normalfont (HC$_2$)}] The action of $G$ on $T$ is acylindrical, 
\item[{\normalfont (HC$_3$)}] There are only finitely many elements in $\Xi$ modulo the action of $G$. 
\end{itemize}
If $(A_\xi, H_\xi)_{\xi \in \Xi}$ satisfies the geometric small cancellation condition $C''(\lambda_\mathrm{univ})$, then the quotient group $\rquotient{G}{\ll H_\xi \gg }$ is hyperbolic and the projection $G \ra \rquotient{G}{\ll H_\xi \gg }$ embeds each local group of $G(\Gamma)$ as a quasiconvex subgroup.
\label{PScombinationhyperbolic}
\end{thm}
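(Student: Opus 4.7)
The plan is to apply the acylindrical combination theorem for hyperbolic groups (Theorem \ref{combinationhyperbolic}) to the non-positively curved complex of groups $G(\cY_\mathrm{slices})$ produced by the construction of Section \ref{Gromovconstruction}. By Theorem \ref{NPCcomplexofgroups} this complex of groups has fundamental group $\rquotient{G}{\ll H_\xi \gg}$ and CAT(0) universal cover $X_\mathrm{slices}$, and by condition (HC$_3$) the base complex $Y_\mathrm{slices}$ is finite. It then suffices to verify the three hypotheses of Theorem \ref{combinationhyperbolic}: hyperbolicity of $X_\mathrm{slices}$, hyperbolicity of local groups with quasiconvex local maps, and acylindricity of the action of $\rquotient{G}{\ll H_\xi \gg}$ on $X_\mathrm{slices}$. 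The conclusion of Theorem \ref{combinationhyperbolic} will then give hyperbolicity of $\rquotient{G}{\ll H_\xi \gg}$ and quasiconvex embedding of the local groups of $G(\cY_\mathrm{slices})$, from which the quasiconvex embedding of the local groups of $G(\Gamma)$ follows by factoring through the corresponding vertex stabilisers of $\overline{T}_\mathrm{slices}$.

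The identification of local groups in Section \ref{Gromovconstruction} (Lemmas \ref{stabiliserinterval}--\ref{stabpolygon} and the following corollary) shows that each local group of $G(\cY_\mathrm{slices})$ is either a finite cyclic quotient of some $S_\xi$, or a subgroup of $G$ stabilising a subcomplex of $T$. The cyclic quotients are finite, hence hyperbolic. For a subgroup of $G$ stabilising a segment $I_u \subset T$, one argues as follows: by (HC$_2$) there exists a uniform bound $N$ such that the $G$-stabiliser of any segment of $T$ of length at least $N$ is finite, so we may assume $|I_u| < N$; such a stabiliser is a finite intersection of vertex and edge stabilisers of $T$, which by (HC$_1$) are hyperbolic and mutually related by quasiconvex embeddings, so standard results on intersections of quasiconvex subgroups in hyperbolic groups yield both hyperbolicity of the intersection and quasiconvexity of the local maps of $G(\cY_\mathrm{slices})$.

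Acylindricity of the action of $\rquotient{G}{\ll H_\xi \gg}$ on $X_\mathrm{slices}$ should follow from a combination of (HC$_2$), (RF$_2$), and the structure of the polygons $K_\xi$. The key observation is that there is a collapsing map $X_\mathrm{slices} \to \widetilde{\Gamma}$ (and further to the Bass--Serre tree $T$) obtained by radially collapsing the polygons, analogous to the one used in Section \ref{EG}. Stabilisers in $\rquotient{G}{\ll H_\xi \gg}$ of pairs of points of $X_\mathrm{slices}$ at large distance either project to stabilisers in $G$ of pairs of points of $T$ at large distance, which are finite by (HC$_2$), or are concentrated in a uniformly bounded region of a single polygon or cone, in which case they are controlled by the cyclic global axial stabilisers (RF$_2$) and by the finiteness of vertex stabilisers of $K_\xi$ away from its centre (Lemma \ref{stabpolygon}).

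The main obstacle, and the step that dictates the size of $\lambda_\mathrm{univ}$, is proving that $X_\mathrm{slices}$ is hyperbolic. The space is CAT(0) by Proposition \ref{CAT(0)1} and Lemma \ref{CAT(0)2}, but Euclidean subcomplexes inside the cones could a priori allow quasiflats. The idea is to choose $\lambda_\mathrm{univ}$ small enough that the $C''(\lambda_\mathrm{univ})$ condition forces $R_\mathrm{min}$ to be large compared with $l_\mathrm{max}$, making the apex angle $2\pi/R_\mathrm{min}$ of each cone small and the critical angle $\theta_c$ close to $\pi/2$; then each cone is long and thin, its slices are far from its apex, and any geodesic traversing a cone stays close to the tree $T$ outside a uniformly bounded neighbourhood of the apex. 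Since $T$ is $0$-hyperbolic, one expects a thin-triangle (equivalently, linear isoperimetric) estimate in $X_\mathrm{slices}$ for $\lambda_\mathrm{univ}$ small enough, using the cones only to handle a bounded combinatorial amount of additional backtracking, in the spirit of classical hyperbolicity criteria for cone-off constructions. This geometric input, together with the two verifications above, lets Theorem \ref{combinationhyperbolic} apply and completes the proof.
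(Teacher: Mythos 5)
Your overall plan coincides with the paper's: apply Theorem \ref{combinationhyperbolic} to $G(\cY_\mathrm{slices})$, and your treatment of the local groups (finite cyclic quotients of $S_\xi$ vs.\ stabilisers of the bounded intervals $I_u$, controlled by (HC$_1$) and (HC$_2$)) is essentially Lemma \ref{quasiconvexembedding}. But the two hardest hypotheses --- hyperbolicity of $X_\mathrm{slices}$ and acylindricity of the action --- are exactly what you leave as informal sketches, and this is where the proposal has a genuine gap. The paper handles both at once by a quasi-isometry reduction that you only partially invoke: the collapsing map $p_\mathrm{coll}:X_\mathrm{slices}\to X$ and the slice-identification map $p_\mathrm{slices}:X\to X_\mathrm{slices}$ are equivariant quasi-isometries and quasi-inverse to each other (possible because the slices have uniformly bounded size by the small cancellation condition), and Lemma \ref{equivisomorphic} identifies the truncated universal cover $\overline{X}$ equivariantly with $\rquotient{\overline{T}}{\ll H_\xi\gg}$. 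This reduces both properties to the hyperbolicity of $\rquotient{\widehat{T}}{\ll H_\xi\gg}$ and the acylindricity of the induced action on it, which Proposition \ref{DahmaniGuirardelOsin} obtains by citing Dahmani--Guirardel--Osin (Propositions 2.17 and 5.22 of that paper, the latter adapted to the present notion of acylindricity using (RF$_2$)). Crucially, the universal constant $\lambda_\mathrm{univ}$ is inherited from that external result; it is not produced by an angle/thin-cone estimate internal to this construction.

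Your proposed direct argument for hyperbolicity of $X_\mathrm{slices}$ (``long thin cones, slices far from the apex, thin-triangle estimate'') is not obviously correct as stated: CAT(0)-ness plus small apex angle does not immediately exclude quasi-flats produced by the global combinatorics of overlapping axes, and making the ``cone-off hyperbolicity criterion'' work here would amount to re-proving a known theorem about rotation families. Likewise, your acylindricity paragraph gestures at a collapse $X_\mathrm{slices}\to T$, but the paper's argument actually needs the equivariant identification of $\overline{X}$ with $\rquotient{\overline{T}}{\ll H_\xi\gg}$ together with the DGO acylindricity result for the quotient action; your case distinction (``project to $T$'' vs.\ ``concentrated near one cone'') does not by itself produce the uniform constants acylindricity requires. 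As written, the proposal correctly identifies what must be proved and correctly handles the local-group hypothesis, but does not actually establish the two geometric inputs; it would need to either carry out the quasi-isometry reduction to the coned-off quotient and cite DGO as the paper does, or give a genuine proof of those two properties.
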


\begin{definition}[Acylindricity]
The action of $G$ on $T$ is said to be \textit{acylindrical} if there exists a uniform upper bound on the distance between two points which are fixed by an infinite subgroup of $G$.
\end{definition}

We assume that $\widehat{T}_\mathrm{slices}$ is given a simplicial structure such that $G$ acts on it by simplicial isomorphisms.

\begin{lem}
The pointwise stabiliser of every simplex of $\widehat{T}_\mathrm{slices}$ is hyperbolic. Moreover, for every inclusion of simplices $\sigma \subset \sigma'$, the inclusion $G_{\sigma'} \hra G_{\sigma}$ is a quasiconvex embedding.
\label{quasiconvexembedding}
\end{lem}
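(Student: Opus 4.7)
The plan is to split simplices of $\widehat{T}_\mathrm{slices}$ into three strata according to where they sit: (a) simplices contained in $T$, (b) simplices in the open cones away from the identified slices, and (c) simplices meeting a slice. In each case, I will identify the pointwise stabiliser as a subgroup of a vertex group of $G(\Gamma)$ and show it is hyperbolic and quasiconvex there, then use transitivity of quasiconvexity in hyperbolic groups to deduce the statement for all inclusions $\sigma\subset\sigma'$.

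For case (a), a simplex $\sigma\subset T$ lies inside a segment $[v_0,e_1,v_1,\ldots,e_n,v_n]$ and has pointwise stabiliser $K_n:=\bigcap_{i=1}^n G_{e_i}$. The acylindricity hypothesis (HC$_2$) supplies a constant $N$ so that for $n>N$ the intersection is finite, hence trivially hyperbolic and quasiconvex in any $G_{v_i}$. For $n\le N$ I argue by induction on $n$, combining three standard facts in hyperbolic groups: (i) intersection of two quasiconvex subgroups is quasiconvex, (ii) if $K\le H\le G$ with $K$ and $H$ quasiconvex in $G$, then $K$ is quasiconvex in $H$, and (iii) quasiconvexity is transitive. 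The base case $K_1=G_{e_1}\hookrightarrow G_{v_0}$ is (HC$_1$). For the inductive step I view $K_i=K_{i-1}\cap G_{e_i}$ inside the hyperbolic group $G_{v_{i-1}}$: by induction $K_{i-1}$ is quasiconvex in $G_{v_{i-1}}$, $G_{e_i}$ is quasiconvex in $G_{v_{i-1}}$ by (HC$_1$), so by (i) the intersection is quasiconvex in $G_{v_{i-1}}$; using (ii) and (iii) it is then quasiconvex in the adjacent edge group $G_{e_i}$, and hence in the next vertex group $G_{v_i}$. This shows $K_n$ is hyperbolic and quasiconvex in every $G_{v_i}$ along the segment, which handles every inclusion $\sigma\subset\sigma'\subset T$.

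Case (b) is immediate from (RF$_2$). A point in the interior of a cone $C_\xi$ that does not lie on any slice is fixed only by elements of $S_\xi$ that fix a point of $A_\xi$, and these are trivial; the apex $O_\xi$ itself has stabiliser $S_\xi\cong\mathbb{Z}$. So the stabilisers appearing here are cyclic or trivial, and every inclusion among them is a quasiconvex embedding of cyclic groups. For case (c), Lemma 5.7 identifies the pointwise stabiliser of any simplex $\sigma$ meeting a slice $C_{\xi,\xi'}$ with the global stabiliser in $G$ of a geodesic subsegment $I_\sigma\subset A_\xi\subset T$ (of length bounded by $l_\mathrm{max}$). The pointwise stabiliser of $I_\sigma$ is already handled by case (a), and the global stabiliser is an extension of it by a subgroup of $\mathbb{Z}/2$ corresponding to a possible flip, which is forbidden when $g\xi=\xi$ by (RF$_2$). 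Hence these stabilisers are hyperbolic. Any inclusion $\sigma\subset\sigma'$ between simplices meeting slices corresponds to an inclusion of subsegments $I_{\sigma'}\subset I_\sigma$ of $T$ (up to the finite flip), so quasiconvexity is reduced to case (a).

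The main obstacle is the treatment of inclusions across strata, in particular $G_{\sigma'}\hookrightarrow G_\sigma$ where $\sigma$ lives in $T$ and $\sigma'$ touches a cone or a slice: here the ambient hyperbolic group one wants to work inside changes, and one must combine the quasiconvexity obtained in case (a) with transitivity. However, once Lemma 5.7 is used to rewrite every cone/slice stabiliser as a segment stabiliser in $T$ (or a finite extension thereof), all such cross-stratum inclusions reduce to inclusions of intersections $K'\subset K$ of edge-group families in $T$, which are quasiconvex inside a common $G_{v_i}$ by the inductive argument in case (a). Finally, (HC$_3$) guarantees finitely many $G$-orbits of axes $A_\xi$, which combined with the acylindricity constant yields uniform control and finishes the proof.
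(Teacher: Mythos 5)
Your proof and the paper's arrive at the same reduction but then diverge in an interesting way. The paper's argument is very short: since $G$ acts on $\widehat{T}_\mathrm{slices}$ without inversion it reduces to point stabilisers; by Lemma \ref{stabiliserinterval} each non-trivial point stabiliser is the global stabiliser of an interval $I_u \subset T$ and hence contains $\bigcap_{v\in I_u} G_v$ with index at most $2$; and then it simply \emph{cites} the preceding lemma (Corollary 9.2 of \cite{MartinBoundaries}) which asserts outright that for nested intervals $I'\subset I$ in $T$, $\bigcap_{v\in I} G_v$ is hyperbolic and quasiconvex in $\bigcap_{v\in I'} G_v$. You instead re-derive that cited fact from first principles, via your induction combining intersection of quasiconvex subgroups, the fact that $K\le H\le G$ with $K,H$ quasiconvex in $G$ forces $K$ quasiconvex in $H$, and transitivity of quasiconvexity through the edge/vertex chain provided by (HC$_1$). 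That inductive core is sound and is a genuine gain in self-containedness over the paper, at the cost of length. Three small corrections are in order. First, the inclusion in your case (c) is written backwards: $\sigma\subset\sigma'$ gives $G_{\sigma'}\subset G_\sigma$, and since bigger intervals have \emph{smaller} stabilisers this corresponds to $I_\sigma\subset I_{\sigma'}$, not $I_{\sigma'}\subset I_\sigma$. Second, the acylindricity cut-off $N$ is superfluous here: the intervals $I_u$ arising from slices have length at most $l_{\mathrm{max}}$, and there are finitely many $G$-orbits of simplices because the quotient $Y_\mathrm{slices}$ is a finite complex, so your finite induction already covers all relevant intervals without appealing to (HC$_2$). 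Third, the handling of the $\mathbb{Z}/2$ flip is correct in spirit but muddled: the flip is not ``forbidden'' in general (only flips from elements with $g\xi=\xi$ are excluded by (RF$_2$)); what you actually need, and what the paper implicitly uses, is that an index-$\leq 2$ overgroup of a quasiconvex subgroup of a hyperbolic group is again hyperbolic and quasiconvex.
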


Before proving the lemma, we recall the following elementary result: 

\begin{lem}[Corollary 9.2 of \cite{MartinBoundaries}]
Let $I' \subset I$ be intervals contained in $T$. Then $\cap_{v \in I} G_v$ is hyperbolic and quasiconvex in $\cap_{v \in I'} G_v$. \qed
\end{lem}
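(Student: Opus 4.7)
The plan is to argue by induction on the cardinality $|I|$ of the vertex set of $I$, deferring the case of an infinite interval to the end. The case $|I|=1$ is trivial: then $I' = I$ is forced and the single local group $G_v$ is hyperbolic by (HC$_1$).

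For the inductive step, suppose the statement holds for all pairs $J' \subseteq J$ with $|J| < |I|$. If $I' = I$ there is nothing to do, so assume $I' \subsetneq I$ and choose an endpoint $w$ of $I$ lying outside $I'$; let $v$ be its unique neighbour in $I$, and set $I'' = I \setminus \{w\}$, so that $I' \subseteq I''$ and $|I''| < |I|$. The pointwise stabiliser then decomposes as
$$\bigcap_{u \in I} G_u \;=\; \Big(\bigcap_{u \in I''} G_u\Big) \cap G_w \;=\; \Big(\bigcap_{u \in I''} G_u\Big) \cap G_e,$$
where $e=[v,w]$ and $G_e = G_v \cap G_w$ is the corresponding edge stabiliser. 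By (HC$_1$), $G_e$ is quasiconvex in the hyperbolic local group $G_v$, and the inductive hypothesis applied to $\{v\} \subseteq I''$ provides that $\bigcap_{u \in I''} G_u$ is hyperbolic and quasiconvex in $G_v$.

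I now invoke two standard facts about quasiconvex subgroups of hyperbolic groups: (a) the intersection of two quasiconvex subgroups is quasiconvex in each of them, and therefore hyperbolic; and (b) quasiconvex inclusions compose, i.e.\ if $A \subseteq B \subseteq C$ with $A$ quasiconvex in $B$ and $B$ quasiconvex in $C$ (hyperbolic), then $A$ is quasiconvex in $C$. Applied inside the hyperbolic group $G_v$ to the two quasiconvex subgroups $\bigcap_{u \in I''} G_u$ and $G_e$, fact (a) yields that $\bigcap_{u \in I} G_u$ is hyperbolic and quasiconvex in $\bigcap_{u \in I''} G_u$. Combined with the inductive hypothesis applied to $I' \subseteq I''$, fact (b) then upgrades this to quasiconvexity in $\bigcap_{u \in I'} G_u$, completing the induction.

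The main obstacle is the pair of facts (a) and (b), which are classical but non-trivial consequences of the undistortedness of quasiconvex subgroups in hyperbolic groups; I would appeal to standard references (e.g.\ work of Short, Arzhantseva) rather than reproving them. For an infinite interval $I$, acylindricity (HC$_2$) forces $\bigcap_{u \in I} G_u$ to be finite as soon as $I$ contains a finite subinterval longer than the acylindricity constant, since any infinite subgroup fixing such an interval pointwise would violate the uniform bound. A finite subgroup is then trivially hyperbolic and quasiconvex inside $\bigcap_{u \in I'} G_u$, and the latter is handled by passing to a sufficiently long finite subinterval of $I'$ if $I'$ is infinite as well, thereby reducing the whole statement to the finite case treated above.
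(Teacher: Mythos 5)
The paper does not actually prove this lemma: it is imported verbatim as Corollary 9.2 of \cite{MartinBoundaries}, so there is no internal argument to compare yours against. Taken on its own terms, your induction is correct in the setting where the lemma is used (Section 7, with (HC$_1$) and (HC$_2$) in force). The key points all check out: removing an endpoint $w$ of $I$ with neighbour $v$ turns $\bigcap_{u\in I}G_u$ into the intersection, inside the hyperbolic group $G_v$, of the quasiconvex subgroups $\bigcap_{u\in I\setminus\{w\}}G_u$ (inductive hypothesis) and $G_e=G_v\cap G_w$ (a conjugate of a local map, quasiconvex by (HC$_1$)); Short's theorem on intersections of quasiconvex subgroups plus undistortedness then gives quasiconvexity of the intersection in each factor, and transitivity of quasiconvex inclusions finishes the step. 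Your reduction of the infinite-interval case to the finite one via acylindricity is also sound, since an infinite pointwise stabiliser of a long interval would contradict (HC$_2$), and finite subgroups are quasiconvex in anything. Two small caveats: the argument implicitly assumes $I'$ is nonempty (which is consistent with how the lemma is applied to the intervals $I_u$ in Lemma \ref{quasiconvexembedding}), and the two ``standard facts'' you invoke, while classical, carry essentially all the analytic content of the proof --- which is presumably why the author outsources the whole statement to \cite{MartinBoundaries} rather than reproving it here.
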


\begin{proof}[Proof of Lemma \ref{quasiconvexembedding}]
Since $G$ acts on $\widehat{T}_\mathrm{slices}$ without inversion, we consider point stabilisers. Let $u$ be a point of $\widehat{T}_\mathrm{slices}$ and consider the interval $I_u \subset T$ defined in \ref{stabinterval}. Let $v_1, \ldots, v_n$ be the vertices of $I_u$. The stabiliser of $u$ is the global stabiliser of $I_u$ by Lemma \ref{stabiliserinterval}, so it contains $\bigcap_{1 \leq i \leq n} G_{v_i}$ as an index $\leq 2$ subgroup. The result now follows from Lemma \ref{quasiconvexembedding}.
\end{proof}

\begin{definition}[truncated universal cover]
As in Definition \ref{truncated}, it is possible to write the complex of groups $G(\cY)$ as a tree of complexes of groups, with one piece corresponding to a complex of groups $G(\overline{\cY})$ over the subcomplex $\overline{Y}$ obtained from $Y$ by removing small polygonal neighbourhoods of the centres of polygons of $Y$, and such that  $G(\overline{\cY})$ is isomorphic to the complex of groups obtained from the action of $G$ on $\overline{T}$. We define the \textit{truncated universal cover} $\overline{X}$ as the preimage of $\overline{Y}$ under the quotient map $X \ra Y$.
\end{definition}
\begin{lem}
The truncated universal cover $\overline{X}$ is $\rquotient{G}{\ll H_\xi \gg }$-equivariantly isomorphic to the quotient of the truncated coned-off space $\rquotient{\overline{T}}{\ll H_\xi \gg }$.
\label{equivisomorphic}
\end{lem}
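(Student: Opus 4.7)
The plan is to identify both $\overline{X}$ and $\rquotient{\overline{T}}{\ll H_\xi \gg}$ as developments of the complex of groups $G(\overline{\cY})$ associated to comparable morphisms, and then invoke functoriality of the development construction. By the definition of $\overline{X}$ just above, $G(\overline{\cY})$ is (isomorphic to) the complex of groups associated to the action of $G$ on $\overline{T}$; hence by Theorem \ref{developabilityalgebraic} the canonical morphism $F_1 : G(\overline{\cY}) \to G$, which is injective on local groups, has development equivariantly isomorphic to $\overline{T}$ with its natural $G$-action.

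The tree-of-complexes-of-groups decomposition of Section \ref{trees} presents $G(\cY)$ as an amalgam containing $G(\overline{\cY})$ as a sub-complex of groups: the inclusion $\iota : G(\overline{\cY}) \hookrightarrow G(\cY)$ is an immersion whose underlying simplicial map is $\overline{Y} \hookrightarrow Y$ and whose local homomorphisms are identities. Composing with the canonical morphism $G(\cY) \to \rquotient{G}{\ll H_\xi \gg}$ yields a morphism $\varphi = q \circ F_1 : G(\overline{\cY}) \to \rquotient{G}{\ll H_\xi \gg}$, where $q : G \to \rquotient{G}{\ll H_\xi \gg}$ is the quotient map. The morphism $\varphi$ is injective on local groups since each such group is a subgroup of a local group of $G(\Gamma)$ and the quotient $q$ embeds every local group of $G(\Gamma)$ by the corollary at the end of Section \ref{Gromovconstruction}. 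I claim that the development of $\varphi$ is equivariantly isomorphic to $\overline{X}$. Indeed, by the explicit construction of the universal cover $X$ of $G(\cY)$ in Theorem \ref{developabilityalgebraic}, $X$ is glued from copies $\rquotient{G}{\ll H_\xi \gg}/\varphi_\sigma(G_\sigma)$ of simplices $\sigma$ of $Y$ according to the local data of $G(\cY)$; because $G(\overline{\cY})$ and $G(\cY)$ share the same local groups and local maps on $\overline{Y}$, the preimage of $\overline{Y}$ under $X \to Y$ coincides with the development of $\varphi$ on the nose.

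Once this identification is secured, the factorisation $\varphi = q \circ F_1$ together with the general functoriality of the development construction yields an equivariant isomorphism
$$\overline{X} \cong \rquotient{\overline{T}}{\ker q} = \rquotient{\overline{T}}{\ll H_\xi \gg},$$
in which $\ll H_\xi \gg \subset G$ acts on $\overline{T}$ via its natural action and the residual $\rquotient{G}{\ll H_\xi \gg}$-action on the quotient matches that on $\overline{X}$ by construction. The main obstacle is the combinatorial identification in the previous paragraph: one must carefully track the gluing data through the amalgamation construction of Section \ref{trees} and verify that its restriction to $\overline{Y}$ matches exactly the prescription for the development of $\varphi$ produced by Theorem \ref{developabilityalgebraic}.
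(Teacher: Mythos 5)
Your proof follows the same route as the paper: identify $\overline{X}$ as the development of the morphism $G(\overline{\cY}) \to \rquotient{G}{\ll H_\xi \gg}$ obtained by composing the canonical morphism $G(\overline{\cY}) \to G$ (whose development is $\overline{T}$) with the quotient map $q$, and then conclude from the fact that passing to a quotient of the target group corresponds to quotienting the development by the kernel. Where you appeal to "general functoriality of the development construction" for the final step, the paper cites the precise statement, Theorem~III.$\cC$.2.18 of Bridson--Haefliger; this is the same fact, and your verification that $\varphi$ is injective on local groups (via the embedding corollary and the structure of stabilisers for the $G$-action on $\overline{T}$) supplies the hypothesis needed to invoke it.
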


\begin{proof}
We have a morphism $G(\overline{\cY}) \ra G$ whose associated development is $\overline{T}$. The morphism $G(\cY) \ra \rquotient{G}{\ll H_\xi \gg }$, whose development is the universal cover $X$, restricts to a morphism $G(\overline{\cY}) \ra \rquotient{G}{\ll H_\xi \gg }$ whose associated development is $\overline{X}$. As the kernel of the projection map $G \ra \rquotient{G}{\ll H_\xi \gg }$ is exactly $\ll  H_\xi  \gg$, the result follows from \cite[Theorem III.$\cC$.2.18]{BridsonHaefliger}.
\end{proof}

We recall the following properties of actions of rotation families:

\begin{prop}
There exists a universal constant $0<\lambda_\mathrm{univ}\leq \frac{1}{6}$ such that the following holds. Suppose that the rotation family $(A_\xi, H_\xi)_{\xi \in \Xi}$  satisfies the geometric $C''(\lambda_\mathrm{univ})$ small cancellation condition. Then:
\begin{itemize}
\item the complex $\rquotient{T}{\ll H_\xi\gg }$, and hence $\rquotient{\widehat{T}}{\ll H_\xi\gg }$, is hyperbolic;
\item if the action of $G$ on $T$ is acylindrical, then the action of $\rquotient{G}{\ll H_\xi\gg }$ on $\rquotient{T}{\ll H_\xi\gg }$ is also acylindrical. 
\end{itemize}
\label{DahmaniGuirardelOsin}
\end{prop}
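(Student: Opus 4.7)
\medskip

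\textbf{Proof plan.} The statement is essentially a specialisation of the Dahmani--Guirardel--Osin and Coulon machinery for very rotating families to the present tree-with-cones setting, so my approach is to verify their hypotheses and then invoke their main theorems. The plan proceeds in three steps.

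\emph{Step 1 (reduction from $T$ to $\widehat{T}$).} First I would observe that the two statements for $\rquotient{T}{\ll H_\xi \gg}$ and $\rquotient{\widehat{T}}{\ll H_\xi \gg}$ are equivalent up to universal constants. Indeed, $\widehat{T}$ deformation retracts equivariantly onto $T$ by radially collapsing each cone $C_\xi$ onto its base $A_\xi$, and since the cones have uniformly bounded radius $r$ (depending only on $R_{\min}$), this retraction descends to a quasi-isometry between $\rquotient{\widehat{T}}{\ll H_\xi\gg}$ and $\rquotient{T}{\ll H_\xi\gg}$ that is equivariant under $\rquotient{G}{\ll H_\xi\gg}$. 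Hyperbolicity and acylindricity of the action are invariants of equivariant quasi-isometry, so it suffices to establish both bullets for $\widehat{T}$.

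\emph{Step 2 (hyperbolicity of the quotient).} The space $\widehat{T}$ is hyperbolic with constant bounded in terms of $r$ alone, because $T$ is $0$-hyperbolic and each cone $C_\xi$ has diameter at most $2r$. The family $(A_\xi, H_\xi)$ is a rotation family in which each generator $g_\xi$ of $H_\xi$ acts on $C_\xi$ by rotation at the apex of angle $2\pi \cdot l(g_\xi)/R_{\min} \geq 2\pi$. The $C''(\lambda)$ condition translates directly into the smallness hypothesis needed in the very rotating family framework: pairwise overlaps $A_\xi \cap A_{\xi'}$ are bounded by $l_{\max} < \lambda R_{\min}$, hence they are a small fraction of the injectivity radius of the corresponding subgroup action. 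For $\lambda_\mathrm{univ}$ chosen small enough (in terms of universal hyperbolicity constants only), the hypotheses of the quotient theorem of Dahmani--Guirardel--Osin (or Coulon) are satisfied, yielding hyperbolicity of $\rquotient{\widehat{T}}{\ll H_\xi\gg}$. This bullet does not use any of the hypotheses (HC$_1$)--(HC$_3$).

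\emph{Step 3 (preservation of acylindricity).} For the second bullet I would proceed by lifting. Suppose an infinite subgroup $\overline{K}$ of $\rquotient{G}{\ll H_\xi\gg}$ fixes two points $\overline{u},\overline{v}$ of $\rquotient{\widehat{T}}{\ll H_\xi\gg}$ that are far apart. The standard small cancellation lifting lemma (again in the form given by Dahmani--Guirardel--Osin) states that a geodesic segment in the quotient avoiding deep interiors of the cones lifts to a quasi-geodesic in $\widehat{T}$ of comparable length, and that the stabiliser of this lift in $G$ surjects onto (a finite-index subgroup of) $\overline{K}$. Two cases arise: either the lifted stabiliser contains an infinite subgroup of $G$ fixing two points of $T$ arbitrarily far apart, contradicting the acylindricity hypothesis (HC$_2$); or the stabiliser is commensurable to one of the $H_\xi$, and then it projects to a finite subgroup of $\rquotient{G}{\ll H_\xi \gg}$, contradicting the assumption that $\overline{K}$ is infinite. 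Either way the diameter of the fixed-point set of $\overline{K}$ is uniformly bounded, giving acylindricity. The use of condition (HC$_3$) enters only to ensure that the constants involved in the lifting lemma can be chosen uniformly over all $\xi \in \Xi$.

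\emph{Main obstacle.} The delicate point is calibrating $\lambda_\mathrm{univ}$ so that a single universal constant simultaneously ensures (i) hyperbolicity of $\rquotient{\widehat{T}}{\ll H_\xi\gg}$, (ii) validity of the lifting lemma for geodesics in the quotient, and (iii) the very rotating condition that prevents any infinite stabiliser of the lift from escaping into a conjugate of some $H_\xi$ without detection. I expect this to be handled by a direct appeal to the DGO or Coulon framework rather than proved from scratch, as the computations are standard but intricate.
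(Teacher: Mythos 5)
Your proposal follows essentially the same route as the paper: both bullets are delegated to the Dahmani--Guirardel--Osin framework (the paper cites their Proposition 2.17 for hyperbolicity and rewrites their Proposition 5.22 for acylindricity), and the overall architecture -- reduce to $\widehat{T}$, invoke DGO for hyperbolicity, lift the action for acylindricity -- matches the paper's. Your Step 1 (the equivariant quasi-isometry between $T$, $\widehat T$ and their quotients) is a harmless preliminary that the paper does not spell out but implicitly uses.

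The place where you drift from the paper, and where your argument is shakier, is Step 3. You appeal to a ``standard small cancellation lifting lemma'' asserting that \emph{the stabiliser of a lifted quasi-geodesic in $G$ surjects onto a finite-index subgroup of $\overline K$}. The DGO lifting machinery gives you length-preserving lifts of short geodesics avoiding apices, but not, as a black box, this stabiliser surjection; that is precisely the content that needs to be established. The paper does this in an elementary, element-by-element way: it lifts a single $\overline g$ fixing $\overline a,\overline b$ to $g\in G$ with $gb=b$ and $ga=ra$ for some $r\in\ll H_\xi\gg$, and then dispatches the two cases. The key mechanism is the use of (RF$_2$), which you never name: if $r\neq 1$, DGO's analysis forces $g$ to stabilise some axis $A_\xi$, and (RF$_2$) rules out any nontrivial element fixing $A_\xi$ pointwise, so $g$ would be hyperbolic, contradicting $gb=b$. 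Your alternative contradiction (``the stabiliser is commensurable to some $H_\xi$, hence has finite image, contradicting $\overline K$ infinite'') is morally the same phenomenon, but the commensurability claim is exactly the content of (RF$_2$), and the step from ``stabiliser of the lift'' to ``$\overline K$'' is the unproved surjection above. In short, the dichotomy you propose is not self-evident; the paper replaces it with a clean trichotomy-free argument about a single lifted element, explicitly powered by (RF$_2$). Your plan would become a correct proof once the lifting statement is reformulated at the level of individual elements rather than subgroups.
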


\begin{proof}
The first part is a direct consequence of \cite[Proposition 2.17]{DahmaniGuirardelOsin}.
The second part follows from rewriting the proof of \cite[Proposition 5.22]{DahmaniGuirardelOsin} using the notion of acylindricity used in this article. While this can be done without any essential change, we briefly explain how to do so. 

First notice that since $G$ acts acylindrically on $T$, it also acts acylindrically on $\widehat{T}$. Let $\overline{a}, \overline{b}$ be points of  $\rquotient{\widehat{T}}{\ll H_\xi\gg }$ and $\overline{g}$  be an element of $\rquotient{G}{\ll H_\xi\gg }$ fixing them both. Note that it is enough to consider the case of points which are images of vertices of $T$ in the quotient $\rquotient{\widehat{T}}{\ll H_\xi\gg }$. We choose lifts $a, b \in \widehat{T}$  such that $d(a,b) = d(\overline{a}, \overline{b})$, a lift $g$ of $\overline{g}$ such that $gb=b$ and an element $r$ of $\ll H_\xi \gg$ such that $ga=ra$. If $r$ is the trivial element, then the result follows from the acylindricity of the action of $G$ on $\widehat{T}$. Suppose by contradiction that $r$ is non-trivial. Dahmani--Guirardel--Osin show in that case that $g$ also stabilises some axis $A_\xi$. Condition (RF$_2$) implies that the only element of $g$ fixing pointwise such an axis is the identity element. Thus, $g$ induces a non-trivial translation on that axis and $g$ is hyperbolic, which contradicts the fact that $g$ fixes $b$. 
\end{proof}

\begin{proof}[Proof of Theorem \ref{PScombinationhyperbolic}]
The collapsing map $p_\mathrm{coll}:X_\mathrm{slices}\ra X$ and the slice-identification map $p_\mathrm{slices}: X \ra X_\mathrm{slices}$ are equivariant quasi-isometries and are quasi-inverses of one another. Thus, it follows from Lemma \ref{equivisomorphic} and Proposition \ref{DahmaniGuirardelOsin} that the action of $\rquotient{G}{\ll H_\xi\gg }$ on $X_\mathrm{slices}$ is acylindrical and the complex $X_\mathrm{slices}$ is hyperbolic. This space is also CAT(0) by Theorem \ref{NPCcomplexofgroups}. Finally, stabilisers of simplices of $X_\mathrm{slices}$ are hyperbolic and inclusions of such stabilisers are quasiconvex embeddings by Lemma \ref{quasiconvexembedding}. The result thus follows from Theorem \ref{combinationhyperbolic}.
\end{proof}

\section{The geometry of $\widehat{T}_\mathrm{slices}$.}
\label{CAT0}

We now prove Proposition \ref{CAT(0)1}. Note that $\overline{T}_\mathrm{slices}$ being homotopy equivalent to the Bass--Serre tree $T$, it is contractible, hence we only have to prove that it is locally CAT(0) \cite[Theorem II.4.1]{BridsonHaefliger}. Since $\overline{T}_\mathrm{slices}$ is a $2$-dimensional complex, it is enough to prove that injective loops in links of points of $\overline{T}_\mathrm{slices}$ have length at least $2 \pi$ \cite[Theorem II.5.5]{BridsonHaefliger}. As this condition is preserved by taking subcomplexes, it is thus enough to prove that the modified coned-off space $\widehat{T}_\mathrm{slices}$ itself is CAT(0). In the case of classical small cancellation theory, this result was proved by Vinet (unpublished).\\

There are three types of points in $\widehat{T}_\mathrm{slices}$: apices of cones of $\widehat{T}_\mathrm{slices}$, points in the Bass-Serre tree $T$ and points in the interior of a cone.\\

\noindent \textbf{Apex of a cone.} Each apex of a cone of $\widehat{T}_\mathrm{slices}$ has a link simplicially isomorphic to a bi-infinite line, hence the result.\\

\noindent \textbf{Point in the interior of a cone.} Let $u$ be a point that is in the interior of a cone but is not an apex. A neighbourhood of $u$ in $\widehat{T}_\mathrm{slices}$ is obtained from neighbourhoods of $u$ in the various cones containing it by gluing them together in an appropriate way. First notice that if does not belong to any slice $C_{\xi, \xi'}$, then a sufficiently small neighbourhood of $u$ is isometric to a small neighbourhood of the unique point of $\widehat{T}$ projecting to $u$. As cones of $\widehat{T}$ are CAT(0), the result follows. 

Suppose now that $u$ belongs to some slice $C_{\xi, \xi'}$. Let $\Xi_u$ be the set of $\xi$ such that $C_\xi$ contains $u$ and let $\xi \in \Xi_u$. A polygonal neighbourhood of $u=[ \xi, x, t]$ in $C_\xi \in \Xi$ is obtained as follows. Using the construction described in Definition \ref{stabinterval}, we choose a point $x$ of $T \cap C_\xi$ that sees the apex $O_\xi$ and $u$ with an unoriented angle $\theta_c$. The geodesic line between $x$ and $u$, along with it symmetric with respect to the radius $[O_\xi, u)$ define two lines meeting at $u$, which we use to define four small segments $a_\xi, b_\xi, c_\xi, d_\xi$ issuing from $u$, with an unoriented angle $\theta(t)$ and $\pi - \theta(t)$ with the ray $[O_\xi, u)$, as depicted in Figure \ref{link1} (note that we have $\theta(t) \geq \theta_c \geq \frac{\pi}{3}$). 

\begin{figure}[H]
\center
\scalebox{0.65}{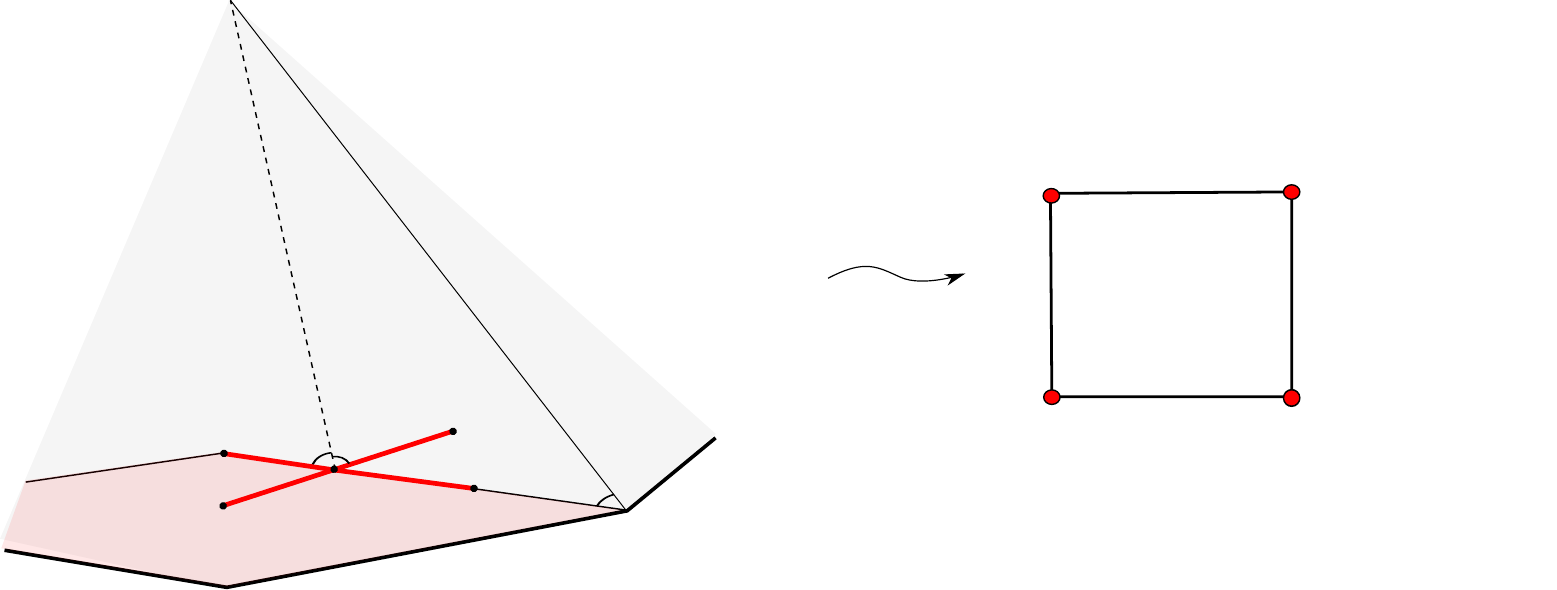}
\caption{The link $\mbox{lk}(u, C_\xi)$.}
\label{link1}
\end{figure}

We now explain how these graphs are glued together under the identifications defining $\widehat{T}_\mathrm{slices}$. Let $\xi, \xi' \in \Xi_u$ and let us look at $u$ inside $C_\xi$. If $u$ belongs to the interior of the slice $C_{\xi, \xi'}$, then the two graphs are identified in the obvious way. If $u$ belongs to the boundary of $C_{\xi, \xi'}$, then we are in the configuration of Figure \ref{link1} and the two graphs are glued as follows: 

\begin{figure}[H]
\center
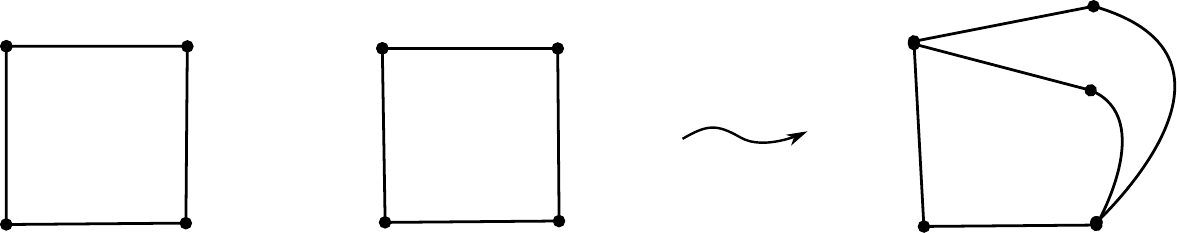
\caption{Some link identifications.}
\label{linkidentification1}
\end{figure}

Thus, the link of $u$ in $\widehat{T}_\mathrm{slices}$ is a graph without loop or double edge, and which has four types of vertices: a vertex $a$ corresponding to edges $a_\xi$ after identification, a vertex $b$ corresponding to edges $b_\xi$ after identification, vertices $c_1, c_2, \ldots$ corresponding to edges $c_\xi$, which are of valence at least $2$,
and vertices $d_1, d_2, \ldots$ corresponding to edges $d_\xi$, which are of valence at least $2$. Moreover, the following holds:
\begin{itemize}
 \item There is exactly one edge between $a$ and $b$ (of length $2\theta(t)$).
 \item There is exactly one edge between $a$ and each $d_i$ (of length $\pi - 2\theta(t)$) and  exactly one edge between $b$ and each $c_i$ (of length $\pi - 2\theta(t)$).
 \item The graph is bipartite with respect to the decomposition of the set of vertices into the sets $\{a\} \cup \{c_1, c_2, \ldots\}$ and $\{b\} \cup \{d_1, d_2 \ldots\}$. 
 \item Edges of the form $[c_i, d_j]$ are of length $2 \theta(t)$.
\end{itemize}

\begin{figure}[H]
\center
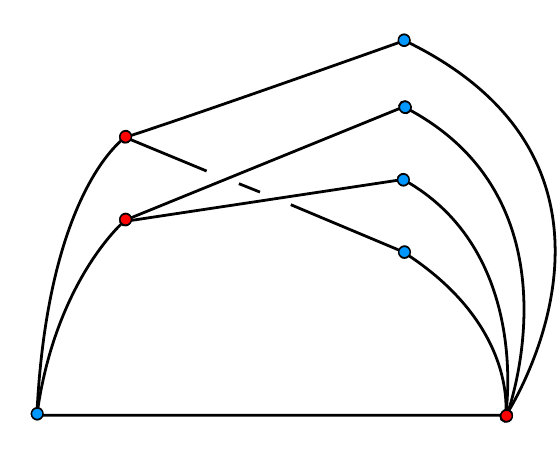
\caption{The link $\mbox{lk}(u, \widehat{T}_\mathrm{slices})$.}
\label{linkgraph1}
\end{figure}

\begin{lem}
 An injective loop in the link $\mbox{lk}(u,\widehat{T}_\mathrm{slices})$ has length at least $2\pi$.
\label{CAT(0)3}
\end{lem}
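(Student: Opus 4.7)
The plan is to exploit the explicit structure of the link $\mbox{lk}(u,\widehat{T}_\mathrm{slices})$ described just above --- a ``book'' of $4$-cycles hinged along the edge $[a,b]$, with one page per cone $C_\xi$ containing $u$ --- to enumerate the injective loops and bound their lengths one at a time.

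The starting observation is that the bipartition $\{a\}\cup\{c_i\}_i$ versus $\{b\}\cup\{d_j\}_j$ forces every injective loop to have even combinatorial length, and by the absence of double edges, length at least four. The key combinatorial fact is that each $c_i$ has valence exactly two, with neighbours $b$ (joined by an edge of length $\pi-2\theta(t)$) and the unique $d_j$ coming from the same cone (joined by an edge of length $2\theta(t)$), and symmetrically for each $d_j$. A short case analysis using this valence constraint shows that any injective loop must contain both $a$ and $b$: at any $c$- or $d$-vertex, both outgoing edges are predetermined, so avoiding $a$ or $b$ forces an immediate backtrack.

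Having reduced to loops visiting both $a$ and $b$, I would analyze the two arcs into which $\{a,b\}$ splits the loop. An arc cannot have combinatorial length $2$, since $a$ and $b$ share no common neighbour other than each other, and cannot have combinatorial length $\geq 4$, since after the initial step $a-d_j-c_i$ the only unvisited neighbour of $c_i$ is $b$. So each arc has length $1$ or $3$, leaving only two possibilities for the loop (after discarding the degenerate ``double edge''): a $4$-cycle $a-b-c_i-d_j-a$ inside a single page, or a $6$-cycle $a-d_{j_1}-c_{i_1}-b-c_{i_2}-d_{j_2}-a$ spanning two distinct pages. Their lengths are $2\cdot 2\theta(t)+2(\pi-2\theta(t))=2\pi$ and $2\cdot 2\theta(t)+4(\pi-2\theta(t))=4\pi-4\theta(t)$ respectively, so the desired bound reduces to the single inequality $\theta(t)\leq\pi/2$.

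This last inequality is the main analytic content of the argument and is the step I expect to be the genuine obstacle. I would establish it by applying the law of sines in the flat Euclidean triangle $O_\xi x u$ (which lies in a slice, hence embeds isometrically in the plane), obtaining $\sin\theta(t)=(r/t)\sin\theta_c$. Combined with Lemma \ref{lemmetechnique2}, which gives $t\geq r\sin\theta_c$ for any point in a slice, this forces $\sin\theta(t)\leq 1$; the correct branch of $\arcsin$ is then selected by the continuity of $\theta(t)$ in $t$ together with the limiting value $\theta(t)=\theta_c$ on the base of the cone, yielding $\theta(t)\in[\theta_c,\pi/2]$ and completing the verification.
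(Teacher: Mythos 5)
Your argument and the paper's share the opening moves (bipartite implies even length, no double edges implies at least four edges) but then diverge, and the divergence introduces a real gap. The paper observes that if one deletes all ``long'' edges (those of the form $[a,b]$ or $[c_i,d_j]$, of length $2\theta(t)$) except a single one, the remaining graph --- the two stars at $a$ and $b$ joined by that one edge --- is a tree; hence every cycle must contain at least two long edges, and the bound $2\cdot 2\theta(t)+2\cdot(\pi-2\theta(t))=2\pi$ follows immediately with no enumeration. Your route instead tries to pin down the combinatorial type of the cycle. That pinning-down relies on the assertion that each $c_i$ and $d_j$ has valence \emph{exactly} two, with a unique partner in the opposite class; but the paper only says ``valence at least 2,'' and Figure~\ref{linkgraph1} in fact depicts $d$-vertices of higher valence. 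When a $d_j$ is adjacent to two different $c_i$'s, there are injective $4$-cycles such as $a-d_{j_1}-c_i-d_{j_2}-a$ or $b-c_{i_1}-d_j-c_{i_2}-b$ that avoid $a$ or $b$ entirely, so the claim ``any injective loop must contain both $a$ and $b$'' is false and your list of two cycle-types is incomplete. (Those omitted cycles still have length $\ge 2\pi$, but your enumeration does not certify this; the paper's tree argument does, uniformly.)

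On the analytic side, you are right to flag the inequality $\theta(t)\le\pi/2$ as a hidden ingredient: the paper uses $\pi-2\theta(t)$ as an edge length and only states $\theta(t)\ge\theta_c$, so nonnegativity of the short edges is implicit in the geometry of Figure~\ref{link1} rather than proved. However, the verification you propose does not quite close the gap. From $t\ge r\sin\theta_c$ and $\sin\theta(t)=(r/t)\sin\theta_c$ you only get $\sin\theta(t)\le 1$, which is vacuous, and the sine equation alone cannot distinguish $\theta(t)$ from $\pi-\theta(t)$. The branch selection needs to come from the geometric choice of the point $x$ in Definition~\ref{stabinterval} (the point at apex-angle $\alpha$ closest to the foot of $u$, for which $\theta(t)=\theta_c+\alpha$ and $\alpha\le\pi/2-\arcsin(t/r)\le\pi/2-\theta_c$), not merely from ``continuity of $\theta(t)$ in $t$,'' since for a fixed $t$ there can be two admissible values of $\alpha$. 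So the overall strategy is recognisable but both the combinatorial reduction and the trigonometric step need repair; the paper's tree argument avoids the first issue entirely.
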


\begin{proof}
 As the link is a bipartite graph, an injective loop contains an even number of edges. Since the graph has no double edge, the loop is made of at least four edges. Now since the subgraph with all but one of the edges of the form $[a, b]$ or $[c_i, d_j]$ removed is a tree, the loop must contains two edges of the form $[a, b]$ or $[c_i, d_j]$. As these edges have length $2\theta(t) \geq \frac{2\pi}{3}$ by lemma \ref{lemmetechnique1} and the remaining ones have length $\pi-2\theta(t) \leq \frac{\pi}{3}$, such a loop has length at least $2.2\theta(t) + 2(\pi-2\theta(t)) = 2\pi$.
\end{proof}

\noindent \textbf{Points in the Bass--Serre tree.} Let $v$ be a point in the Bass--Serre tree $T$. If $v$ is not a vertex of $T$, then a neighbourhood of $v$ in $\widehat{T}_\mathrm{slices}$ is given by choosing a small neighbourhood of $v$ in any cone containing it, hence such points have CAT(0) neighbourhoods. Now let $v$ be a vertex of $T$. Let $\Xi_v$ be the set of $\xi$ such that $C_\xi$ contains $v$ and let $\xi \in \Xi_v$. A polygonal neighbourhood of $v$ in $C_\xi $ is obtained as follows. Let $a_\xi, a_\xi'$ be the two edges of $T$ issuing from $v$ that are contained in $C_\xi$. Let $c_\xi$ be the radius $[O_\xi, v]$. Let $b_\xi$ (resp. $b_\xi'$) be the segment  of $C_\xi$ issuing from $v$ that makes an unoriented angle $ \theta_c$ with the radius $c_\xi$. We use these segments to define an arbitrarily small polygonal neighbourhood of $u$ as indicated in the following picture:

\begin{figure}[H]
\center
\scalebox{0.75}{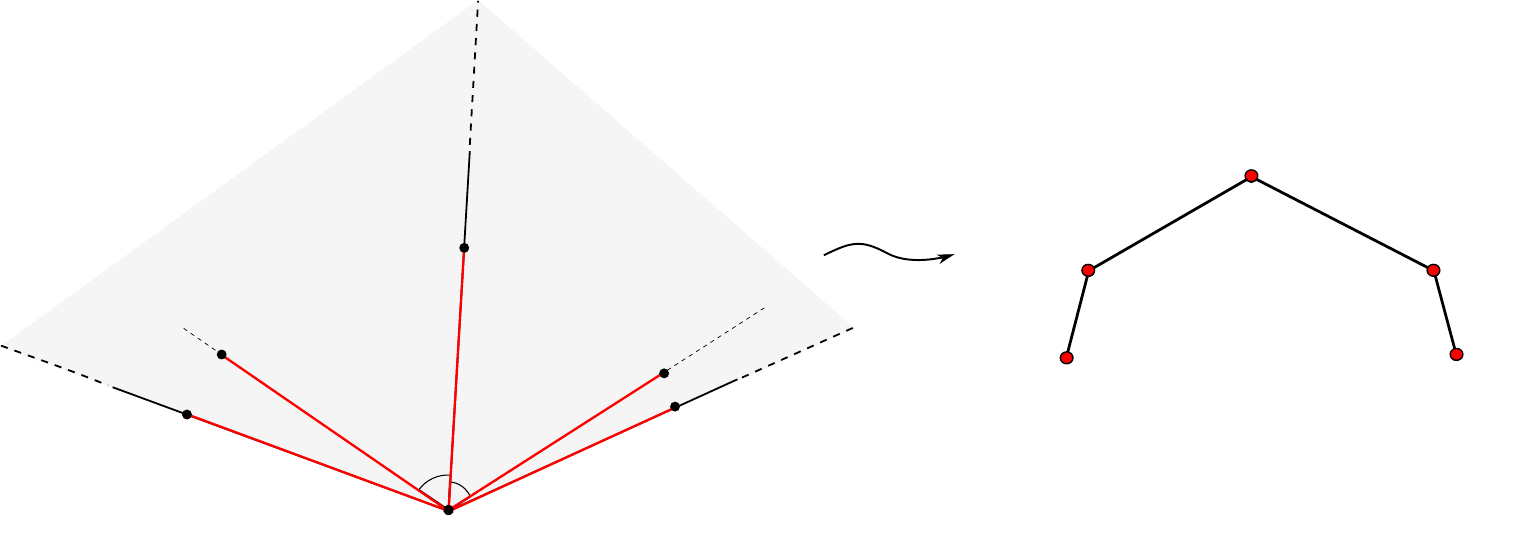}
\caption{The link $\mbox{lk}(v, C_\xi)$.}
\label{link2}
\end{figure}

We now look at how the links $\mbox{lk}(v, C_\xi)$ and $\mbox{lk}(v, C_{\xi'})$ are glued together. Let $\xi, \xi' \in \Xi_v$ and let us look at $v$ inside the cone $C_\xi$. If $v$ belongs to the interior of the slice $C_{\xi, \xi'}$, then the two links are identified in the obvious way. If $v$ belongs to the boundary of $C_{\xi, \xi'}$, then the two links are glued along a common edge as follows:

\begin{figure}[H]
\center
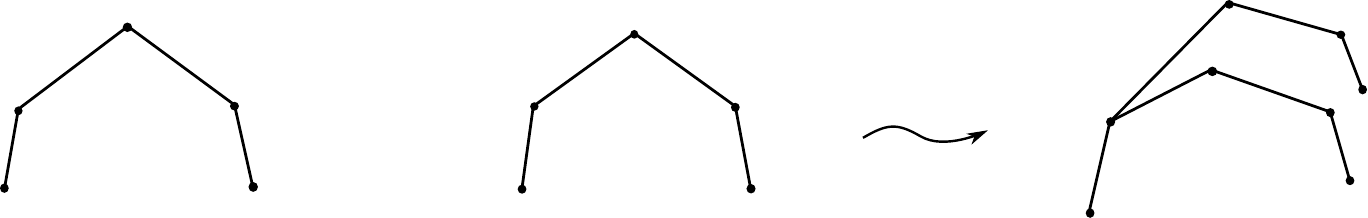
\caption{Some link identifications.}
\label{linkidentification2}
\end{figure}

Thus, the link $\mbox{lk}(v, \widehat{T}_\mathrm{slices})$ is a graph with no double edge or loop and which has three types of vertices: 
\begin{itemize}
 \item Vertices $a_1, a_2, \ldots$ (type A) corresponding to edges of $T$. These vertices are of valence $1$.
 \item Vertices $b_1, b_2, \ldots$ (type B) corresponding to segments $b_\xi, b_\xi'$, $\xi \in \Xi_v$. These vertices are of valence at least $2$.
 \item Vertices $c_1, c_2, \ldots$ (type C) corresponding to edges $c_\xi$, $\xi \in \Xi_v$. These vertices are of valence $2$. 
\end{itemize}
 Furthermore, $\mbox{lk}(v, \widehat{T}_\mathrm{slices})$ is a tripartite graph with respect to the partition of the set of its vertices into the aforementioned three types A, B and C. 

\begin{figure}[H]
\center
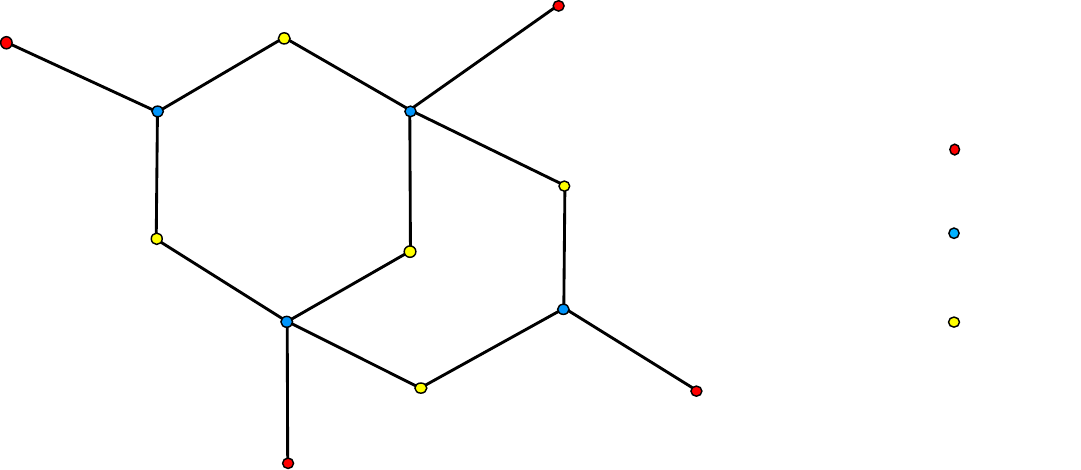
\caption{A portion of the link $\mbox{lk}(v, \widehat{T}_\mathrm{slices})$.}
\label{linkgraph2}
\end{figure}

\begin{lem}
 An injective loop in the link $\mbox{lk}(v,\widehat{T}_\mathrm{slices})$ has length at least $2\pi$.
\label{CAT(0)4}
\end{lem}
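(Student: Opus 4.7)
The plan is to exploit the tripartite structure of the link and the small valences of the type A and type C vertices to reduce the claim to showing that the subgraph spanned by the type B and type C vertices contains no 4-cycle.

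First I would observe that type A vertices have valence $1$ and thus cannot lie on any injective loop. Such a loop therefore lies entirely in the subgraph $\Lambda$ spanned by the type B and type C vertices. Since the full link is tripartite and the local model $\mbox{lk}(v, C_\xi)$ in each cone shows no direct edge between type A and type C vertices, every edge of $\Lambda$ is a type B--type C edge, and each such edge has length exactly $\theta_c$ (the angle between a boundary segment $b_\xi$ and the radius $c_\xi$).

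Second, each type C vertex in $\Lambda$ has valence exactly $2$, with both of its edges going to type B vertices. Hence an injective loop that enters a type C vertex must leave through its unique other edge, and therefore alternates between type B and type C vertices, visiting an equal number $k$ of each. Its total length is exactly $2k\theta_c$. Combined with Lemma \ref{lemmetechnique1}, which gives $\theta_c > \pi/3$, the lemma reduces to showing $k \geq 3$.

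The main step, and the main obstacle, is ruling out loops with $k = 2$. Such a loop would provide two distinct type C vertices $c_1, c_2$ sharing the same pair of type B neighbors $\{b_1, b_2\}$. To exclude this, I would carefully trace the slice-identification procedure at $v$ and verify that a type C vertex coming from a cone $C_\xi$ is uniquely determined by the unordered pair of $T$-edges of $A_\xi$ incident to $v$. Indeed, whenever $v$ is interior to $I_{\xi, \xi'}$---equivalently, when both $T$-edges of $A_\xi$ at $v$ coincide with those of $A_{\xi'}$---the slice identification merges the local links, collapsing the two radii to the same type C vertex. Conversely, when the two axes do not share both $T$-edges at $v$, the corresponding radii give distinct type C vertices whose pairs of type B neighbors differ, since type B vertices are parametrised by $T$-edges at $v$. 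Thus the assignment (type C vertex) $\mapsto$ (unordered pair of type B neighbors) is injective, precluding a 4-cycle.

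Once $k \geq 3$ is established, the total length of any injective loop is at least $6\theta_c > 2\pi$, as required.
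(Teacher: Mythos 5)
Your proof is correct and follows essentially the same strategy as the paper's: reduce to loops avoiding type~A vertices, observe all remaining edges have length $\theta_c > \pi/3$, rule out a $4$-cycle, and conclude with length $\geq 6\theta_c > 2\pi$. The one place you diverge slightly is the treatment of the $4$-cycle: the paper argues that a $4$-cycle would lie inside the image of $\mathrm{lk}(v, C_\xi) \cup \mathrm{lk}(v, C_{\xi'})$ for two cones, and then appeals to the case analysis in Figure~\ref{linkidentification2} to see that this image is a tree; you instead make explicit the underlying combinatorial mechanism, namely that type~B vertices are in bijection with the $T$-edges at $v$ and that a type~C vertex $[c_\xi]$ is determined by the unordered pair of $T$-edges of $A_\xi$ at $v$, so the assignment from type~C vertices to their pairs of type~B neighbors is injective. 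This is arguably a cleaner formulation: it handles identifications propagated through chains of cones without needing to restrict attention to the two cones $C_\xi$, $C_{\xi'}$ directly, whereas the paper's reference to the figure leaves that point implicit. Both routes are sound and yield the same bound.
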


\begin{proof}
 Let $\gamma$ be an injective loop in $\mbox{lk}(v,\widehat{T}_\mathrm{slices})$. Since type A vertices have valence $1$, $\gamma$ only meets type B and type C vertices. Moreover, $\gamma$ is a bipartite graph for the induced colouring, hence it has an even number of edges. As there is no double edge, $\gamma$ has at least four edges.

We prove by contradiction that it cannot contain exactly four edges. Indeed, $\gamma$ would then contain two type C vertices corresponding to edges $c_\xi, c_{\xi'}$ ($\xi, \xi' \in \Xi_v$) and the remaining two vertices would thus correspond to the associated edges $b_\xi, b_{\xi}', b_{\xi'}, b_{\xi'}'$ after identification. Consequently, $\gamma$ would be contained in the image of $\mbox{lk}(v, C_\xi) \cup \mbox{lk}(v, C_{\xi'})$ after identification, but the above discussion shows that this image does not contain an injective cycle (see Figure \ref{linkidentification2}).

Thus, $\gamma$ contains at least six edges, all of whose being between a type B vertex and a type C vertex. As the length of such an edge is $\theta_c > \frac{\pi}{3}$ by Lemma \ref{lemmetechnique1}, the length of $\gamma$ is at least $6\theta_c >2\pi$.
\end{proof}

\begin{cor} The modified coned-off space $\widehat{T}_\mathrm{slices}$ is CAT(0). \qed
\end{cor}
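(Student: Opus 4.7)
The plan is to invoke the standard local-to-global principle for CAT(0) polyhedral complexes. A piecewise-Euclidean complex is CAT(0) iff it is simply connected and locally CAT(0), and in dimension two the local CAT(0) condition reduces, by Gromov's link criterion \cite[Theorem II.5.5]{BridsonHaefliger}, to the assertion that every injective loop in the link of every point has length at least $2\pi$. So the strategy splits into two pieces: verify simple connectivity of $\widehat{T}_\mathrm{slices}$, then check the link condition at every point.

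For simple connectivity, I would argue that $\widehat{T}_\mathrm{slices}$ deformation retracts onto the Bass--Serre tree $T$. Each cone $C_\xi$ radially retracts to its base $A_\xi \subset T$, and although different cones are glued to each other via slice identifications, these identifications happen along subsets that lie in the cones themselves and are sent, under the radial retraction, onto the segments $I_{\xi,\xi'} \subset T$, consistently between the two cones involved. So the radial retractions fit together into a global deformation retraction $\widehat{T}_\mathrm{slices} \to T$, and since $T$ is contractible, so is $\widehat{T}_\mathrm{slices}$.

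For the link condition, I would carry out an exhaustive case analysis organized by the three kinds of points in $\widehat{T}_\mathrm{slices}$: apices of cones, points in the interior of some cone, and points lying in $T$. If $u$ is the apex of a cone $C_\xi$, then its link in $\widehat{T}_\mathrm{slices}$ coincides with its link in $C_\xi$, which is simplicially isomorphic to a bi-infinite line (of total angular length $\frac{2\pi}{R_\mathrm{min}}\cdot\infty$), and so contains no injective loop at all. If $u$ lies in the interior of a cone but is not an apex, then either $u$ belongs to no slice $C_{\xi,\xi'}$, in which case a sufficiently small neighborhood of $u$ is isometric to a neighborhood in a flat cone (trivially locally CAT(0)), or else $u$ lies in one or more slices, and the link condition is exactly Lemma \ref{CAT(0)3}. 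If $u$ lies in $T$, either $u$ is interior to an edge (and the local picture is a wedge of flat half-disks around a geodesic, which is locally CAT(0)), or $u$ is a vertex of $T$, which is the case handled by Lemma \ref{CAT(0)4}.

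Concatenating these observations exhausts all points of $\widehat{T}_\mathrm{slices}$ and yields the Gromov link condition at each of them; combined with simple connectivity, the Cartan--Hadamard theorem for piecewise-Euclidean complexes gives that $\widehat{T}_\mathrm{slices}$ is CAT(0). I do not expect any serious obstacle here, since the substantive geometric work — controlling how slice identifications combine the local pictures into bipartite or tripartite graphs and then bounding the girth using the critical angle $\theta_c > \pi/3$ provided by the $C''(1/6)$ condition (Lemma \ref{lemmetechnique1}) — has already been accomplished in Lemmas \ref{CAT(0)3} and \ref{CAT(0)4}; the corollary is the clean synthesis of these inputs.
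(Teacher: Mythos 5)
Your proof is correct and follows essentially the same route as the paper: establish contractibility of $\widehat{T}_\mathrm{slices}$ by retraction onto the tree $T$, then verify Gromov's link condition pointwise, delegating the two non-trivial cases to Lemmas~\ref{CAT(0)3} and~\ref{CAT(0)4}. One small inaccuracy worth noting: near a point interior to an edge of $T$, the slice identifications actually merge every cone containing that point into a \emph{single} flat half-disk (this is the paper's observation that a neighbourhood is isometric to one taken in any cone containing the point), rather than leaving a wedge of half-disks — though a wedge of flat half-disks glued along their common diameter is also locally CAT(0), so your conclusion is unaffected.
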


\bibliographystyle{plain}
\bibliography{PetiteSimplification}

\end{document}